\theoremstyle{plain}\newtheorem{proposition}{Proposition}[section]
\theoremstyle{plain}\newtheorem{theorem}{Theorem}[section]
\theoremstyle{plain}\newtheorem{lemma}{Lemma}[section]
\theoremstyle{plain}
\theoremstyle{definition}
\theoremstyle{definition}\newtheorem{remark}{Remark}[section]
\theoremstyle{definition}\newtheorem{assumption}{Assumption}[section]
\renewcommand{\geq}{\geqslant}
\renewcommand{\leq}{\leqslant}
\newcommand{\ds}{\displaystyle} 
\newcommand{\be}{\begin{equation}}
\newcommand{\ee}{\end{equation}}
\newcommand{\f}{F}
\newcommand{\fbar}{\overline{\f}}
\newcommand{\fbarN}{\fbar^N}
\newcommand{\fbarNt}{\fbarN_t}
\newcommand{\tildefbarNt}{\widetilde{\f}^N_t}
\newcommand{\fbarmu}{\overline{\mathscr{\f}}^\mu}      
\newcommand{\fbarnu}{\overline{\mathscr{\f}}^\nu} 
\newcommand{\fbarnut}{\fbarnu(t)} 
\newcommand{\fbarmut}{\fbarmu_t}  
\newcommand{\muN}{\mu^N_t}
\newcommand{\tildemuN}{\widetilde{\mu}^N}
\newcommand{\bx}{\mathbf x}
\newcommand{\nd}{\textnormal{d}}
\newcommand{\dt}{\nd t}
\newcommand{\bxi}{\bx^j}
\newcommand{\bxit}{\bxi_t}
\newcommand{\bxiit}{\bx^i_t}
\newcommand{\tildebxit}{\widetilde{\bx}^j_t}
\newcommand{\tildebxiit}{\widetilde{\bx}^i_t}
\newcommand{\mi}{m^j}
\newcommand{\mitt}{\mi_t}
\newcommand{\miit}{m^i_t}
\newcommand{\tildemit}{\widetilde{m}^j_t}
\newcommand{\tildemiit}{\widetilde{m}^i_t}
\newcommand{\dmit}{\nd\mitt}
\newcommand{\dx}{\nd \bx}
\newcommand{\dxit}{\dx^j_t}
\newcommand{\diff}{\sigma}
\newcommand{\diffit}{\diff^j_t}
\newcommand{\dW}{\nd W}
\newcommand{\Wit}{W^j_t}
\newcommand{\dWit}{\dW^j_t}
\newcommand{\ddt}{\frac{\nd}{\dt}}
\newcommand{\Nhood}{{\mathcal N}}
\DeclareMathOperator*{\argmin}{\textnormal{argmin}}
\newcommand{\vel}{{\sf M}}
\newcommand{\salpha}{\lambda} 
\newcommand{\SGD}{optimization with adjusted annealing rate} 
\newcommand{\SGDc}{optimization with adjusted annealing rate, }
\newcommand{\gammab}{\sigma} 
\newcommand{\betab}{\alpha} 
\newcommand{\alphalam}{\lambda}  
\newcommand{\betamu}{\beta}  
\newcommand{\nut}{\nu(t)}   
\newcommand{\nutx}{\nu(t,\bx)}   
\newcommand{\nuNtx}{\nu^N(t,\bx)}   
\newcommand{\mut}{\mu_t}   
\newcommand{\muat}[1]{\mu_{#1}}   
\newcommand{\muxmat}[1]{\mu_{#1}(\bx,m)}
\newcommand{\mutxm}{\mu_t(\bx,m)} 
\newcommand{\muNtxm}{\mu^N_t(\bx,m)} 
\newcommand{\dmutxm}{\hspace*{0.02cm}\nd\hspace*{-0.03cm}\mutxm} 
\newcommand{\dmuNtxm}{\hspace*{0.02cm}\nd\hspace*{-0.03cm}\muNtxm} 
\newcommand{\dmutm}{\hspace*{0.02cm}\nd\hspace*{-0.03cm}\mu_t(\cdot,m)} 
\newcommand{\dmuxmat}[1]{\hspace*{0.02cm}\nd\hspace*{-0.03cm}\mu_{#1}(\bx,m)}
\newcommand{\fbarmuat}[1]{\fbarmu_{#1}}  
\newcommand{\tildemuNt}{\tildemuN_t} 
\numberwithin{equation}{section}
\title[Swarm-based optimization\\meets simulated annealing]{Swarm-based gradient descent\\meets simulated annealing}
\author{Zhiyan Ding}
\address{Department of Mathematics, University of California, Berkeley}
\email{zding.m@berkeley.edu}
\author{Martin Guerra}
\address{Department of Mathematics, University of Wisconsin-Madison}
\email{mguerra4@wisc.edu}
\author{Qin Li}
\address{Department of Mathematics, University of Wisconsin-Madison}
\email{qinli@math.wisc.edu}
\author{Eitan Tadmor}
\address{Department of Mathematics and Institute for Physical Science \& Technology\newline \hspace*{0.3cm}   University of Maryland, College Park}
\email{tadmor@umd.edu}
\date{\today}
\subjclass{90C26,65K10,92D25}
\keywords{Optimization, stochastic gradient descent, swarm-based gradient descent, provisional minimum, simulated annealing.}
\thanks{\textbf{Acknowledgments.} Research  was supported by NSF grant DMS-2012292 (MG), by NSF grant DMS-2308440 and ONR grant N00014-21-1-2140 (QL) and by ONR grants N00014-2112773, N00014-2412659 (ET).  ET  thanks the Fondations Sciences Math\'{e}matiques des Paris  (FSMP) and LJLL at the Sorbonne University for the support and hospitality.}
\begin{document}

\newpage
\maketitle
\begin{abstract}
We introduce a novel method, called Swarm-based Simulated Annealing (SSA), for non-convex optimization which is at the interface between the swarm-based gradient-descent (SBGD) \cite{eitan_2022,eitan_2023}, and Simulated Annealing (SA) \cite{vcerny1985thermodynamical,Kirkpatrick_1983,geman1986diffusions}. Similar to SBGD, we introduce a swarm of agents, each identified with a position, $\bx$ \emph{and} mass $m$, to explore the ambient space. Similar to SA, the agents proceed in the gradient descent direction, and are subject to Brownian motion. The annealing rate, however, is dictated by a decreasing function of their mass. As a consequence, instead of the SA protocol for time-decreasing temperature, here the swarm decides how to `cool down' agents, depending on their own accumulated mass. The dynamics of masses is coupled with the dynamics of positions: agents at higher ground transfer (part of) their mass to those at lower ground. Consequently, the resulting SSA optimizer is dynamically divided between heavier, cooler agents viewed as `leaders' and lighter, warmer agents viewed as `explorers'. Mean-field convergence analysis and benchmark optimizations demonstrate the effectiveness of the SSA method as a multi-dimensional global optimizer.
\end{abstract}
\setcounter{tocdepth}{1}
\tableofcontents

\section{Introduction}\label{sec:intro}
We introduce a new swarm-based  optimization method to compute the global minimum of  non-convex objective functions, $\ds \bx_*:=\argmin_{\Omega \subset \mathbb{R}^d} \f(\bx)$. The swarm consists of $N$ agents --- enumerated $j=1,2,\ldots, N$, each of which is identified with a time-dependent position, $\bxit \in {\mathbb R}^d$, and time-dependent mass, $\mitt \in {\mathbb R}_+$. The positions are governed by overdamped Langevin process\footnote{\label{foot:notation}
A word about notation. Throughout the paper the time-dependent quantities  in deterministic models are denoted $\square(t,\cdots)$ while time-dependent quantities which arise from stochastic  descriptions are denoted $\square_t(\cdots)$.},
\[
\dxit= -\nabla\f\left(\bxit\right)\dt +\sqrt{2\diffit}\dWit, \qquad j=1,2,\ldots, N,
\]
where $\left\{W^j_t\right\}^N_{j=1}$ are independent Brownian motions with    amplitude $\sqrt{2\diffit}$, where $\diffit$ is the scaled ``temperature'' (dating back to Langevin \cite{lemons1997paul}), or  is referred to as  the \emph{annealing-rate}, \cite{Kirkpatrick_1983,vcerny1985thermodynamical,gidas1985global,geman1986diffusions,CHS1987}. 
 The intricate aspect of the dynamics is a proper tuning of these annealing-rates  $\diffit$. This is where the masses, $\{\mitt\}$, come into play: they are driven by
\[ 
\dmit=-\mitt\left(\f\left(\bxit\right)-\fbarNt\right)\dt, \qquad j=1,2,\ldots,N,
\]
where $\fbarNt$ is the mass-weighted average of the $N$ agents, $\ds \fbarNt:=\frac{\sum_{j=1}^N\mitt\f(\bxit)}{\sum_{j=1}^N \mitt}$. 
We tune the annealing rates  as a decreasing function of the mass, $\diffit= \sigma(\mitt)$.
Thus, our protocol for annealing-rate prevents lighter agents from being trapped in basins of \emph{local} minima while `cools down' heavier agents at the basin of attraction of \emph{global} minimum of $\f$. This is where the Swarm-Based Gradient Descent (SBGD) \cite{eitan_2022} meets Simulated Annealing (SA), \cite{Kirkpatrick_1983,geman1984stochastic,vcerny1985thermodynamical}, except that the protocol for ``cooling process'' is different: instead of an explicit recipe for decreasing the temperature as is done in SA, our swarm-based approach lets different agents at a lower ground cool down, depending on their increasing mass $\diffit= \sigma(\mitt)$. 

\noindent
{\bf Why provisional minimum?} We refer to the mass-weighted average
\begin{equation}\label{eq:provisional_min}
\fbarNt:=\frac{\sum_{j=1}^N\mitt\f(\bxit)}{\sum_{j=1}^N \mitt}
\end{equation}
as the \emph{provisional minimum}. To clarify why provisional minimum is needed, we note  that according to the mass equation above, agents that are above the provisional minimum will shed a fraction of their mass --- masses that are transferred to agents that are below the provisional minimum. Since the total mass remains constant, $\sum_j \dmit/\dt=0$, masses tend to concentrate with agents near or below the provisional minimum, and it therefore makes sense to adjust the annealing rate, $\diffit$, as a \emph{decreasing} function of the mass,
\[
\diffit=\gammab(\mitt): (0,\infty) \mapsto  {\mathbb R}_+, \qquad j=1,2,\ldots, N.
\]
Given this adjustment of mass-dependent annealing, \emph{it is expected} that  for a large crowd of agents, $N\gg1$, \eqref{eq:provisional_min} will approach the global minimum,
\[
\lim_{N\rightarrow \infty}\fbarNt \stackrel{t\rightarrow \infty}{\longrightarrow} \f_*:=\f(\bx_*).
\]
Indeed, our main result states that, under the appropriate assumptions (in particular, the assumption of uniqueness of global minimizer made in Assumption~\ref{assumption: assmptn1} below), this is true.

The method described above lies at the interface of the deterministic SBGD approach and the stochastic-based SA. 
We recall that the SBGD is governed by a deterministic system which governs the swarm of $N$-agents, and for $j=1,2,\ldots, N$
\be\tag{SBGD}\label{eq:SBGD}
\ddt\bx^j(t)=-\betab(m^j(t))\nabla \f(\bx^j(t)),\quad
\ddt m^j(t) =-\big(\f(\bx^j(t))-\fbar^N(t)\big)m^j(t)\,.
\ee
The key feature  advocated in \eqref{eq:SBGD} is swarm dynamics of both positions and masses embedded in ${\mathbb R}^d\times {\mathbb R}_+$. The additional dimension of mass serves as a platform for \emph{communication among agents}, encoded in the provisional minimum, which in \cite{eitan_2022} was taken as the minimum of the crowd at the given time, $\fbar^N(t)=\min_j \f(\bx^j(t))$. 
The same swarm-based methodology is used in this work, except for two distinct features. First, the provisional minimum is given here by~\eqref{eq:provisional_min} which eventually is expected to approach the global minimum, $\min_j \f(\bx^j(t))$, used in \cite{eitan_2022}. The second and more essential distinction, is how  masses are being used to adjust the dynamics of positions along the gradient descent: in SBGD, it is the \emph{time step} which is adjusted as a decreasing function of the mass, $\betab=\betab(m^j(t))$. In our new proposed algorithm, masses are used to adjust the annealing rate, $\gammab=\gammab(\mitt)$, as a decreasing function of mass. This new algorithm is named Swarm-based Simulated Annealing (SSA) optimizer:
\be\tag{SSA}\label{eq:SSA}
\left\{\begin{split}
\dxit&=-\nabla \f(\bxit)\dt+\sqrt{2\gammab(\mitt)}\dWit,\\
\dmit& =-\big(\f(\bxit)-\fbarNt\big)\mitt\dt,
\end{split}\right.\qquad j=1,2,\ldots, N.
\ee
\begin{remark}[{\bf On the choice of provisional minimum}]  The SBGD method \eqref{eq:SBGD} employs the actual minimum, $\fbar(t)=\min_j \f(\bx^j(t))$ as its provisional minimum \cite{eitan_2022, eitan_2023}. This, however, is not amenable to the mean-field limit analysis that we will pursue in Section \ref{sec:mean_field_analysis}. The mean-field limit requires us to tour the whole landscape, ending with
\[
\lim_{N\rightarrow \infty} \fbar^N(t)=\lim_{N\rightarrow \infty} \min_{j=1,\ldots, N} \f(\bx^j(t)),
\]
which is already the global minimum we are looking for. Instead, the use of mass-weighted average  $\fbarNt$ in \eqref{eq:SSA} as a provisional minimum, admits a mean-field interpretation which is shown to be driven towards the same desired global minimum.   
 \end{remark}
 
\subsection{In-swarm communication combined with stochastic search}
Application of deterministic swarm-based methods reveals that such methods \emph{succeed in the accuracy sense but fail in the probability sense}, --- namely when these methods succeed, then they find the global optimizer, yet there is non-zero probability for the methods to fail to do so.
In contrast, stochastic methods \emph{ succeed in the probability sense but fail in the accuracy sense}, --- namely such methods can always find the global basin, but they do not necessarily return a true global optimizer.
In this context, our swarm-based \SGD\,  takes advantage of randomness in \eqref{eq:SSA}${}_1$ combined with communication-based swarming in \eqref{eq:SSA}${}_2$,  with the aim of succeeding in both the probability and the accuracy sense. This is achieved by a dynamic process that combines swarm communication and randomness, as described below.

The essential role of communication in SBGD was already emphasized in \cite{eitan_2022,eitan_2023}. This is amplified in the present context, upon setting $\gammab(\cdot)\equiv 0$ in \eqref{eq:SSA}${}_1$. One ends up with a crowd of $N$ independent agents driven by gradient descent, 
\[
\dot{\bx}^j(t)=-\nabla \f(\bx^j(t)), \qquad j=1,2,\ldots, N,.
\]
The success of such a non-communicating crowd in exploring the ambient landscape is \emph{significantly worse} than that of a communication-based crowd. In Section~\ref{sec:other_methods}, we show that the method fails in the probability sense.
 
Communication is encoded in the provisional minimum, where agents `communicate' their height relative to $\fbarNt$. In our swarm-based \SGD\, \eqref{eq:SSA}, $\fbarNt$ is taken as the weighted average height, weighted by the different masses. Consequently, there is a dynamic distinction between agents above the provisional minimum and agents below or at the level of the provisional minimum. Agents above the provisional minimum carry  lighter mass and therefore explore the ambient landscape with a relatively large Brownian motion; these are viewed as the \emph{explorers} of the crowd. The heavier agents --- those that are below or at the level of the provisional minimum, evolve in the gradient direction with a smaller Brownian motion and are viewed as \emph{leaders} of the crowd. Of course, once an exploring light agent `hits' a new lower ground at, say, $\bxit$ where $\f(\bxit) \ll -\fbarNt$, it is expected to accumulate  more mass from the higher agents, to cool down, and eventually to become a leader, driving the crowd to a new lower point.

We note that by fixing a constant annealing rate, \eqref{eq:SSA}${}_1$ with $ \gammab(\mitt)\equiv \sigma>0$, we also encounter a problem. In this case, we recover Langevin Monte Carlo  
\[
\dxit= -\nabla\f(\bxit)\dt +\sqrt{2\sigma}\dWit, \qquad j=1,2,\ldots, N,
\]
whose long time invariant measure suggests $\bx \propto e^{-\f(\bx)/\sigma}$. It admits the proper convergence in the zero-variance limit, $\bx \stackrel{\sigma\rightarrow 0}{\longrightarrow} \argmin_\bx \f(\bx)$, but, as we show in Section~\ref{sec:other_methods} below, the resulting method fails in the accuracy sense for any finite $\sigma$.
Thus, the dynamic process of relabeling explorers and leaders, coupled through a mass-dependent annealing rate, $\diffit=\gammab(\mitt)$, is at the heart of the matter.

In Section \ref{sec:proofs_main_results} we prove global convergence of~\eqref{eq:SSA} in both, the sense of probability and the sense of accuracy. This is achieved in a two-step argument: first, we conduct a large-crowd, mean-field analysis summarized in Theorem \ref{thm:mean_field_thm} below, which  allows us to pass from  finitely many Stochastic-ODEs governing the particle system \eqref{eq:SSA}, to the limiting dynamics for the probability measure, $\mutxm$, governed by the limiting PDE \eqref{eqs:mean_field}; and second, the large-time behavior of the latter, summarized in Theorem \ref{thm:long_time_thm}, implies that in this mean-field limit, the PDE in long time returns the the provisional minimum as the global minimum. Combining these results we conclude in Theorem \ref{thm:rigorous} the claimed convergence $\displaystyle \mathop{\liminf}_{t\rightarrow \infty}\lim_{N\rightarrow \infty}\fbarNt= \f_*$ with a precise convergence rate of polynomial type. Further, in Section \ref{sec:macro} we derive the corresponding macroscopic description of the system.

\subsection{The swarm-based \SGD} The exchange of mass encoded in  \eqref{eq:SSA} assigns smaller masses to agents with higher values, while the decreasing property of $\gammab(m)$ ensures that these agents experience larger random perturbations. As a result, lighter agents serve as explorers, while  heavier agents serve as leaders. As noted in \cite{eitan_2022}, upon normalization of the total mass, $\sum_j \mitt\equiv \sum_j m^j_0=1$, masses can be interpreted as `probabilities of finding a global minimum'; heavier agents have larger probability in doing so. The computation proceeds by discretization of \eqref{eq:SSA}  using Euler-Maruyama's formula with a proper time step $h>0$,
\begin{equation}
    \bx_{n+1}^{j} = \bx_{n}^{j} - h\nabla \f(\bx_{n}^{j}) + \sqrt{2h\gammab\left(m_{n}^{j}\right)}\boldsymbol{\xi}^{j}
\end{equation}
for $j=1,2,...,N$ and $\boldsymbol{\xi}^{j} \sim \mathcal{N}(0,\mathbf{I}_{d})$ i.i.d for all $j$. The detailed algorithm is summarized in Algorithm \ref{algo:SSA}. The mass is conserved throughout the algorithm. According to Line 6, the update gives: $
\sum_{j=1}^N\mi_{n+1}=\sum_{j=1}^N\mi_{n}-h \sum_{j=1}^N\mi_{n}\left(\f(\bxi_n)-\fbar_n\right)=\sum_{j=1}^N\mi_{n}$, where the second summation term becomes zero owing to the definition of $\fbar_n$ shown in Line 3 in the algorithm.

The performance of the algorithm is presented in Section~\ref{sec:numerical_exp}. This follows the analytical study in Sections \ref{sec:main} and \ref{sec:proofs_main_results} in which we state, and respectively, prove the main analytical results of the paper, regarding the large-crowd, large-time dynamics associated with \eqref{eq:SSA}. 

\begin{algorithm}[H]
    \caption{Swarm-based Simulated Annealing (SSA)}\label{algo:SSA}
    \begin{algorithmic}[1]
    \Input: Initial distribution $\mu_0(x,m)$, $h$ as step size, and $n_T$ as $\#$ of iterations
    \EndInput
    \State $\{(\bx_{0}^{j},m_0^j)\}_{j=1}^{N}$ are i.i.d. drawn from $\mu_{0}$  for $j=1,2,...,N$
    \hspace{0.in} 
        \State $\ds \fbar_{0} \gets \frac{\sum_{j=1}^{N}m_{0}^{j}\f(\bx_{0}^{j})}{\sum^N_{j=1}m^j_0}$
    \hspace{0.2in} \% $\ds \fbar_n\equiv \fbar_n^N=\frac{\sum^N_{j=1} m^j_n\f(\bx^j_n)}{\sum^N_{j=1}m^j_n}$  the provisional minimum at $t^n$
    \For {$n = 0,1,2,...,n_T-1$}
    \For {$j=1,2,...,N$}
     \State $m_{n+1}^{j} \gets m_{n}^{j} - m_{n}^{j}h\big(\f(\bx_{n}^{j}) - \fbar_{n}\big)$ for $j=1,2,...,N$
       \EndFor
       \State Generate $N$ samples i.i.d such that $\boldsymbol{\xi}^{j}\sim \mathcal{N}(0,\mathbf{I}_{d})$
    \State $\bx_{n+1}^{j} \gets  \bx_{n}^{j} - h\nabla \f(\bx_{n}^{j}) + \sqrt{2h\gammab(m_{n}^{j})}\boldsymbol{\xi}^{j}$  for $j=1,2,...,N$
    \State $\ds \fbar_{n+1} \gets \sum_{j=1}^{N}m_{n+1}^{j}\f(\bx_{n+1}^{j})/\sum_{j=1}^{N}m_{n+1}^{j}$
    \EndFor
    \State $j_{\text{opt}} \gets \argmin_j \f(\bx^j_{n+1})$
    \Output : $\bx_{n_T}^{j_{\text{opt}}}$ and $\fbar_{n_T}$
    \EndOutput
    \end{algorithmic}
\end{algorithm}

\subsection{Related work}\label{sec:rw}
There are several well-known non-convex optimization algorithms which combine swarm-based and/or stochastic effects. We mention biologically inspired methods of ant colony optimization \cite{mohan2012survey},
artificial bee colony optimization \cite{karaboga2014comprehensive},  and firefly
optimization \cite{yang2009firefly}, and physically inspired methods of wind-driven optimization, \cite{bayraktar2013wind}.
We also mention particle swarm optimization methods which explore the state space with randomized drifts toward the best global position, \cite{kennedy1995particle,poli2007particle} and its stochastic version in \cite{Grassi_2023}.

 The stochastic part of our method is motivated by simulated annealing driven by stochastic noise that is `cooled down' as time evolves, \cite{Henderson2003handbook,monmarche2018hypocoercivity}.
The key feature in one-particle SA dynamics is the protocol for `cooling down', i.e., setting a properly tuned decreasing-in-time annealing rate $ \gammab_t \propto c/\sqrt{\log{t}}$, \cite{gidas1985nonstationary,gidas1985global,geman1986diffusions,Laarhoven_1987_1,CHS1987}. This should be compared with our swarm-based approach, in which the annealing rate of each agent is dictated as a decreasing function of its mass\footnote{in fact -- we allow a larger class of mass scaling, covered  in Assumptions \ref{assumption: assmptn2} and  \ref{assumption: assmptn1}.}, $\gammab_t \mapsto \gammab(\mitt)$; essentially, we let the swarm decide how to `cool down' the agents as they accumulate more mass.

The idea of gaining from the stochastic interaction of more than just one agent can be found in the more recent work \cite{chen2019accelerating}. Here Langevin diffusion swaps between two agents  --- a ``global explorer” and a ``local explorer”, with two annealing rates which correspond to high and low temperatures. The dynamics takes place in ${\mathbb R}^d\times {\mathbb R}^d$. In our method, there is a crowd of $N$ agents which  evolve their mass-dependent annealing rates in $\big(\mathbb{R}^d\times \mathbb{R}_+\big)^{\otimes N}$.

 In the present context, it is instructive to calibrate our method with the Consensus Based Optimization (CBO), \cite{pinnau2017consensus,carrillo2018analytical, totzeck2021trends, Carrillo_2021,Carrillo_2023, Riedl_2022}. The CBO is governed by a swarm-based dynamics of SDEs
\be\label{eq:CBO}
\nd \bxit = -\lambda\left(\bxit - \overline{\f}^\salpha_t\right)\dt + \sigma\left|\bxit - \overline{\f}^\salpha_t\right|\dWit , \qquad
\overline{\f}^\salpha_t :=\sum_{j=1}^N \bxit \left(\frac{\exp(-\salpha \f(\bxit))}{\sum_{i=1}^N \exp(-\salpha \f(\bx^i_t))}\right),
\ee
which are steering towards the weighted minimum $\overline{\f}^\salpha_t$. Indeed,  the weighted min $\overline{\f}^\salpha_t$ can be viewed as the provisional minimum driving the CBO \eqref{eq:CBO}, in view of the Laplace principle, 
\[
\lim_{\salpha\rightarrow \infty}\left(-\frac{1}{\salpha}\log\left(\int \omega^\salpha_\f(\bx)\nd \rho\right)\right) = \min_{\textnormal{supp}(\rho)}\f, \qquad \omega^\salpha_\f(\bx):= \exp(-\salpha \f(\bx)).
\]
It is here that we observe the main difference between our swarm-based \SGD\, \eqref{eq:SSA} and the CBO \eqref{eq:CBO}: while the latter requires a user tuning of parameter $\salpha$  in $\overline{\f}^\salpha$,  the swarm-based framework of \cite{eitan_2022} provides us with an \emph{adaptive} mechanism which dynamically adjusts the provisional minimum based on the mass distribution in  different parts of the landscape, $\fbarNt$, and the corresponding mean-field $\fbarmut$, outlined in \eqref{eqs:mean_field} below.

\section{Statement of main results}\label{sec:main}
\subsection{From empirical distribution to mean-field}\label{sec:mean_field_analysis}
We define the \emph{empirical distribution} $\muN$, which records the \emph{ensemble} distribution of $\{\bxit,\mitt\}_{j=1}^N$ that obey \eqref{eq:SSA}, as a measure on the feature space $(\bx,m)$,
\begin{equation}\label{eqn:dist_stochastic}
    \muN(\bx,m) = \frac{1}{N}\sum_{j=1}^{N}\delta_{\bxit}(\bx)\otimes \delta_{\mitt}(m)\,.
\end{equation}
 Recall that the provisional minimum is given by the weighted average in \eqref{eq:provisional_min}, which is expressed as 
\begin{equation}\label{eqn:ave_stochastic}
    \fbarNt = \frac{\ds \sum_{j=1}^N \mitt \f(\bxit)}{\ds \sum_{j=1}^N \mitt} = \frac{\ds \iint m\f(\bx)\dmuNtxm}{\ds \iint m\dmuNtxm}.
\end{equation}
To formally derive the mean-field limit, we use a smooth test function $\phi(x,m)$ that is compactly supported on $\mathbb{R}^d\times \mathbb{R}$. By definition,
\[
 \iint\phi(\bx,m)\muN(\bx,m)\nd x\nd m = \frac{1}{N}\sum^N_{j=1}\phi(\bxit,\mitt)\,.
\]
Differentiating, recalling It\^o's formula, and using \eqref{eq:SSA} yield 
\begin{align*}
\frac{1}{N}\sum_{j=1}^{N}& \mathbb{E}\left[\nd\phi(\bxit,\mitt)\right] \\
 & = \frac{1}{N}\sum_{j=1}^N\mathbb{E}\left[\nabla_\bx\phi(\bxit,\mitt)\cdot\nd \bxit +\frac{1}{2}\langle D^2_\bx\phi , 2\gammab(\mitt)\rangle_{\text{Tr}}\dt + \partial_m\phi(\bxit,\mitt)\nd \mitt\right]\\
&  = \frac{1}{N}\sum_{j=1}^N \mathbb{E}\underbrace{\left[\nabla_\bx \phi \cdot \big(-\nabla \f(\bxit)\dt + \sqrt{2\gammab(\mitt)}\dWit\big)\right]}_{\text{Term I}}   + \frac{1}{N}\sum_{j=1}^N\mathbb{E}\underbrace{\left[\langle D^2_\bx\phi , \gammab(\mitt)\rangle_{\text{Tr}}\dt\right]}_{\text{Term II}}
\\
  & \quad +  \frac{1}{N}\sum_{j=1}^N \mathbb{E}\underbrace{\left[-\partial_{m}\phi(\bxit,\mitt)(\f(\bxit)-\fbarNt)\mitt\dt\right]}_{\text{Term III}},
\end{align*}
where $D^2_\bx\phi$ is the Hessian. 
The first term, $\mathrm{I}=\mathrm{I}_1+\mathrm{I}_2$, where 
\begin{equation*}
\begin{split}
\mathrm{I}_1 &:= \frac{1}{N}\sum_{j=1}^{N}\mathbb{E}\left[\nabla_\bx \phi(\bxit,\mitt) \cdot -\nabla \f(\bxit)\right] = -\iint \nabla_\bx\phi(\bx,m) \cdot \nabla \f(\bx)\muN(\bx,m)\dx \nd m\\
& \ = \iint \phi(\bx,m) \nabla_\bx\cdot (\muN\nabla \f(\bx))\dx \nd m,
\end{split}
\end{equation*}
and $\mathrm{I}_2:= \mathbb{E}[\dWit] = 0$. For the second term, we have
\begin{equation*}
\begin{split}
\mathrm{II} &:=\frac{1}{N}\sum_{j=1}^{N}\mathbb{E}[\langle D^2_\bx\phi(\bxit,\mitt) , \gammab(\mitt)\rangle_{\text{Tr}}\dt] = \iint  \Delta_\bx\phi(\bx,m)\gammab(m) \muN(\bx,m)\dx\nd m\\
 & \ = \iint \phi(\bx,m)\gammab(m)\Delta_\bx \muN(\bx,m)\dx\nd m,
\end{split}
\end{equation*}
and finally, the third term
\begin{align*}
\mathrm{III} &:=
    \frac{1}{N}\sum_{j=1}^{N}\mathbb{E}\left[-\partial_{m}\phi(\bxit\mitt)\mitt(\f(\bxit)-\fbarNt)\right]\\ 
    & = -\iint\partial_{m}\phi(\bx,m)(m(\f(\bx)-\fbarNt))\muN(\bx,m)\dx\nd m \\
    & = \iint \phi(\bx,m)\partial_{m}(m\muN(\bx,m))(\f(\bx)-\fbarNt)\dx\nd m\,.
\end{align*}
Adding all these terms together, we conclude that the dynamics of the empirical distribution, $\muN$, is governed by the Vlasov equation
\begin{equation}\label{eqn:mean_field_N}
    \partial_{t}\muN = \nabla_\bx\cdot (\muN\nabla\f) + \big(\f(\bx)-\fbarNt\big)\partial_{m}(m\muN) + \gammab(m)\Delta_\bx\muN\,.
\end{equation}

This formally provides the mean-field equation. In the mean-field limit, $\mu=\mu_t(\bx,m)$ is governed by:
\begin{subequations}\label{eqs:mean_field}
\begin{equation}\label{eqn:mean_field_pde}
    \partial_{t}\mu = \nabla_\bx\cdot (\mu\nabla\f) + \big(\f(\bx)-\fbarmut)\big)\partial_{m}(m\mu) + \gammab(m)\Delta_\bx\mu\,,
\end{equation}
driven by the corresponding  provisional minimum
\begin{equation}\label{eqn:bar_f_t_mu}
\fbarmut:=\frac{\mathbb{E}_{\mu}[m\f(\bx)]}{\mathbb{E}_{\mu}[m]}=\frac{\ds \iint m\f(\bx) \dmutxm}{\ds \iint m\dmutxm}\,.
\end{equation}
\end{subequations}
Observe that $\fbarmut$ denotes  the provisional minimum associated with mean-field $\mu$, which is to be distinguished from  the discrete provisional minimum in \eqref{eqn:ave_stochastic}, denoted $\fbarNt$.
 We note that, similar to the discrete mass conservation in \eqref{eq:SSA}, $\frac{1}{N}\sum_j \mitt\equiv 1$, the total mass in \eqref{eqn:mean_field_pde} is conserved and equals one 
 \[
\ddt\int m\dmutxm = 0,\quad \int m\dmutxm=1\,.
\]

Our first result quantifies the convergence of the empirical distribution, $\muN$, to its limiting mean-field $\mut$. To this end, we make the following assumption. \begin{assumption}\label{assumption: assmptn2}\mbox{}
       \begin{itemize}
        \item{\emph{Lipschitz bound}} --- \ \  $\f$, $\nabla \f$  and $\sqrt{\gammab(\cdot)}$ are all Lipschitz continuous functions with the Lipschitz constant denoted by $L$;
        \item{\emph{Boundedness}} --- \ \ There exists $D>0$ such that $|\f|+\sqrt{\gammab}\leq D$. Without loss of generality, we set $D\geq 1$ and can further assume $\f\geq 0$;
        \item{\emph{Bounded support uniformly in $m$}} --- there exists $M_{\max} <\infty$ such that\ \ 
\[
       \text{Supp}_\bx(\mu_{0}(\bx,m)) \subset [0, M_{\max}], \qquad \forall m\in (0,\infty).
 \]
    \end{itemize}
\end{assumption}

\noindent
The next theorem shows that the empirical distribution $\muN $ and the mean-field $\mu$ stay close. The proof is postponed to Section \ref{sec:mean_field_proof}.
\begin{theorem}[{\bf Mean-field limit}]\label{thm:mean_field_thm}
Assume that Assumption \ref{assumption: assmptn2} holds. Let $\mut=\mutxm$ be the  mean-field solution of  \eqref{eqn:mean_field_pde} and let $\ds \muN = \frac{1}{N}\sum_{j=1}^{N}\delta_{\bxit}(\bx)\otimes\delta_{\mitt}(m)$ be the empirical distribution associated with the ensemble of swarm-based solutions ~\eqref{eq:SSA} subject to compatible initial data so that $\{\bx^j_0\,,m^j_0\}$ are i.i.d. samples of $\muxmat{t=0}$. Then $\mut$ and $\muN$ are close in the Wasserstein sense:
\begin{equation}\label{eqn:close_in_mu}W_2\left(\mut,\muN\right)\rightarrow 0,\quad \text{in probability as $N\rightarrow\infty$}\,,
\end{equation}
and the corresponding provisional minimum \eqref{eqn:ave_stochastic}, $\fbarNt$, converges to $\fbarmut$ in \eqref{eqn:bar_f_t_mu}, with the law of large numbers rate: there exists a constant $C_t=C_t(\muat{0}, \f, d)>0$ independent of $N$ such that
        \begin{equation}\label{eqn:close_in_f}
            \mathbb{E}\left[\left|\fbarNt-\fbarmut\right|\right]<\frac{C_t}{\sqrt{N}}\,.
        \end{equation}
\end{theorem}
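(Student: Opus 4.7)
My plan is to establish propagation of chaos via a standard synchronous coupling. For each $j$, introduce an auxiliary particle $(\tildebxit,\tildemit)$ driven by the same Brownian motion $\Wit$ as the interacting particle $(\bxit,\mitt)$, but evolving under the mean-field dynamics:
\[
\nd \tildebxit=-\nabla \f(\tildebxit)\dt+\sqrt{2\gammab(\tildemit)}\,\dWit,\qquad \nd \tildemit=-\big(\f(\tildebxit)-\fbarmut\big)\tildemit\dt,
\]
with initial data $(\tildebxit,\tildemit)|_{t=0}=(\bx^j_0,m^j_0)$. Since the $(\bx^j_0,m^j_0)$ are i.i.d. samples of $\mu_0$, and since each $(\tildebxit,\tildemit)$ is a single-particle mean-field process, by uniqueness of solutions to the McKean--Vlasov SDE the $(\tildebxit,\tildemit)$ are i.i.d. with law $\mut$ for every $t\geq 0$. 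This reduces the theorem to two tasks: (i) control the coupling error $\mathbb{E}[|\bxit-\tildebxit|^2+|\mitt-\tildemit|^2]$, and (ii) control the law-of-large-numbers error between the empirical distribution $\tildemuN_t:=\frac{1}{N}\sum_j \delta_{\tildebxit}\otimes\delta_{\tildemit}$ of i.i.d. samples and $\mut$.

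For task (i), I would write Itô's formula for $|\bxit-\tildebxit|^2$ and $|\mitt-\tildemit|^2$. The Lipschitz bounds in Assumption~\ref{assumption: assmptn2} on $\nabla\f$ and $\sqrt{\gammab(\cdot)}$ control the position drift and diffusion by $C(|\bxit-\tildebxit|^2+|\mitt-\tildemit|^2)$. The mass equation introduces the extra ingredient
\[
\nd(\mitt-\tildemit)=-\big[\mitt(\f(\bxit)-\fbarNt)-\tildemit(\f(\tildebxit)-\fbarmut)\big]\dt,
\]
which, after adding and subtracting $\mitt\f(\tildebxit)$ and $\mitt\fbarmut$, splits into a Lipschitz-type term plus the drift contribution $\mitt|\fbarNt-\fbarmut|$. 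Here I use the fact that $\mitt$ stays uniformly bounded on $[0,T]$: the ODE structure yields $\mitt=m^j_0\exp\!\left(-\int_0^t(\f(\bxis)-\fbar^N_s)\nd s\right)$, and since $\f$ is bounded by $D$, we get $m^j_0 e^{-2Dt}\le \mitt\le m^j_0 e^{2Dt}$ deterministically; the same holds for $\tildemit$. Thus moments of both $\mitt$ and $\tildemit$ are controlled from the moments of $m^j_0$ under $\mu_0$.

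For task (ii), I would split the provisional-minimum error as
\[
\fbarNt-\fbarmut=\underbrace{\big(\fbarNt-\tildefbarNt\big)}_{\text{coupling}} + \underbrace{\big(\tildefbarNt-\fbarmut\big)}_{\text{LLN}},
\]
where $\tildefbarNt:=\sum_j\tildemit\f(\tildebxit)/\sum_j\tildemit$. Mass conservation guarantees both denominators are deterministically bounded below (the discrete sum $\sum_j \mitt=\sum_j m^j_0$ is conserved, and its i.i.d. analogue concentrates around $N$ by LLN, which is justified by an $L^2$ Chebyshev bound). Hence the ratio can be linearized: the numerator and denominator differences reduce, via the Lipschitz property of $\f$ and the boundedness of $\f$ and $m$, to $\tfrac{1}{N}\sum_j\!\big(|\bxit-\tildebxit|+|\mitt-\tildemit|\big)$ for the coupling piece, and to a centered sum of i.i.d.\ bounded-moment terms of order $1/\sqrt{N}$ for the LLN piece. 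Substituting this back into the Grönwall argument from task (i) yields $\mathbb{E}[|\bxit-\tildebxit|^2+|\mitt-\tildemit|^2]\le C_t/N$ by closing the loop, from which \eqref{eqn:close_in_f} follows directly, and \eqref{eqn:close_in_mu} follows by bounding $W_2^2(\muN,\mut)\le 2 W_2^2(\muN,\tildemuN_t)+2W_2^2(\tildemuN_t,\mut)$: the first term is $\le \tfrac{1}{N}\sum_j\mathbb{E}[|\bxit-\tildebxit|^2+|\mitt-\tildemit|^2]\to 0$, and the second vanishes in probability by the Varadarajan/Fournier--Guillin empirical convergence theorem applied to i.i.d.\ samples of $\mut$ (whose second moment is controlled by the moment propagation above).

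\textbf{Main obstacle.} The delicate step is bookkeeping for the provisional minimum $\fbarmut$ (and its discrete counterpart $\fbarNt$), which appears nonlocally in both the mass dynamics and the coupling estimate. Because $\fbar$ is a ratio, we must verify the denominator stays uniformly positive --- straightforward on the discrete side by mass conservation, but requiring a separate quantitative LLN on the i.i.d.\ side --- and we must propagate moments of $\mitt$ and $\tildemit$ in a way that is compatible with the Grönwall closure, so that the constants $C_t$ depend on $t$ only through the growth factor $e^{2Dt}$ and remain finite on every compact time interval.
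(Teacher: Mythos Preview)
Your proposal is correct and follows essentially the same route as the paper: a synchronous coupling to an auxiliary i.i.d.\ mean-field system $(\tildebxit,\tildemit)$, a Gr\"onwall closure on the averaged squared coupling error fed by an $O(1/N)$ LLN forcing term, and Fournier--Guillin for the empirical convergence of the auxiliary swarm to $\mu_t$. The only cosmetic differences are that the paper sets the denominator of the auxiliary provisional minimum to the deterministic $\mathbb{E}_\mu[m]$ (rather than your random $\sum_j \tildemit$), and it makes the ``denominator bounded below'' step explicit by conditioning on the event $\Omega=\{|\tfrac{1}{N}\sum_j m^j_0-\mathbb{E}_{\mu_0}[m]|\le 1/2\}$ whose complement has probability $O(1/N)$ --- exactly the Chebyshev device you anticipated.
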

The bound \eqref{eqn:close_in_f} provides  the crucial convergence bound, which translates the provisional minimum of the many-agent system  to that of the limiting mean-field equation.

\subsection{Large-time convergence--- from mean-field to global minimum}\label{sec:thm_converge_mean_field}
Upon translating the convergence of $\fbarNt$ to that of $\fbarmut$, we now switch gear to study the convergence of the mean-field limiting PDE. To this end, we need another set of assumptions for $\f,\gammab$, and $\mu_0$:
\begin{assumption}\label{assumption: assmptn1}
We assume that the following conditions hold true:
\begin{itemize}
    \item    $\gammab(m)$ has a bounded support: there exists an $m_c>0$ such that $\gammab(m) = 0$ when $m > m_c$.
     \item
      $(m_c, \infty)\cap \mathrm{supp}_{m}(\muat{0})\neq\emptyset$
       and $\ds M(0)=\iint m\dmuxmat{0}=1$.
   \item {$\f$ admits a unique global minimum $\bx_*:=\argmin_{\bx\in \Omega} \f(\bx)$, and thus $\nabla\f(\bx_*)=0$}.
\end{itemize}
\end{assumption}
The next theorem shows that the provisional minimum of the mean-field, $\fbarmut$, is converging towards the global minimum of $\f$. The proof is postponed to Section \ref{sec:long_time_proof} below.
\begin{theorem}[Large time behavior]\label{thm:long_time_thm}
    Assume that Assumptions \ref{assumption: assmptn2}, \ref{assumption: assmptn1} hold. Let $\mu$ be the mean-field with provisional minimum $\fbarmut$. We have
    \begin{equation}\label{eqn:f_liminf}
    \liminf_{t\rightarrow\infty}\fbarmut=\min_\bx \f(\bx)=\f_*\,.
    \end{equation}
    Specifically, --- for any $\epsilon>0$, we let $\Nhood_\epsilon$ denote the following $\epsilon$ neighborhood:
     \begin{equation}\label{eqn:Nhood_eps}
    \Nhood_\epsilon:=\left\{\bx\,|\,\f(\bx)<\f_* +\epsilon\right\}\,.
      \end{equation}
     Then, there exists a time  $t_{\epsilon}(m_c)< \infty$ defined by\footnote{Note that $\int_{\Nhood_\epsilon}\int^{\infty}_{m_c}m\dmutxm\leq \iint m\dmutxm =1$ for mass conservation. The term in the logarithm is always bigger than $1$, so that the expression on the right hand side of~\eqref{eqn:t_ep_delta} quantifies positive time.}:
     \begin{equation}\label{eqn:t_ep_delta}
    t_{\epsilon}(m_c)=\frac{2}{\epsilon}\log\left(\left(\sup_{t\in[0,1]}\int_{\Nhood_\epsilon}\int^{\infty}_{m_c}m\dmutxm\right)^{-1}\right)+1,
    \end{equation}
    and $t\leq t_\epsilon(m_c)$ such that
    \begin{equation}\label{eqn:f_exist}
    \fbarmut<\f_*+\epsilon.
    \end{equation}
\end{theorem}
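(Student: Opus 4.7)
The plan is to establish the quantitative bound~(\ref{eqn:f_exist}) by contradiction on the mean-field PDE~(\ref{eqn:mean_field_pde}), and then recover~(\ref{eqn:f_liminf}) by sending $\epsilon\downarrow 0$ and combining with the trivial $\fbarmut\geq \f_*$ (since $\fbarmut$ is a weighted average of $\f$-values against the probability weight $m\mu_t$ with total mass $\iint m\dmutxm = 1$). So I would assume toward contradiction that $\fbarmut \geq \f_* + \epsilon$ for every $t\in[0,t_\epsilon(m_c)]$, and aim to violate mass conservation.

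The natural Lyapunov-like quantity to track is the heavy mass trapped in the low-energy sublevel set,
\[
N_\epsilon(t) \;:=\; \int_{\Nhood_\epsilon}\int_{m_c}^\infty m\,\dmutxm\,.
\]
Two structural features from Assumption~\ref{assumption: assmptn1} make $N_\epsilon$ transparent to control. First, $\gammab(m)\equiv 0$ for $m>m_c$, so the diffusion term $\gammab(m)\Delta_\bx\mu$ contributes nothing once we restrict the integral to $\{m>m_c\}$. Second, $\Nhood_\epsilon$ is a sublevel set of $\f$, so $-\nabla\f$ points inward on $\partial\Nhood_\epsilon$; integrating the transport term $\nabla_\bx\cdot(\mu\nabla\f)$ against the indicator of $\Nhood_\epsilon$ yields a non-negative inward boundary flux. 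Substituting (\ref{eqn:mean_field_pde}) into $\dot N_\epsilon(t)$, the only term whose sign is not a priori clear is the mass-exchange term, which after integration by parts in $m$ (justified by decay of $m\mu$ at infinity, inherited from Assumption~\ref{assumption: assmptn2}) collapses to
\[
\int_{\Nhood_\epsilon}\bigl(\fbarmut-\f(\bx)\bigr)\left[m_c^2\,\mu(\bx,m_c)+\int_{m_c}^\infty m\mu(\bx,m)\,dm\right]d\bx,
\]
which is pointwise non-negative under the contradiction hypothesis because $\f(\bx)<\f_*+\epsilon\le\fbarmut$ on $\Nhood_\epsilon$.

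To promote non-negativity to a multiplicative lower bound of the form $\dot N\geq (\epsilon/2)\,N$, I would partition $\Nhood_\epsilon$ into the deep core $\Nhood_{\epsilon/2}$, where the explicit gap $\fbarmut-\f(\bx)\geq \epsilon/2$ forces exponential mass growth, and the annular shell $\Nhood_\epsilon\setminus\Nhood_{\epsilon/2}$, which contributes non-negatively both through the mass-exchange term and through the inward gradient-flow boundary flux across $\partial\Nhood_{\epsilon/2}$. Once a Gr\"onwall inequality $\dot N\geq (\epsilon/2)N$ is in hand on $[t_0,t_\epsilon(m_c)]$, integrating gives $N(t_\epsilon(m_c))\geq N(t_0)\exp((\epsilon/2)(t_\epsilon(m_c)-t_0))$. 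Selecting $t_0\in[0,1]$ to realize the supremum in~(\ref{eqn:t_ep_delta}) and using $N(t)\le\iint m\,d\mu_t=1$ forces the contradiction precisely at the time $t_\epsilon(m_c)$ stated, with the ``$+1$'' accounting for the initial unit of time used to locate a favorable starting moment.

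The main obstacle I anticipate is obtaining the exponential rate $\epsilon/2$ sharply in terms of $N_\epsilon$ itself rather than for the smaller $N_{\epsilon/2}$: the core--annulus split delivers the rate cleanly for mass in the core, so absorbing the shell contribution into a Gr\"onwall estimate for $N_\epsilon$ requires carefully tracking the inward transport flux from the shell into the core, or equivalently iterating on a shrinking family of neighborhoods. A secondary technical point is that Assumption~\ref{assumption: assmptn1} only guarantees heavy mass \emph{somewhere} initially, not necessarily inside $\Nhood_\epsilon$; the uniqueness of the global minimizer together with the diffusion of the light ($m<m_c$) agents is what must seed heavy mass inside $\Nhood_\epsilon$ within the first unit of time, which is exactly why the supremum in~(\ref{eqn:t_ep_delta}) is taken over $[0,1]$ rather than at $t=0$, and ensures the logarithm is well-defined.
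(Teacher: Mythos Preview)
Your contradiction mechanism is the same as the paper's: assume $\fbarmut\ge\f_*+\epsilon$ on the whole window, show that the heavy mass sitting in a low sublevel set grows exponentially, and collide with mass conservation at precisely the time $t_\epsilon(m_c)$. The difference is in the execution. You differentiate the functional $N_\epsilon(t)=\int_{\Nhood_\epsilon}\int_{m_c}^\infty m\,\rd\mu_t$ against the PDE and bookkeep three contributions (vanishing diffusion, inward transport flux, mass-exchange). The paper instead passes to characteristics: since $\gammab(m)=0$ for $m>m_c$, any sample $(\bx_s,m_s)$ with $m_s>m_c$ obeys the deterministic ODEs $\dot\bx_s=-\nabla\f(\bx_s)$ and $\dot m_s=m_s(\fbarmut-\f(\bx_s))$, so $\bx_s$ is trapped in the sublevel set by gradient descent and $m_s$ grows exponentially; this is then lifted back to the measure level via the pushforward identity. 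The trajectory picture buys you exactly what you flagged as your main obstacle: there is no boundary flux to track and no integration by parts in $m$, because invariance of $\Nhood_\epsilon\times(m_c,\infty)$ under the characteristic flow is immediate. Your PDE route is equally valid but heavier; the core--annulus split you propose is the PDE shadow of the paper's per-trajectory monotonicity of $\f(\bx_s(t))$.

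For~\eqref{eqn:f_liminf} the two routes diverge more substantially. The paper does \emph{not} deduce the $\liminf$ from the quantitative bound; it proves a separate lemma showing that if $\liminf_{t\to\infty}\fbarmut>\f_*$ then in fact $\fbarmut>\f_*+\epsilon_0$ for \emph{all} $t\ge0$ (the point being that $\fbarmuat{t_*}=\f_*$ at a finite time forces $\mu_{t_*}=\delta_{\bx_*}\otimes\eta$ with $\mathrm{supp}\,\eta\subset[m_c,\infty)$, which is a stationary solution), and then feeds this all-time lower bound into the exponential-growth argument on $[0,\infty)$. Your shortcut---derive~\eqref{eqn:f_liminf} from~\eqref{eqn:f_exist} by letting $\epsilon\downarrow0$---needs one extra sentence to close: the quantitative bound only produces a \emph{bounded} time $t\le t_\epsilon(m_c)$, so to get arbitrarily late times you must re-run the argument on $[T,\infty)$ for every $T$, using $\mu_T$ as new initial data. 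Once that time-shift is made explicit, your route is shorter and avoids the equilibrium analysis entirely.
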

Observe that even if $\Nhood_\epsilon\cap \mathrm{supp}_\bx\muat{0}(\cdot,m)=\emptyset$, then the presence of the Brownian motion implies that there exists a time, $t\in(0,1]$, such that $\Nhood_\epsilon\cap \mathrm{supp}_\bx\mut(\cdot,m)\neq\emptyset$, and consequently, \eqref{eqn:t_ep_delta} always yields a finite number threshold, $t_\epsilon$. The combination of~\eqref{eqn:t_ep_delta}-\eqref{eqn:f_exist} together claims that with roughly $t_\epsilon\approx\frac{1}{\epsilon}$ time, the global optimal value can be found within $\epsilon$ accuracy, implying an $\mathcal{O}(1/\epsilon)$ convergence. However, the constant dependence heavily rely on the initial data, and we cannot claim the optimality of the `convergence rate': our numerical findings reported in Section \ref{sec:numerical_exp} indicate the possibility of a much better convergence rate in the sense that $t_\epsilon=\log(1/\epsilon)$ is already sufficient to ensure \eqref{eqn:f_exist}. See Figure~\ref{fig:Ackley_1D_results_2}.

Combing Theorems \ref{thm:mean_field_thm} and \ref{thm:long_time_thm} we conclude the following main result of this paper, namely ---  the convergence of the empirical distribution to the mean-field.
\begin{theorem}\label{thm:rigorous}
    Assume that Assumptions \ref{assumption: assmptn2}, \ref{assumption: assmptn1} hold\footnote{Assumption~\ref{assumption: assmptn1} requires the uniqueness of the global minimum. This assumption can be replaced by finite many global minima $\{\bx_k\}_{k=1}^K$ with $\f(\bx_k)=\f_*$ for all $k$. The details of the analysis need to be revised accordingly. In the proof (Section~\ref{sec:long_time_proof}), we point out the specific location to revise to accommodate the situation where multiple global minima co-exist.}. Let $\muN(\bx,m)$ be the empirical distribution associated with an ensemble of solutions of the swarm-based \SGD\,  \eqref{eq:SSA}, $(\bxit,\mitt)$, subject to the initial data drawn i.i.d. from $\muat{t=0}$. Fix an arbitrary $\epsilon >0$. Then, for a large enough crowd spelled out in~\eqref{eqn:close_in_f}, depending on time, $t_{\epsilon/2}$ defined in~\eqref{eqn:t_ep_delta},
\[
N > \frac{4C_{t_{\epsilon/2}}^{2}}{\epsilon^{2}} \ \ \textnormal{where} \ \  t_{\epsilon/2}<\frac{4}{\epsilon}\log\left(\left(\sup_{t\in[0,1]}\int_{\Nhood_{\epsilon/2}}\int^{\infty}_{m_c}m\dmutxm\right)^{-1}\right)+1,
\]
so that there is a $t<t_{\epsilon/2}$ and the provisional minimum $\fbar^N_t$ is  within $\epsilon$ of $\min_\bx \f(\bx)$, 
\[
\mathbb{E}\left[\left|\fbar^N_t- \min_{\bx}\f(\bx)\right|\right] < \epsilon.
\]
\end{theorem}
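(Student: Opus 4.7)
\textbf{Proof plan for Theorem \ref{thm:rigorous}.} The plan is to assemble the two preceding theorems via a triangle inequality
\[
\mathbb{E}\bigl[\bigl|\fbar^N_t-\f_*\bigr|\bigr] \;\leq\; \mathbb{E}\bigl[\bigl|\fbar^N_t-\fbarmut\bigr|\bigr] \;+\; \bigl|\fbarmut-\f_*\bigr|,
\]
and then choose a single time $t$ at which both terms on the right are smaller than $\epsilon/2$. The left term will be controlled by Theorem~\ref{thm:mean_field_thm}, the right by Theorem~\ref{thm:long_time_thm}. No new analysis should be required beyond a careful choice of parameters.

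\textbf{Step 1: choose the time from the mean-field long-time bound.} Apply Theorem~\ref{thm:long_time_thm} with tolerance $\epsilon/2$. This guarantees the existence of some time $t^{\ast} \leq t_{\epsilon/2}(m_c)$, with $t_{\epsilon/2}$ as defined in \eqref{eqn:t_ep_delta}, such that
\[
\fbarmuat{t^{\ast}}\;<\;\f_*+\tfrac{\epsilon}{2}.
\]
Since $\fbarmut$ is a weighted average of values $\f(\bx)\geq \f_*$, one has $\fbarmuat{t^{\ast}} \geq \f_*$ automatically, and consequently $\bigl|\fbarmuat{t^{\ast}}-\f_*\bigr| < \epsilon/2$. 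This handles the second term in the triangle inequality.

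\textbf{Step 2: control the particle-to-mean-field gap at the same time.} Fix this $t^{\ast}$ and invoke Theorem~\ref{thm:mean_field_thm}, specifically the law-of-large-numbers bound \eqref{eqn:close_in_f}, to obtain
\[
\mathbb{E}\bigl[\bigl|\fbar^N_{t^{\ast}}-\fbarmuat{t^{\ast}}\bigr|\bigr] \;<\; \frac{C_{t^{\ast}}}{\sqrt{N}}.
\]
By the Gronwall-type nature of such mean-field error estimates, the constant $C_t$ in \eqref{eqn:close_in_f} is non-decreasing in $t$, so that $C_{t^{\ast}} \leq C_{t_{\epsilon/2}}$. The sample-size hypothesis $N > 4 C_{t_{\epsilon/2}}^{2}/\epsilon^{2}$ therefore yields $C_{t^{\ast}}/\sqrt{N} < \epsilon/2$. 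Combining Steps 1 and 2 with the triangle inequality delivers $\mathbb{E}\bigl[\bigl|\fbar^N_{t^{\ast}}-\f_*\bigr|\bigr] < \epsilon$, which is the claim.

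\textbf{Where the work sits.} Structurally this theorem is a corollary: the two genuinely hard ingredients — propagation of chaos / mean-field limit in the presence of a mass-coupled noise with mass-dependent annealing $\gammab(m)$, and the long-time decay of $\fbarmut$ towards $\f_*$ for the Vlasov-type PDE \eqref{eqs:mean_field} — are already packaged in Theorems~\ref{thm:mean_field_thm} and \ref{thm:long_time_thm}. The only subtlety in the assembly is the monotonicity (or at least the supremum over $[0,t_{\epsilon/2}]$) of the constant $C_t$ appearing in \eqref{eqn:close_in_f}; if the proof of Theorem~\ref{thm:mean_field_thm} only produces $C_t$ pointwise, one should either track the Gronwall factor explicitly or replace $C_{t_{\epsilon/2}}$ by $\sup_{s\leq t_{\epsilon/2}} C_s$, a harmless modification. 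Apart from this bookkeeping point, the proof is a three-line triangle inequality once the two theorems are in hand.
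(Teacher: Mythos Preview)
Your proposal is correct and matches the paper's proof essentially line for line: apply Theorem~\ref{thm:long_time_thm} with tolerance $\epsilon/2$ to obtain a time $t\le t_{\epsilon/2}$ at which $|\fbarmut-\f_*|<\epsilon/2$, then apply Theorem~\ref{thm:mean_field_thm} at that time and use the monotonicity $C_t\le C_{t_{\epsilon/2}}$ together with the triangle inequality. The paper also explicitly notes the monotonicity point you flagged (``note $C_{t_{\epsilon/2}}>C_t$''), so no modification is needed.
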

\noindent
\emph{Proof of Theorem \ref{thm:rigorous}.}
By Theorem~\ref{thm:long_time_thm}, there is a $t<t_{\epsilon/2}$ so that
\[
\left|\fbarmuat{t} - \min_{\bx}\f(\bx)\right| < \frac{\epsilon}{2},
\]
and according to Theorem~\ref{thm:mean_field_thm}, there exists a constant, $C_t$, such that 
\[
 \mathbb{E}\left[\left|\fbar^N_{t} - \fbarmuat{t}\right|\right]<\frac{C_t}{\sqrt{N}}.
 \]
Hence, for large enough $N$ such that $\displaystyle \frac{C_{t}}{\sqrt{N}}<\frac{\epsilon}{2}$, or equivalently $N > \frac{4C^2_{t_{\epsilon/2}}}{\epsilon^2}$ (note $C_{t_{\epsilon/2}}>C_t$), we conclude
\begin{equation*}
\begin{split}
\mathbb{E}\left[\left|\fbar^N_{t} - \min_{\bx}\f(\bx)\right|\right]  \leq\mathbb{E}\left[\left|\fbar^N_{t} - \fbarmuat{t}\right|\right] + \left|\fbarmuat{t} - \min_{\bx}\f(\bx)\right|\leq \epsilon\,. \quad  \square
 \end{split}
\end{equation*}

\section{From mean-field to macroscopic description}\label{sec:macro}
We shall briefly comment on the macroscopic description of the swarm-based \SGDc furnished in terms of the first two moments ---
the density, $\rho: {\mathbb R}_+\times {\mathbb R}^d \mapsto {\mathbb R}_+$, and momentum, $\rho \vel: {\mathbb R}_+\times {\mathbb R}^d \mapsto {\mathbb R}$, (observe that time-dependence  is denoted as usual for the  density $\rho=\rho(t,\bx)$,  macroscopic mass, $\vel=\vel(t,\bx)$ etc.)
\[
\rho(t,\bx):=\int \mutxm\nd m, \qquad \rho \vel(t,\bx):=\int m\mutxm\nd m.
\]
Taking the first two moments of the mean-field \eqref{eqs:mean_field}, we find
\be\label{eq:moments}
\left\{
\begin{split}
\rho_t - \nabla_\bx\cdot(\rho\nabla \f)&= \Delta_\bx\int \gammab(m)\mutxm\nd m, \\
(\rho \vel)_t - \nabla_\bx\cdot(\rho \vel \nabla \f)& = \big(\fbar(t)-\f(\bx)\big)\rho \vel + \Delta_\bx\int m\gammab(m)\mutxm\nd m.
\end{split}
\right.
\ee
We normalize the total mass and momentum, $\ds \int \rho(t,\bx)\dx\equiv 1$ and $\ds \int \rho \vel(t,\bx)\dx\equiv 1$, which leaves us with a provisional minimum, $\fbar(t)=\fbarmut$, given by 
\[
\fbar(t)=\int \f(\bx)\rho \vel(t,\bx) \dx. 
\]
To unlock a closed form of the diffusion on the right, we assume the simplest closure based on a mono-kinetic  pseudo-Maxwellian , $\mutxm=\rho(t,\bx)\delta(m-\vel(t,\bx))$. This yields 
\begin{subequations}\label{eqs:moments-closed}
\begin{align}\label{eq:moments-closeda}
\rho_t - \nabla_\bx\cdot(\rho\nabla \f)&= \Delta_\bx(\gammab(t,\bx)\rho(t,\bx)), \qquad \gammab(t,\bx):=\gammab(\vel(t,\bx))\\
(\rho \vel)_t - \nabla_\bx\cdot(\rho \vel \nabla \f)& = \big(\fbar(t)-\f(\bx)\big)\rho \vel + \Delta_\bx\big( \gammab(t,\bx)\rho \vel(t,\bx)\big). \label{eq:moments-closedb}
\end{align}
\end{subequations}

The momentum \eqref{eq:moments-closedb} can be converted into a  drift-diffusion
equation for the velocity $\vel$,  
\begin{subequations}\label{eqs:moments-final}
\begin{align}\label{eq:moments-finala}
\rho_t - \nabla_\bx\cdot(\rho\nabla \f)&= \Delta_\bx(\gammab(t,\bx)\rho(t,\bx)), \\
 \vel_t - \nabla \f\cdot\nabla_\bx \vel& = \big(\fbar(t)-\f(\bx)\big) \vel + \frac{1}{\rho}\Delta_\bx\big( \gammab(t,\bx)\rho \vel\big) - \frac{1}{\rho}\Delta_\bx\big( \gammab(t,\bx)\rho \big) \vel, \quad \bx\in \textnormal{supp}\{\rho(t,\cdot)\}.
\end{align}
\end{subequations}
We shall not dwell on a detailed study of \eqref{eqs:moments-final}, but note  the decent estimate
\[
\ddt \int (\f(\bx)-\f_*)\rho(t,\bx)\dx= -\int |\nabla \f(\bx)|^2\rho(t,\bx)\dx
+\int \Delta_\bx\f(\bx)\gammab(t,\bx)\rho(t,\bx)\dx,
\]
with \emph{expected}  long-time behavior $\rho(t,\bx) \stackrel{t\rightarrow \infty}{\longrightarrow}\delta(\bx-\bx_*)$ and $\vel(t,\bx) \stackrel{t\rightarrow \infty}{\longrightarrow}{\mathds 1}(\bx_*)$. Observe that $\gammab(t,\bx)$ should vanish as $\bx\rightarrow \bx_*$.

\section{Pitfalls}\label{sec:other_methods}
We examine the swarm-based \SGD\, in two extreme cases --- when $\gammab(m)\equiv 0$ and $\gammab(m)\equiv 1$. The former case turns off stochasticity and amounts to a deterministic system of $N$ independent gradient 
descending agents; we show that their convergence fails in probability sense. The latter turns on stochasticity uniformly for all agents, which amounts to a stochastic system of $N$ independent gradient descending agents; we show that their convergence fails in accuracy  sense.

We note that in both cases, the resulting systems are passive systems in the sense that  the weights of agents do \emph{not} affect their trajectories. As a consequence, agents roam  the landscape with equal randomness with no distinction between leaders and explorers. The convergence of the provisional minimum $\fbarmut$ is naturally worse than in the communicating swarm-based dynamics.

\subsection{Swarming with no communication}
\label{sec:deterministic}
What is  the effect of Brownian motion in our swarm-based \SGD? If we turn off the amplitude, $\gammab(m)\equiv 0$, then \eqref{eq:SSA} is reduced to a deterministic system of \emph{non-interacting} agents with no Brownian motion (and again we switch notations of time-dependence in deterministic quantities, $\bxi(t), \mi(t), \fbarN(t)$ etc.)
\be\label{eqn:deterministic_system}
    \left\{\begin{split}
        \ddt \bxi(t) & =  -\nabla \f(\bxi(t)) \\
        \ddt \mi(t) & =  -\mi(t)\big(\f(\bxi(t)) - \fbarN(t)\big), \quad 
    \end{split}\right. j=1,...,N.
\ee
We still assume random initial configurations: the initial drawing is independent, with $\{\bxi(0)\}^N_{j=1}$ drawn from an initial distribution $\rho_0$, and $\mi(0)=\frac{1}{N}$. Since the total mass is conserved, we can be $M(t) = 1$, so that   provisional minimum is reduced to the usual average $\fbarN(t) = \sum_{j=1}^{N}\mi(t)\f(\bxi(t))$. The main feature \emph{missing}  in \eqref{eqn:deterministic_system} is communication. Different agents at different positions proceed along gradient decent, independently of each other: since communication through provisional minimum is missing, the dynamics lacks the collective engagement of the  agents as a self-organized swarm.

The essential role of communication was already highlighted in  the context of SBGD method, \cite{eitan_2022}: once we decouple the step-size and mass, setting a \emph{uniform} step-size in \eqref{eq:SBGD}, that is, $\betab(m) \equiv 1$,
then the success rate of the resulting non-communicating swarm-based method decreases  dramatically.  
The system \eqref{eqn:deterministic_system} provides yet another realization of the essential role of communication. We note that since all particles are ``descending,'' it is also straightforward to show that the collective mean objective $\fbarN(t)$ is always decreasing: $\ddt\fbarN(t)\leq 0$. See Section \ref{sec:pf_of_thhm_1} for the proof.
However, even though $\fbarN(t)$ continues to decay in time, it does not guarantee the desired convergence to the global minimum,  $\ds\liminf_{t\rightarrow\infty}\fbarN(t)=\min_\bx \f(\bx)$. This is due to the passive nature of the gradient descent of the agents.  In particular, it is likely that all agents will become trapped at local minima or saddle points, especially when the number of particles is not sufficiently large. Turning on the communication term that adjusts strength of Brownian motion, as done in~\eqref{eq:SSA}, allows particles situated at local minima to experience increased randomness. This increased randomness facilitates their escape from local basins. With the communication and the randomness turned off, we have the following:
\begin{theorem}[{\bf Lack of communication and failure in probability}]\label{thm:small_probability} Assume $\f$ has a unique global 
minimum $\bx_*$ inside an open set $\Omega$ so that $\f_*=\f(\bx_*)$. Moreover, 
 $\nabla \f(\bx)=0$ on $\partial \Omega$. 
 If we let $\{\bxi(0)\}$ drawn from $\rho_0$, $\mi(0)=\frac{1}{N}$ and evolve according to~\eqref{eqn:deterministic_system}, then the success rate is bounded:
\begin{equation}\label{eqn:stop_decay_property} \mathbb{P}\left(\lim_{t\rightarrow\infty}\fbarN(t)=\f_*\right)\leq 1-\left(1-\int_\Omega \nd\rho_0\right)^N\,.
\end{equation}
In particular, if $\mathrm{supp}_\bx(\rho_0)\cap\Omega=\emptyset$, the success rate is $0$, meaning $\mathbb{P}\left(\lim_{t\rightarrow\infty}\fbarN(t)=\f_*\right)=0$ for all $N$.
\end{theorem}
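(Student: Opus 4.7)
The plan is to prove the contrapositive sample-pathwise: if no agent's initial position lies in $\Omega$, then the deterministic gradient system \eqref{eqn:deterministic_system} cannot drive $\fbarN(t)$ down to $\f_*$. Independence of the initial draws then yields the quoted probability bound, and in particular the corollary when $\mathrm{supp}_\bx(\rho_0)\cap\Omega=\emptyset$.

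First I would establish forward invariance of $\mathbb{R}^d\setminus\Omega$ under the gradient flow $\ddt\bx=-\nabla\f(\bx)$. Because $\nabla\f$ is Lipschitz by Assumption~\ref{assumption: assmptn2}, ODE solutions are unique. Since $\nabla\f\equiv 0$ on $\partial\Omega$, every boundary point is a stationary equilibrium, so a trajectory starting in the exterior of $\bar\Omega$ cannot reach $\partial\Omega$ in finite time: otherwise, uniqueness would force it to coincide with the constant boundary equilibrium for all $t$, contradicting the non-boundary initial condition. A trajectory starting on $\partial\Omega$ is already constant. Hence $\bxi(0)\notin\Omega$ implies $\bxi(t)\notin\Omega$ for all $t\geq 0$. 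Because $\Omega$ is open and contains $\bx_*$, some ball $B(\bx_*,r)\subset\Omega$ exists, giving a uniform separation $\|\bxi(t)-\bx_*\|\geq r>0$ for every such trajectory.

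On the event $\{\bxi(0)\notin\Omega\text{ for all }j\}$ I would next show $\fbarN(t)$ stays bounded away from $\f_*$. The chain rule along the flow yields $\ddt\f(\bxi(t))=-|\nabla\f(\bxi(t))|^2\leq 0$, so each $\f(\bxi(t))$ is non-increasing with limit $\f^j_\infty\geq\f_*$. A LaSalle-type argument then delivers $\f^j_\infty>\f_*$: accumulation points of the trajectory must lie in the critical set $\{\nabla\f=0\}$, and by uniqueness $\bx_*$ is the only critical point at level $\f_*$, but $\bx_*$ is excluded by $\|\bxi(t)-\bx_*\|\geq r$. Using the total-mass conservation $\sum_j\mi(t)\equiv 1$ noted just above the theorem, this gives
\[
\fbarN(t)=\sum_{j=1}^N\mi(t)\f(\bxi(t))\geq\sum_{j=1}^N\mi(t)\,\f^j_\infty\geq\min_{1\leq j\leq N}\f^j_\infty>\f_*,
\]
so $\lim_{t\to\infty}\fbarN(t)\neq\f_*$ on this event. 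Since $\{\bxi(0)\}_{j=1}^N$ are i.i.d.\ from $\rho_0$, this bad event has probability $(1-\int_\Omega\nd\rho_0)^N$, and success is contained in its complement, yielding \eqref{eqn:stop_decay_property}; the bound collapses to $0$ when $\mathrm{supp}_\bx(\rho_0)\cap\Omega=\emptyset$.

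The main obstacle is making the LaSalle step rigorous without coercivity of $\f$: \emph{a priori} a trajectory trapped in $\mathbb{R}^d\setminus\Omega$ could drift to infinity with $\f$-values accumulating at $\f_*$. I would handle this by combining the energy identity $\int_0^\infty|\nabla\f(\bxi(t))|^2\,\nd t=\f(\bxi(0))-\f^j_\infty<\infty$ with Barbalat's lemma (applicable because $\ddt|\nabla\f(\bxi(t))|^2$ is uniformly bounded via the Lipschitz bounds on $\f$ and $\nabla\f$), which forces $\nabla\f(\bxi(t))\to 0$; then continuity of $\f$ plus uniqueness of the global minimum implies that $\nabla\f\to 0$ together with $\f\to\f_*$ can only occur on sequences accumulating at $\bx_*$, contradicting the separation $\|\bxi(t)-\bx_*\|\geq r$.
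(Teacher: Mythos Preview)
Your overall strategy coincides with the paper's: both argue the event inclusion
\[
\Big\{\lim_{t\to\infty}\fbarN(t)=\f_*\Big\}\subset\Big\{\exists\, j:\ \bxi(0)\in\Omega\Big\},
\]
and then invoke independence of the initial draws to get \eqref{eqn:stop_decay_property}. The paper dispatches the inclusion in one sentence (``the deterministic system drives the samples to the local minima of the local basin in which the samples are located at $t=0$''), whereas you supply the underlying mechanism: forward invariance of $\mathbb{R}^d\setminus\Omega$ via the fact that $\partial\Omega$ consists of equilibria, monotonicity of $\f$ along trajectories, and a LaSalle/Barbalat argument to rule out $\f(\bxi(t))\to\f_*$. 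This is a more honest account of the same idea.

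One caveat on your final paragraph: the Barbalat step gives $\nabla\f(\bxi(t))\to 0$ and $\f(\bxi(t))\downarrow\f^j_\infty$, but the implication ``$\nabla\f\to 0$ and $\f\to\f_*$ forces accumulation at $\bx_*$'' needs the trajectory (or at least the relevant sublevel set) to be precompact. Without coercivity, a trajectory could in principle escape to infinity along a valley where both $\f\to\f_*$ and $\nabla\f\to 0$, never accumulating anywhere. You correctly flag this as the main obstacle, but continuity and uniqueness of the minimizer alone do not close it; you would need either boundedness of $\{\f\le\f_*+\delta\}$ for small $\delta$, or to interpret $\Omega$ as the (maximal) basin of attraction of $\bx_*$ so that by definition no exterior trajectory converges to $\bx_*$. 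The paper's one-line proof glosses over exactly this point, so you are not missing anything relative to the paper --- you have simply exposed a tacit assumption that both arguments rely on.
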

\noindent
\emph{Proof}. To prove \eqref{eqn:stop_decay_property}, we first notice that without the Brownian motion, the deterministic system drives the samples to the local minima of the local basin in which the samples are located at $t=0$, so the event of $\left\{\lim_{t\rightarrow\infty}\fbarN(t)=\min \f\right\}\subset\left\{\exists \bx^i(0)\in \Omega\right\}$. This implies
\[
\mathbb{P}\left(\lim_{t\rightarrow\infty}\fbarN(t)=\min \f\right)\leq \mathbb{P}\left(\left\{\exists \bx^j(0)\in \Omega\right\}\right)=1-\left(1-\int_\Omega \nd\rho_0\right)^N\,.
\]
When $\mathrm{supp}_\bx(\rho_0)\cap\Omega=\emptyset$, we have $\int_\Omega\nd\rho_0=0$, so using this formula, we have for all $N$,
\[
\mathbb{P}\left(\lim_{t\rightarrow\infty}\fbarN(t)=\min \f\right)=0\,,
\]
Since $\fbarN(t)$ is a decreasing  in $t$, the above equality leads to
$\mathbb{P}\left(\lim_{t\rightarrow\infty}\fbarN(t)>\min \f\right)=1.\hfill \square$

This theorem explicitly spells out the ``probability failure'', so there is a nonzero probability that the mean objectives obtained by the particles cannot ever achieve the global optimizer.

\subsection{Stochastic system}
The limitation of \eqref{eqn:deterministic_system} clearly originates from the fact that particle evolution closely resembles classical gradient descent, lacking a mechanism for domain exploration and avoiding entrapment in local minima. The other end of the spectrum is to introduce ``too much'' exploration. Indeed,  setting $\sigma = 1$, we have:

\begin{equation}\label{eqn:langevin_system}
    \left\{\begin{split}
        \dxit & =  -\nabla \f(\bxit)\dt + \sqrt{2}\dWit \\
        \dmit & =  -\mitt\big(\f(\bxit) - \fbarNt\big)\dt
    \end{split}\right.,\quad j=1,...,N
\end{equation}
where $\Wit$ is the Brownian motion at time $t$ associated with particle $j$. The only difference compared to \eqref{eqn:deterministic_system} is the addition of a constant Brownian motion term. This Brownian motion term allows more effective exploration of the domain by preventing agents from becoming trapped at any local minima.

The trajectory of $\{\bxit\}$ resembles that of Langevin Monte Carlo, and under appropriate assumptions, it is well known that the distribution of $\bxit$ converges to $\rho_\infty\propto \exp(-\f(\bx))$ as $t\rightarrow\infty$. Incorporating mass exchange in \eqref{eqn:langevin_system} does not eliminate this issue. Carrying out the same mean-field analysis as was done in Section~\ref{sec:mean_field_analysis}, we find that the limiting PDE is:
\begin{equation}\label{eqn:F-P_ex_no_gamma}
\begin{split}
    \partial_{t}\mu = \nabla_\bx\cdot (\mu\nabla\f) + \big(\f(\bx)-\fbarmut\big)\partial_{m}(m\mu) &+\Delta_\bx\mu, \\
      \displaystyle \fbarmut = \frac{\ds \iint m\f(\bx)\mutxm\nd \bx\nd m}{\ds \iint m\mutxm\nd \bx\nd m}.
     \end{split}
\end{equation}
Analyzing this mean-field PDE, we could show that there is always a positive gap between $\fbarmut$ and $\f_{*}$.
\begin{theorem}\label{thm:no_gamma}
Consider a function $\f$ satisfying  the conditions outlined in Assumptions \ref{assumption: assmptn2} and \ref{assumption: assmptn1}, and further assume it is uniformly strongly convex in a ball around the global minimum $\bx_*$, meaning $\Delta_\bx \f\geq \xi d$ for $\xi>0$ and $\bx\in B_R(\bx_*)$. Then there exists $\epsilon>0$ (depending on $L,D,\xi, d$ and $R$) such that the provisional minimum associated with the mean field $\mu$ in \eqref{eqn:F-P_ex_no_gamma} satisfies
\begin{equation}\label{eqn:lower_bound_f_bar}
\fbarmut\geq \min\{\f_*+\epsilon,\fbarmuat{0}\}, \qquad \forall t>0.
\end{equation}
\end{theorem}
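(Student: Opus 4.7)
The plan is to reduce the mean-field equation \eqref{eqn:F-P_ex_no_gamma} to a scalar evolution of the macroscopic first moment $\rho\vel(t,\bx):=\int m\,\mutxm\,\nd m$, and then to derive a differential inequality for $\fbarmut$ that provides a barrier at $\f_*+\epsilon$. Since $M(0)=1$ is preserved, $\int\rho\vel\,\nd\bx=1$ for all $t$ and $\fbarmut=\int\f(\bx)\,\rho\vel(t,\bx)\,\nd\bx$. Multiplying \eqref{eqn:F-P_ex_no_gamma} by $m$ and integrating in $m$ (using $\int m\,\partial_m(m\mu)\,\nd m=-\rho\vel$ after an integration by parts) yields the closed equation
\[
\partial_t(\rho\vel) \,=\, \nabla_\bx\cdot(\rho\vel\,\nabla\f) + (\fbarmut-\f)\,\rho\vel + \Delta_\bx(\rho\vel),
\]
which is exactly \eqref{eq:moments-closedb} but now \emph{exact} — no mono-kinetic closure is needed because the Brownian amplitude is mass-independent.

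Differentiating $\fbarmut$ in time, substituting this PDE, and integrating by parts twice in $\bx$ produces the central identity
\[
\ddt\fbarmut \,=\, -\int|\nabla\f|^2\rho\vel\,\nd\bx \,-\, \mathrm{Var}_{\rho\vel}(\f) \,+\, \int\Delta_\bx\f\,\rho\vel\,\nd\bx,
\]
where $\mathrm{Var}_{\rho\vel}(\f):=\int\f^2\rho\vel-(\fbarmut)^2\geq 0$. The first two terms are dissipative, while the last is a production term that is bounded below by $\xi d$ on $B_R(\bx_*)$ thanks to local strong convexity. The strategy is to show that when $\delta:=\fbarmut-\f_*$ is small all three terms are controlled \emph{linearly in $\delta$}, so that the $\xi d$ production wins.

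Set $\eta:=\inf_{\bx\notin B_R}(\f-\f_*)>0$. Markov's inequality applied to $\int(\f-\f_*)\rho\vel=\delta$ gives the escaped-mass bound $\int_{B_R^c}\rho\vel\leq\delta/\eta$. On $B_R$, strong convexity yields $|\bx-\bx_*|^2\leq 2(\f-\f_*)/\xi$, hence by the Lipschitz bound $|\nabla\f|\leq L$ one has pointwise $|\nabla\f|^2\leq(2L^2/\xi)(\f-\f_*)$ and $(\f-\f_*)^2\leq(2L^2/\xi)(\f-\f_*)$ on $B_R$; combined with the trivial global bounds $|\nabla\f|\leq L$, $|\Delta_\bx\f|\leq Ld$, $|\f|\leq D$ on $B_R^c$, one obtains three linear-in-$\delta$ estimates
\[
\int|\nabla\f|^2\rho\vel \leq \bigl(\tfrac{2L^2}{\xi}+\tfrac{L^2}{\eta}\bigr)\delta,\quad \mathrm{Var}_{\rho\vel}(\f)\leq \bigl(\tfrac{2L^2}{\xi}+\tfrac{4D^2}{\eta}\bigr)\delta,\quad \int\Delta_\bx\f\,\rho\vel \geq \xi d - \tfrac{(\xi+L)d}{\eta}\delta.
\]
Combining, $\ddt\fbarmut\geq \xi d - C\delta$ for a constant $C=C(L,D,\xi,R,d,\eta)$, and choosing $\epsilon:=\min\{\xi d/(2C),\eta\}$ makes $\ddt\fbarmut\geq \xi d/2>0$ whenever $\fbarmut\leq\f_*+\epsilon$. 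A standard barrier argument then concludes: if $\fbarmuat{0}\geq\f_*+\epsilon$, continuity together with strict positivity of $\ddt\fbarmut$ on $\{\fbarmut=\f_*+\epsilon\}$ forbids any downward crossing; if $\fbarmuat{0}<\f_*+\epsilon$, then $\fbarmut$ is strictly increasing on every subinterval where it still lies below $\f_*+\epsilon$, and in particular $\fbarmut\geq\fbarmuat{0}$. Either way, \eqref{eqn:lower_bound_f_bar} holds with $\epsilon$ as above.

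The main obstacle is establishing the gap $\eta>0$ uniformly outside $B_R$. Strong convexity alone only guarantees $\f\geq\f_*+\xi R^2/2$ on $\partial B_R$, and since $\f$ is merely bounded above by $D$, one cannot a priori exclude sequences in $B_R^c$ on which $\f\to\f_*$. I would secure this step either by imposing a mild additional hypothesis such as coercivity of $\f$ at infinity (standard in the simulated-annealing literature), or by a support-propagation argument that exploits the compactly supported initial datum of Assumption~\ref{assumption: assmptn2} together with Gaussian-type a priori estimates for the $\bx$-marginal $\int\mu\,\nd m$ (which, because the Brownian amplitude is mass-independent, satisfies the standalone overdamped Fokker–Planck $\partial_t\rho=\nabla_\bx\cdot(\rho\nabla\f)+\Delta_\bx\rho$). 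Once $\eta$ is in hand, every remaining step reduces to the elementary linear-in-$\delta$ bookkeeping described above.
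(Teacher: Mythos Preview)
Your proposal is correct and follows essentially the same route as the paper: derive the identity
\[
\ddt\fbarmut = -\mathbb{E}_{\rho\vel}\big[|\nabla\f|^2\big] - \mathrm{Var}_{\rho\vel}(\f) + \mathbb{E}_{\rho\vel}[\Delta_\bx\f],
\]
bound each term linearly in $\delta=\fbarmut-\f_*$, use Markov's inequality to control the mass outside the strongly convex ball, and conclude by a barrier argument. The paper's treatment of the first two terms is slightly slicker than yours: instead of splitting $B_R$ versus $B_R^c$, it invokes the global inequality $|\nabla\f|^2\leq 2L(\f-\f_*)$ (valid for any $L$-smooth function at its minimum) to get $\mathbb{E}[|\nabla\f|^2]\leq 2L\,\delta$ directly, and uses $0\leq\f-\f_*\leq D$ to obtain $\mathrm{Var}(\f)\leq D\,\delta$ in one line. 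Your split-domain bounds are correct but more laborious.

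You are also right to flag the gap concerning $\eta=\inf_{B_R^c}(\f-\f_*)>0$. The paper handles this by asserting the equivalent statement ``there exists $\epsilon'>0$ such that $\f^{-1}([\f_*,\f_*+\epsilon'])\subset B_R(\bx_*)$,'' citing only the uniqueness of the global minimizer from Assumption~\ref{assumption: assmptn1}. Strictly speaking, uniqueness plus boundedness of $\f$ does not exclude $\f(\bx_n)\to\f_*$ along an unbounded sequence, so both arguments rest on this same implicit compactness/coercivity premise; your honesty about it is an improvement, and either of your proposed remedies (coercivity at infinity, or a tail estimate on the $\bx$-marginal) would close it.
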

For brevity, the proof of this theorem is in Appendix \ref{sec:langevin}. This theorem explicitly spells out the ``accuracy failure.'' The global optimizer can not be achieved by the mean objective.

\section{Proofs of the main results}\label{sec:proofs_main_results}
In this section we prove Theorem \ref{thm:mean_field_thm} on the convergence to the mean-field limit, and Theorem \ref{thm:long_time_thm} on the large-time behavior of the limiting equation. 
\subsection{Convergence to the mean-field limit}\label{sec:mean_field_proof}
 Theorem~\ref{thm:mean_field_thm} states that the empirical distribution $\muN$ that assembles all samples that follow the coupled SDE system~\eqref{eq:SSA}, is close to the limiting distribution $\mu$ that solves the PDE~\eqref{eqn:mean_field_pde}.

There are a few different but related approaches for justifying mean-field limits, including the coupling method, the energy/entropy estimates, the utilization of BBGKY (Bogoliubov–Born–Green–Kirkwood–Yvon hierarchy) hierarchies and running the tightness argument to confine a sequence of measures for the convergence; see references~\cite{Meleard1996, Sznitman1991}. Here  we employ the coupling method, introduced in~\cite{Sznitman1991}. The  coupling method places a stronger requirement on the field, but the machinery is rather simple to use, and it has the advantage of providing precise convergence rate in terms of  the number of particles, $N$.

To apply the coupling method, one usually designs an auxiliary system that is pushed forward by the underlying field. This auxiliary system serves as a bridge  linking  the discrete SDE system and the limiting mean-field PDE. On the one hand, it is driven by the underlying field and thus its ensemble closely resembles $\rho$, and on the other hand, this system adopts the SDE structure and can be easily compared with the original self-consistent SDE system. In our context, we define such auxiliary system as $\left\{\tildebxit\,,\tildemit\right\}_{j=1}^N$, that are governed by:
\begin{equation}\label{eqn:swarm_continuous}
    \left\{\begin{array}{rcl}
    \nd\tildebxit & = & -\nabla \f\left(\tildebxit\right)\dt + \sqrt{2\gammab\left(\tildemit\right)}\dWit \\
    \nd\tildemit & = &  -\tildemit(\f\left(\tildebxit\right) - \fbarmut)\dt,
    \end{array}\right.
\end{equation}
with the initial data set to be $\{\widetilde{\bx}^j_0 = \bxi_0\,,\widetilde{m}^j_0=\mi_0\}$, and $\fbarmu = \mathbb{E}_\mu[mF(\bx)]/\mathbb{E}_\mu[m]$ was defined in~\eqref{eqn:bar_f_t_mu}. For this system, we define the corresponding empirical distribution and the provisional minimum:
\begin{equation}\label{eqn:aux_continuous}
    \tildemuNt=\frac{1}{N}\sum_i\delta_{\tildebxit}(\bx)\otimes \delta_{\tildemit}(m)\,,\quad\text{and}\quad
    \tildefbarNt  = \frac{\frac{1}{N}\sum_{j}\tildemit \f\left(\tildebxit\right)}{\mathbb{E}_\mu[m]}\,.
\end{equation}

We note that this system has a clear similarity with the original SDE~\eqref{eq:SSA} but is passive, in the sense that the dynamics of $\tildemit$ is driven by $\fbarmu$ instead of $\tildefbarNt$. While the dynamics of $\widetilde{\bx}_t$ agrees with that of $\bx_t$, the mass changes call for different rates. In particular, in~\eqref{eq:SSA}, we compare $\f(\bx_t)$ with $\fbarNt$ that is self-generated in~\eqref{eqn:ave_stochastic}, and here, $\f(\widetilde{\bx}_t)$ is compared with $\fbarmu$ defined from the underlying PDE system~\eqref{eqn:mean_field_pde}. As a consequence, this new $\{\widetilde{\bx}^j_0 = \bxi_0\,,\widetilde{m}^j_0=\mi_0\}$ system gets passively pushed forward by the underlying $\mu$.

This design of the new auxiliary system prompts two lemmas. 
\begin{lemma}\label{lem:aux_mean}
Suppose Assumption~\ref{assumption: assmptn2} holds. Let $\{\tildebxit\,,\tildemit\}_{j=1}^N$ be the system defined in~\eqref{eqn:swarm_continuous}
 and let $\mut$ solve~\eqref{eqn:mean_field_pde}. If the initial condition $\{\widetilde{\bx}^j_0 = \bxi_0\,,\widetilde{m}^j_0=\mi_0\}$ is drawn from $\muat{t=0}$, then
\begin{itemize}
    \item $\mut$ and $\tildemuNt$ are close in Wasserstein sense:
     \[
    W_{2}(\mut,\tildemuNt) \to 0\,, \quad \text{in probability as }N\to \infty\,.
    \]
    \item There exists a constant $C= C(t, d, \muat{0}, \f)$ independent of $N$ such that $\tildefbarNt$ and $\fbarmut$ are close
    \[
    \mathbb{E}(|\tildefbarNt - \fbarmut|) < \frac{C}{\sqrt{N}}\,,
    \]
    where $\tildefbarNt$ and $\fbarmut$ are defined in~\eqref{eqn:aux_continuous} and~\eqref{eqn:bar_f_t_mu} respectively.
    \end{itemize}
\end{lemma}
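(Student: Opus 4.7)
The plan is to exploit the fact that the auxiliary system~\eqref{eqn:swarm_continuous} is \emph{tensorized}: the drift and diffusion coefficients in the SDE for $(\tildebxit,\tildemit)$ depend only on the \emph{deterministic} function $\fbarmut$ supplied by the mean-field PDE~\eqref{eqn:mean_field_pde}, and not on the other agents. Hence, starting from i.i.d.\ initial samples $\{(\tilde{\bx}^j_0,\tilde{m}^j_0)\}\sim\muat{0}$ and independent Brownian motions $\{\Wit\}$, the full trajectories $\{(\tildebxit,\tildemit)\}_{j=1}^N$ are i.i.d.\ across $j$. My first step is to identify their common marginal law with $\mut$: applying It\^o's formula to a smooth test function $\phi(\tilde{\bx},\tilde m)$ and taking expectation, the law $\nu_t$ of $(\tildebxit,\tildemit)$ satisfies the \emph{linear} Fokker--Planck equation obtained from~\eqref{eqn:mean_field_pde} by freezing the provisional minimum at the prescribed $\fbarmut$. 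Since $\mut$ itself solves the same linear equation with the same initial data, uniqueness for this linear PDE --- which holds under the Lipschitz hypotheses in Assumption~\ref{assumption: assmptn2} --- forces $\nu_t=\mut$.

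With this identification in hand, the first assertion $W_2(\mut,\tildemuNt)\to 0$ in probability reduces to the classical statement that the empirical measure of $N$ i.i.d.\ samples from a probability measure converges in $W_2$ to that measure (Varadarajan, or quantitatively Fournier--Guillin). The only precondition is a finite moment of $\mut$ of order strictly greater than $2$. Finite $\bx$-moments follow from the Langevin-type SDE $\nd\tildebxit=-\nabla\f(\tildebxit)\dt+\sqrt{2\gammab(\tildemit)}\dWit$ with Lipschitz drift and uniformly bounded diffusion, via a standard Gr\"onwall argument. Finite $m$-moments follow by solving the mass ODE along sample paths,
\[
\tildemit=\tilde{m}^j_0\exp\!\Bigl(-\int_0^t\!\bigl(\f(\tilde{\bx}^j_s)-\fbarmuat{s}\bigr)\nd s\Bigr)\leq \tilde{m}^j_0\,e^{Dt},
\]
where the bound uses $0\leq \f\leq D$ and therefore $0\leq \fbarmu\leq D$ from Assumption~\ref{assumption: assmptn2}, together with a finite second moment of $\muat{0}$ in the $m$-variable.

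For the second assertion, mass conservation $\mathbb{E}_{\mut}[m]=1$ rewrites $\tildefbarNt-\fbarmut$ as a centered i.i.d.\ average
\[
\tildefbarNt-\fbarmut \;=\; \frac{1}{N}\sum_{j=1}^N Z^j_t-\mathbb{E}[Z^1_t],\qquad Z^j_t:=\tildemit\f(\tildebxit),
\]
whose common mean equals $\mathbb{E}_{\mut}[m\f(\bx)]=\fbarmut$. Jensen's inequality followed by the usual Monte Carlo variance bound yields
\[
\mathbb{E}\bigl[|\tildefbarNt-\fbarmut|\bigr]\leq \frac{\sqrt{\mathrm{Var}(Z^1_t)}}{\sqrt N}\leq \frac{D\,\sqrt{\mathbb{E}[(\tildemit)^2]}}{\sqrt N}\leq \frac{C_t}{\sqrt N},
\]
with $C_t=D\,e^{Dt}\sqrt{\mathbb{E}[(\tilde m^1_0)^2]}$, using $|\f|\leq D$ and the pathwise bound $\tildemit\leq \tilde m^j_0\,e^{Dt}$ just derived.

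The main technical obstacle I anticipate is step one: verifying uniqueness for the linear Fokker--Planck equation driving $\nu_t$. The $m$-transport term $\partial_m\bigl((\f(\bx)-\fbarmut)m\,\nu\bigr)$ has a coefficient that is unbounded in $m$, so a careful estimate in a weighted norm --- or a direct analysis of the characteristics in the $m$-variable, which are exponential in time --- is needed to prevent mass from escaping to $m=\infty$ and to guarantee a unique weak solution. A secondary concern is that the Monte Carlo bound requires a finite second moment of $\muat{0}$ in the $m$-variable; this is natural given the normalization $\iint m\,\nd\muat{0}=1$ but is not explicit in Assumption~\ref{assumption: assmptn2} and should be added as a hypothesis or derived from it.
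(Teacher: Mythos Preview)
Your proposal is correct and takes essentially the same approach as the paper: recognize that the auxiliary particles are i.i.d.\ with common law $\mu_t$, then invoke Fournier--Guillin for the $W_2$ convergence and a Monte Carlo variance bound for the $1/\sqrt N$ rate on the provisional minimum. You are considerably more careful than the paper's (very terse) proof about justifying the identification step via uniqueness of the linear Fokker--Planck equation and about the required moment conditions; your concern about a finite second $m$-moment is in fact handled by the bounded-support clause of Assumption~\ref{assumption: assmptn2}, which (despite its notation) is used elsewhere in the paper to mean $m^j_0\leq M_{\max}$.
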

and
\begin{lemma}\label{lem:aux_sde}
Suppose Assumption~\ref{assumption: assmptn2} holds. Let $\{\tildebxit\,,\tildemit\}_{j=1}^N$ be the system defined in \eqref{eqn:swarm_continuous}, and let $\{{\bx}^{j}_{t}\,,{m}^j_t\}_{j=1}^N$ solve~\eqref{eq:SSA}. Suppose the two systems have the same initial conditions, then
\begin{itemize}
    \item $\muN$ and $\tildemuNt$ are close in Wasserstein sense:
     \begin{equation}\label{eqn:mean_field_w2}
    W_{2}(\muN,\tildemuNt) \to 0\,, \quad \text{in probability as }N\to \infty\,.
    \end{equation}
    \item There exists a constant $C= C(t, d, \muat{0}, \f)$ independent of $N$ such that $\tildefbarNt$ converges to $\fbarNt$ with the rate of
      \begin{equation}\label{eqn:meanfield_objective}
    \mathbb{E}(|\tildefbarNt - \fbarNt|) < \frac{C}{\sqrt{N}}\,.
    \end{equation}
    Here $\tildefbarNt$ and $\fbarNt$ are defined in~\eqref{eqn:aux_continuous} and~\eqref{eqn:ave_stochastic}.
\end{itemize}
\end{lemma}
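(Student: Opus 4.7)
\medskip
\noindent\textbf{Proof plan for Lemma \ref{lem:aux_sde}.}
The plan is to run a synchronous coupling between the original swarm system \eqref{eq:SSA} and the auxiliary system \eqref{eqn:swarm_continuous}: share the same initial data $\{\bxi_0,\mi_0\}$ and drive both systems by the same Brownian motions $\{\Wit\}$. Define the componentwise errors $E^j_\bx(t):=\mathbb{E}|\bxit-\tildebxit|^2$ and $E^j_m(t):=\mathbb{E}|\mitt-\tildemit|^2$. The goal is to establish $\sup_{j}\bigl(E^j_\bx(t)+E^j_m(t)\bigr)\le C_t/N$ on any finite interval. Once this is in hand, the $W_2$ estimate in \eqref{eqn:mean_field_w2} follows immediately because the coupling produces an explicit transport plan, giving $\mathbb{E}\,W_2^2(\muN,\tildemuNt)\le \tfrac{1}{N}\sum_j (E^j_\bx+E^j_m)$, and a Markov-type argument then yields convergence in probability.

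I would next obtain the Gronwall system for $E^j_\bx$ and $E^j_m$. Applying It\^o to $|\bxit-\tildebxit|^2$ and using the Lipschitz bounds on $\nabla\f$ and $\sqrt{\gammab}$ from Assumption \ref{assumption: assmptn2} gives
\begin{equation*}
\tfrac{d}{dt}E^j_\bx(t)\;\le\; C_1\,E^j_\bx(t)+C_2\,E^j_m(t).
\end{equation*}
For the mass equation I would expand
\begin{equation*}
\tfrac{d}{dt}(\mitt-\tildemit)=-(\mitt-\tildemit)\f(\bxit)-\tildemit\bigl(\f(\bxit)-\f(\tildebxit)\bigr)+(\mitt-\tildemit)\fbarNt+\tildemit\bigl(\fbarNt-\fbarmut\bigr),
\end{equation*}
bound $|\f|,|\fbarNt|\le D$, use Lipschitzianity of $\f$, and control $\mathbb{E}[(\tildemit)^p]$ via the explicit representation $\tildemit=\mi_0\exp\!\bigl(-\int_0^t(\f(\tildebxit[s])-\fbarmu(s))\,\mathrm{d}s\bigr)\le \mi_0 e^{2Dt}$. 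This produces
\begin{equation*}
\tfrac{d}{dt}E^j_m(t)\;\le\; C_3\bigl(E^j_\bx(t)+E^j_m(t)\bigr)+C_4\,\mathbb{E}|\fbarNt-\fbarmut|^2.
\end{equation*}

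\medskip
The crux is controlling $\mathbb{E}|\fbarNt-\fbarmut|^2$ in terms of the particle errors themselves and a $1/N$ residual. I would split through the auxiliary minimum:
\begin{equation*}
|\fbarNt-\fbarmut|\;\le\;|\fbarNt-\tildefbarNt|+|\tildefbarNt-\fbarmut|,
\end{equation*}
where the second term is $O(1/\sqrt N)$ by Lemma \ref{lem:aux_mean} (upgrading to an $L^2$ bound is routine since $\tildefbarNt-\fbarmut$ is uniformly bounded). For the first term, write $\fbarNt=A_N/B_N$ with $A_N=\tfrac{1}{N}\sum_j\mitt\f(\bxit)$ and $B_N=\tfrac{1}{N}\sum_j\mitt\equiv \tfrac{1}{N}\sum_j\mi_0$ (conserved along \eqref{eq:SSA}), and $\tildefbarNt=\tilde A_N/\mathbb{E}_\mu[m]=\tilde A_N$ (since $\mathbb{E}_\mu[m]\equiv 1$). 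The decomposition
\begin{equation*}
\fbarNt-\tildefbarNt\;=\;\frac{A_N-\tilde A_N}{B_N}+\frac{\tilde A_N(1-B_N)}{B_N}
\end{equation*}
reduces the task to two ingredients: (i) the initial fluctuation $\mathbb{E}|1-B_N|^2=O(1/N)$ from the classical LLN on the i.i.d.\ samples $\{\mi_0\}$; and (ii) $\mathbb{E}|A_N-\tilde A_N|^2\le C\sup_j(E^j_\bx+E^j_m)$ by Lipschitzianity, boundedness of $\f$, and bounded moments of $\tildemit$. The ratio is handled by restricting to the high-probability event $\{B_N\ge 1/2\}$ and absorbing the complement using the a priori $L^\infty$ bound on $|\fbarNt-\tildefbarNt|$ (which follows from the mass estimate $\mitt\le \mi_0 e^{2Dt}$). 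Substituting back closes the system
\begin{equation*}
\tfrac{d}{dt}\bigl(E^j_\bx+E^j_m\bigr)\;\le\; C\bigl(E^j_\bx+E^j_m\bigr)+\tfrac{C'}{N},
\end{equation*}
and Gronwall yields $E^j_\bx(t)+E^j_m(t)\le C_t/N$. The second claim \eqref{eqn:meanfield_objective} then follows from Jensen's inequality applied to the bound on $\mathbb{E}|\fbarNt-\tildefbarNt|^2$.

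\medskip
The main obstacle I anticipate is the feedback loop between $\mathbb{E}|\fbarNt-\fbarmut|^2$ and the particle errors: bounding the provisional minimum gap forces us to use the coupling estimate that we are simultaneously trying to prove. This is resolved by the triangle decomposition above, where the auxiliary quantity $\tildefbarNt$ cleanly separates the \emph{concentration} error (controlled by Lemma \ref{lem:aux_mean}) from the \emph{coupling} error (controlled by $E^j_\bx,E^j_m$), so the two feed back only through constants. A secondary technical point is the control of moments of $\tildemit$ uniformly in $j$ and locally in $t$; this relies on the exponential representation and the global bound $|\f|+\sqrt{\gammab}\le D$ from Assumption \ref{assumption: assmptn2}, together with finite-moment assumptions on $\mu_0$ in the $m$-variable (implicit in the setup of Theorem \ref{thm:mean_field_thm}).
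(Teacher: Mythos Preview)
Your proposal is correct and follows essentially the same approach as the paper: synchronous coupling driven by shared Brownian motions, It\^o's formula for the position error, a direct differential for the mass error, conditioning on the high-probability event $\{B_N\ge 1/2\}$ to control the denominator, and closing a Gronwall inequality with an $O(1/N)$ forcing coming from the law of large numbers on the i.i.d.\ auxiliary particles. The only cosmetic difference is that you route the mean-field term through the triangle decomposition $|\fbarNt-\fbarmut|\le |\fbarNt-\tildefbarNt|+|\tildefbarNt-\fbarmut|$ and invoke Lemma~\ref{lem:aux_mean} for the second piece, whereas the paper expands the same difference directly into five sub-terms inside the computation of $\tfrac{d}{dt}|\mitt-\tildemit|^2$; both routes yield the same residual structure and the same closed Gronwall system.
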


With these two lemmas, Theorem~\ref{thm:mean_field_thm} is a straightforward corollary, calling triangle inequalities:
\[
W_2(\mut,\muN) \leq W_2(\mut,\tildemuNt)+W_2(\tildemuNt,\muN)\,,
\] 
and
\[
\mathbb{E}(|\fbarmut - \fbarNt|)\leq \mathbb{E}(|\fbarmut-\tildefbarNt|)+\mathbb{E}(|\tildefbarNt - \fbarNt|)\,.
\]
Here we omit the details and proceed to provide the proof of the two lemmas.

\begin{proof}[Proof of Lemma~\ref{lem:aux_mean}]
Calling Theorem 1 in~\cite{Fournier2015},
\[
\mathbb{E}\left(W_2(\mut,\tildemuNt)\right)=\mathcal{O}(N^{-\theta}) \to 0\,,\quad\text{as}\quad N\to\infty\,,
\]
with $\theta=\max\{1/2,1/d\}$. This implies
    \[
    W_{2}(\mut,\tildemuNt) \to 0\,, \quad \text{in probability as }N\to \infty\,.
    \]
For weak convergence, we note that $(\widetilde{\bx},\widetilde{m})$ are i.i.d. samples from $\mu$ and thus:
\[
\mathbb{E}\left(\left|\frac{1}{N}\sum_{j=1}^N\tildemit \f\left(\tildebxit\right)-\int m\f(\bx)\dmutxm\right|\right)<\frac{\mathrm{Var}^{1/2}_{\mu}(m\f)}{\sqrt{N}}\leq \frac{C(t)M}{\sqrt{N}}\,.
\]
\end{proof}

The proof for Lemma~\ref{lem:aux_sde} is the standard coupling method. This amounts to comparing the original SDE~\eqref{eq:SSA} with the auxiliary SDE~\eqref{eqn:swarm_continuous}. In particular, we note that the $\bx$ component of the two systems are almost identical:
\[
\bxit = -\nabla\f(\bxit)\dt+\sqrt{2\sigma(\mitt)}\dWit\,,\quad\text{vs}\quad\tildebxit = -\nabla\f(\tildebxit)\dt +\sqrt{2\sigma(\tildemit)}\dWit\,.
\]
Then we can subtract one from the other and trace the growth of the error. The same strategy is applied to tracking $\mitt-\tildemit$. The combination of the two pieces of error gives the control of the Wasserstein distance of $W_2(\muN,\tildemuNt)$.

\begin{proof}[Proof of Lemma~\ref{lem:aux_sde}]
Noting that $\muN = \frac{1}{N}\sum_j\delta_{\bxit}(\bx)\otimes \delta_{\mitt}(m)$ and $\tildemuNt = \frac{1}{N}\sum_j\delta_{\tildebxit}(\bx)\otimes \delta_{\tildemit}(m)$, then recall the definition of the Wasserstein metric, we have:
\begin{equation}\label{eqn:bound_w_ensemble}
W_2(\tildemuNt,\muN)^2\leq \left(\frac{1}{N}\sum^N_{j=1}\left[|\bxit-\tildebxit|^2+\left|\mitt-\tildemit\right|^2\right]\right)\,.
\end{equation}
Define:
\[
\Omega=\left\{\left|\frac{1}{N}\sum_{j=1}^N m^j_0-\mathbb{E}_{\mu_0}(m)\right|\leq \frac{1}{2}\right\}\,,
\]
With Chebyshev's inequality and boundedness of $m$ from Assumption~\ref{assumption: assmptn2}, we obtain
\[
\mathbb{P}\left(\Omega^c\right)\leq \frac{4\mathbb{E}_{\mu_0}(m^2)}{N}\leq \frac{C}{N}\,,\quad\text{and thus accordingly}\quad \mathbb{P}\left(\Omega\right)\geq 1-\frac{C}{N}\geq \frac{1}{2}\,.
\]
Define
\begin{equation}\label{eqn:def_G}
G(t) = \mathbb{E}\left(\frac{1}{N}\sum^N_{j=1}\left[|\bxit-\tildebxit|^2+\left|\mitt-\tildemit\right|^2\right]\middle| \Omega\right)\,.
\end{equation}
If we can show $G(t)\sim 1/N$, then conditioned on $\Omega$, a set whose probability goes to $1$, $W_2(\tildemuNt,\muN)^2\to0$ as $N\to\infty$, proving~\eqref{eqn:mean_field_w2}.

Similarly, to estimate~\eqref{eqn:meanfield_objective}, we note:
\begin{equation}\label{eqn:bound:mean_obj_ensemble}
\begin{aligned}
&|\tildefbarNt - \fbarNt| \\
\leq &\left|\frac{\frac{1}{N}\sum^N_{j=1}\left( \tildemit \f\left(\tildebxit\right)-\mitt \f(\bxit)\right)}{\mathbb{E}_{\mu}[m]}\right|+\left|\left(\frac{1}{N}\sum^N_{j=1}\mitt \f(\bxit) \right)\left(\frac{1}{\mathbb{E}_{\mu}[m]}-\frac{1}{\frac{1}{N}\sum m^j_t}\right)\right| \\
\underset{\rm (i)}{\leq} & C\left[\left|\frac{1}{N}\sum^N_{j=1}\left(\mitt-\tildemit\right)\f(\tildebxit)  + \frac{1}{N}\sum^N_{j=1}\mitt\left(\f(\bxit)-\f\left(\tildebxit\right)\right)\right|+\left|\frac{\frac{1}{N}\sum m^j_t-\mathbb{E}_{\mu}[m]}{\mathbb{E}_{\mu}[m]\left(\frac{1}{N}\sum m^j_t\right)}\right|\right]\\
\underset{\rm (ii)}{=} &C\left[\left|\frac{1}{N}\sum^N_{j=1}\left(\mitt-\tildemit\right)\f(\tildebxit)  + \frac{1}{N}\sum^N_{j=1}\mitt\left(\f(\bxit)-\f\left(\tildebxit\right)\right)\right|+\left|\frac{\frac{1}{N}\sum m^j_0-\mathbb{E}_{\mu_0}[m]}{\mathbb{E}_{\mu_0}[m]\left(\frac{1}{N}\sum m^j_0\right)}\right|\right]\\
\underset{\rm (iii)}{\leq} & C\left[\sqrt{\frac{1}{N}\sum^N_{j=1}\left[|\bxit-\tildebxit|^2+\left|\mitt-\tildemit\right|^2\right]}+\left|{\frac{1}{N}\sum m^j_0-\mathbb{E}_{\mu_0}[m]}\right|\right]\,,
\end{aligned}
\end{equation}
Here in (i) we used the upper boundedness of both $m$\footnote{Firstly we notice that, according to the Boundedness in $m$ condition in Assumption~\ref{assumption: assmptn2}, the initial data for $\mitt$ is bounded by $M_{\max}$, meaning $|m^j_0|<M_{\max}$ for all $j$ at the initial time. Given that $|\f| \leq D$, then according to the $\mitt$ equation in~\eqref{eq:SSA}, we call Gr\"onwall inequality to obtain that
\begin{equation}\label{eqn:bound_masses}
|\mitt|\leq M_{\max}e^{2Dt}
\end{equation}
for all time. The same argument applies to bound $\tildemit$. We denote the upper bound $c$ that depends on $M_{\max}$, $D$ and $t$.
\label{foot:bound_mass_foot}} and $\f$. In (ii), we apply the conservation of mass: $\sum m^j_t = \sum m^j_0$ and $\mathbb{E}_{\mu}[m] = \mathbb{E}_{\mu_0}[m]$. In (iii), we used the triangle inequality, Cauchy-Schwarz inequality, and Lipschitz continuity of $\f$. Consequently, the constant $C$ depends on $D$, $L$, and $M_{\max}$ (from Assumption~\ref{assumption: assmptn2}).

To insert it to~\eqref{eqn:meanfield_objective}, 
then:
\[
\mathbb{E}(|\tildefbarNt - \fbarNt|) \leq \mathbb{E}\left(|\tildefbarNt - \fbarNt| \big| \Omega\right)\mathbb{P}(\Omega) + \mathbb{E}\left(|\tildefbarNt - \fbarNt| \big| \Omega^c\right)\mathbb{P}(\Omega^c)\,.
\]

So combining with~\eqref{eqn:bound:mean_obj_ensemble}, we rewrite~\eqref{eqn:meanfield_objective}:
\begin{equation}\label{eqn:mean_obj_ensemble}
\begin{aligned}
    &\mathbb{E}(|\tildefbarNt - \fbarNt|) \\
    \leq &\mathbb{E}\left(|\tildefbarNt - \fbarNt| \big| \Omega\right) + \frac{C}{N} \\
   \leq &C\underbrace{\mathbb{E}\left(\sqrt{\frac{1}{N}\sum^N_{j=1}\left[|\bxit-\tildebxit|^2+\left|\mitt-\tildemit\right|^2\right]}\middle| \Omega\right)}_{\leq \sqrt{G(t)}}+C\underbrace{\mathbb{E}\left(\left|\frac{1}{N}\sum m^j_0-\mathbb{E}_{\mu_0}[m]\right|\middle| \Omega\right)}_{\leq \frac{1}{\mathbb{P}(\Omega)}\mathbb{E}\left(\left|\frac{1}{N}\sum m^j_0-\mathbb{E}_{\mu_0}[m]\right|\right)\leq \frac{C}{\sqrt{N}}}+\frac{C}{N}\,.
    \end{aligned}
\end{equation}
Once again,
if we can show $G(t)\sim 1/N$,~\eqref{eqn:meanfield_objective} is complete.

The rest of the proof is dedicated to showing the bound for $G(t)$. To do so, we will trace the evolution of $|\bxit-\tildebxit|^2$ and $|\mitt-\tildemit|^2$ respectively. 
\begin{itemize}
    \item To trace $|\bxit-\tildebxit|^2$, we compare the $\bxit$-equation in~\eqref{eq:SSA} and the $\tildebxit$-equation in~\eqref{eqn:swarm_continuous} to have:
\[
g(\bx) = |\bx|^{2}, \quad \nabla_\bx g(\bx) = 2\bx, \quad D^2_\bx g = 2{\mathbb I}\,.
\]
Then $g(\bxit - \tildebxit) = |\bxit - \tildebxit|^{2} = \langle \bxit - \tildebxit, \bxit - \tildebxit\rangle$, and with It\^o's formula we get
\begin{align*}
\nd g(\bxit - \tildebxit) & = \left(\langle \nabla g,-(\nabla \f(\bxit)-\nabla \f\left(\tildebxit\right))\rangle + \text{Tr}\left(\left(\sqrt{\gammab(\mitt)} - \sqrt{\gammab\left(\tildemit\right)}\right)^{2}D^2_{\bx}g\right)\right)\dt \\
& \quad + \left\langle \nabla g, \sqrt{2\gammab(\mitt)}-\sqrt{2\gammab\left(\tildemit\right)}\right\rangle\dWit \\
& = \left(-2\langle \bxit - \tildebxit, \nabla \f(\bxit) - \nabla \f\left(\tildebxit\right)\rangle + 2d\left|\sqrt{\gammab(\mitt)} - \sqrt{\gammab\left(\tildemit\right)}\right|^{2}\right)\dt \\
& \quad + 2\left\langle \bxit - \tildebxit, \sqrt{2\gammab(\mitt)} - \sqrt{2\gammab\left(\tildemit\right)} \right\rangle\dWit\,.
\end{align*}
Taking the expectation of the whole formula, the mean of the Brownian motion gets dropped out and we have:
\begin{equation}\label{eqn:x-y_evolve}
\begin{aligned}
    \ddt\mathbb{E}[g(\bxit - \tildebxit)|\Omega]&=\ddt\mathbb{E}[|\bxit - \tildebxit|^{2}|\Omega] \leq 2(L+L^2)\left(\mathbb{E}[|\bxit - \tildebxit|^{2}|\Omega] + d\mathbb{E}[|\mitt-\tildemit|^{2}|\Omega]\right)\,,
    \end{aligned}
\end{equation}
where we used Assumption~\ref{assumption: assmptn2} that both $\nabla\f$ and $\sqrt{\gammab}$ are $L$-Lipschitz.
\item To trace $|\mitt-\tildemit|^2$, we compare the $m$-equation in~\eqref{eq:SSA} and the $m$-equation in~\eqref{eqn:swarm_continuous} to have:
\begin{equation}\label{eqn:m-n_evolve_prepare1}
\begin{aligned}
    \ddt |\mitt - \tildemit|^{2} & = -2(\mitt - \tildemit)(\mitt \f(\bxit) - \tildemit \f(\tildebxit)) \\
    & \quad + 2(\mitt-\tildemit)\left(\frac{\frac{1}{N}\sum_{i=1}^{N}\miit \f(\bxiit)}{\frac{1}{N}\sum^N_{i=1}m^i_t} - \frac{\mathbb{E}_{\mu}[mF(\bx)]}{\mathbb{E}_{\mu}(m)}\right)\\
    & = -2(\mitt - \tildemit)^{2}\f(\bxit)- 2(\mitt - \tildemit)\tildemit\left(\f(\bxit)-\f(\tildebxit)\right) \\
    & \quad +2(\mitt-\tildemit)\left(\frac{1}{N}\sum_{i=1}^{N} \miit \f(\bxiit) - \frac{1}{ N}\sum_{i=1}^{N}\tildemiit \f(\tildebxiit)\right)/\left(\frac{1}{N}\sum^N_{i=1}m^i_t\right) \\
    & \quad + 2(\mitt-\tildemit)\left(\frac{\frac{1}{N}\sum_{i=1}^{N}\tildemiit \f(\tildebxiit)}{\frac{1}{N}\sum^N_{i=1}m^i_t} - \frac{\mathbb{E}_{\mu}[m\f(\bx)]}{\mathbb{E}_{\mu}(m)}\right)\,.
    \end{aligned}
    \end{equation}
Re-order all terms, we have
\begin{equation}\label{eqn:m-n_evolve_prepare}
\begin{aligned}
    \ddt |\mitt - \tildemit|^{2} & = \underbrace{-2(\mitt - \tildemit)^{2}\f(\bxit)}_{\text{(i)}} \underbrace{- 2(\mitt - \tildemit)\tildemit(\f(\bxit)-\f\left(\tildebxit\right))}_{\text{(ii)}} \\
    & \quad \underbrace{+ 2(\mitt - \tildemit)\left(\frac{1}{N}\sum_{i=1}^{N}(\miit - \tildemiit)\f(\bxiit)\right)/\left(\frac{1}{N}\sum^N_{i=1}m^i_t\right)}_{\text{(iii)}} \\
    & \quad + \underbrace{2(\mitt - \tildemit)\left(\frac{1}{N}\sum_{i=1}^{N}\tildemiit(\f(\bxiit) - \f(\tildebxiit))\right)/\left(\frac{1}{N}\sum^N_{i=1}m^i_t\right)}_{\text{(iv)}} \\
    & \quad \underbrace{+ 2(\mitt - \tildemit)\left(\frac{\frac{1}{N}\sum_{i=1}^{N}\tildemiit \f(\tildebxiit)}{\frac{1}{N}\sum^N_{i=1}m^i_t} - \frac{\mathbb{E}_{\mu}[m\f(\bx)]}{\mathbb{E}_{\mu}(m)}\right)}_{\text{(v)}}\,.
\end{aligned}
\end{equation}
We analyze each term separately,
\begin{enumerate}[label=(\roman*)]
    \item According to Assumption~\ref{assumption: assmptn2}, $\f(\bxit) \geq 0$ so,
     $\displaystyle   -2(\mitt-\tildemit)^{2}\f(\bxit) \leq 0$.
    \item Using Young's inequality,
    \begin{equation*}
    \begin{split}
    -2(\mitt-\tildemit)\tildemit(\f(\bxit)-\f\left(\tildebxit\right)) 
    & \leq  \tildemit(|\mitt-\tildemit|^{2} + |\f(\bxit) - \f\left(\tildebxit\right)|^{2}) \\
    & \leq  c[|\mitt-\tildemit|^{2} + L^{2}|\bxit - \tildebxit|^{2}]\,,
    \end{split}
    \end{equation*}
    where $c$ comes from the boundedness of $m$, seen in Footnote~\ref{foot:bound_mass_foot} and $L$ is the Lipschitz constant from Assumption~\ref{assumption: assmptn2}.
    \item Because of the total mass preserving and the definition of $\Omega$, we have
    \[
    \frac{1}{N}\sum^N_{i=1}m^i_t=\frac{1}{N}\sum^N_{i=1}m^i_0\geq \frac{1}{2}\,.
    \]
    Thus,
    \begin{equation*}
    \begin{split}
     &2(\mitt - \tildemit)\left(\frac{1}{N}\sum_{i=1}^{N}(\miit - \tildemiit)\f(\bxiit)\right)/\left(\frac{1}{N}\sum^N_{i=1}m^i_t\right)\\
      \leq & 2|\mitt - \tildemit|^{2} + 2\left(\frac{1}{N}\sum_{i=1}^{N}(\miit - \tildemiit)\f(\bxiit)\right)^{2} \\
      \leq  & D^2\left[|\mitt - \tildemit|^{2} + \left(\frac{1}{N}\sum_{i=1}^{N}(\miit - \tildemiit)\right)^{2}\right] \\
     \leq  & D^2\left[|\mitt - \tildemit|^{2} + \frac{1}{N}\sum_{i=1}^{N}|\miit - \tildemiit|^{2}\right]\,,
    \end{split}
    \end{equation*}
    where we used Young's in the first inequality, boundedness of $\f$ in the second (following Assumption~\ref{assumption: assmptn2}), and the H\"older's inequality in the last.
    \item Using the same argument as in (iii) we get
    \begin{equation*}
    \begin{split}
    & 2(\mitt - \tildemit)\left(\frac{1}{N}\sum_{i=1}^{N}\tildemiit(\f(\bxiit) - \f(\tildebxiit))\right)/\left(\frac{1}{N}\sum^N_{i=1}m^i_t\right) \\
    \leq & 2|\mitt - \tildemit|^{2} + 2\left(\frac{1}{ N}\sum_{i=1}^{N}\tildemiit(\f(\bxiit) - \f(\tildebxiit))\right)^{2}\\
    \leq & c\left[|\mitt - \tildemit|^{2} + \left(\frac{1}{ N}\sum_{i=1}^{N}(\f(\bxiit)-\f(\tildebxiit))\right)^{2}\right] \\
    \leq & c\left[|\mitt - \tildemit|^{2} + \frac{1}{ N}\sum_{i=1}^{N}|\f(\bxiit) - \f(\tildebxiit)|^{2}\right] \\
    \leq & c\left[|\mitt - \tildemit|^{2} + \frac{L^{2}}{N}\sum_{i=1}^{N}|\bxiit - \tildebxiit|^{2}\right]
    \end{split}
    \end{equation*}
    where we used Young's inequality in the first inequality, boundedness of $\tildemit$ in the second (see Footnote~\ref{foot:bound_mass_foot}), H\"older's in the third, and finally the Lipschitz continuity of $\f$ in the last inequality.
    \item Using the same argument,
    \begin{align*}
    &2(\mitt - \tildemit)\left(\frac{\frac{1}{N}\sum_{i=1}^{N}\tildemiit \f(\tildebxiit)}{\frac{1}{N}\sum^N_{i=1}m^i_t} - \frac{\mathbb{E}_{\mu}[m\f(\bx)]}{\mathbb{E}_{\mu}(m)}\right) \\ 
    = &2(\mitt - \tildemit)\left(\frac{1}{N}\sum_{i=1}^{N}\tildemiit \f(\tildebxiit) - \mathbb{E}_{\mu}[m\f(\bx)]\right)/\left(\frac{1}{N}\sum^N_{i=1}m^i_t\right)\\
    &+2(m^j_t-\widetilde{m}^i_t)\mathbb{E}_{\mu}[m\f(\bx)]\left(\frac{1}{\frac{1}{N}\sum^N_{i=1}m^i_t}-\frac{1}{\mathbb{E}_{\mu}(m)}\right)\\
    \leq & 4|\mitt - \tildemit|^{2}  + 2\left|\frac{1}{N}\sum_{i=1}^{N}\tildemiit\f(\tildebxiit) - \mathbb{E}_{\mu}[m\f(\bx)]\right|^{2}+2D^2\left|\frac{1}{N}\sum^N_{i=1}m^i_0 - \mathbb{E}_{\mu_0}(m)\right|^2\,,
    \end{align*}
where we use $\frac{1}{N}\sum^N_{i=1}\miit=\frac{1}{N}\sum^N_{i=1}m^i_0\geq\frac{1}{2}$, $\mathbb{E}_{\mu}(m)=\mathbb{E}_{\mu_0}(m)$, and boundedness of $\f$ in the last inequality.
\end{enumerate}
Collecting these terms together to feed in~\eqref{eqn:m-n_evolve_prepare}, and sum up against $j$, we have:
\begin{equation}\label{eqn:m-n_evolve}
\begin{aligned}
    &\ddt\left(\mathbb{E}\left[\frac{1}{N}\sum_{j=1}^{N}|\mitt - \tildemit|^{2}\middle|\Omega\right]\right)\\
    &\leq \widetilde{C}\mathbb{E}\left[\frac{1}{N}\sum_{j=1}^{N}|\bxit - \tildebxit|^{2} + \frac{1}{N}\sum_{j=1}^{N}|\mitt - \tildemit|^{2}\middle|\Omega\right] \\
    & \quad + \widetilde{C}\mathbb{E}\left[\left|\frac{1}{N}\sum_{i=1}^{N}\tildemiit\f(\tildebxiit) - \mathbb{E}_{\mu}[m\f(\bx)]\right|^{2}+\left|\frac{1}{N}\sum^N_{i=1}m^i_0 - \mathbb{E}_{\mu_0}(m)\right|^2\middle|\Omega\right]\,,
     \end{aligned}
\end{equation}
where $\widetilde{C}$ depends on $D$, $M_{\max}$, $t$ and $L$.
\end{itemize}
Recall the definition of $G$ in~\eqref{eqn:def_G}. We collect~\eqref{eqn:x-y_evolve} and~\eqref{eqn:m-n_evolve} to have:
\begin{equation}\label{eqn:expectancy_bound}
\begin{aligned}
\ddt G(t)
& \leq  C(t)\mathbb{E} \left[\frac{1}{N}\sum_{j=1}^{N}|\bxit - \tildebxit|^{2}+ \frac{1}{N}\sum_{j=1}^{N}|\mitt - \tildemit|^{2}\middle|\Omega\right] \\
 & \qquad + C(t)\underbrace{\mathbb{E}\left[\left|\frac{1}{N}\sum_{i=1}^{N}\tildemiit\f(\tildebxiit) - \mathbb{E}_{\mu}[m\f(\bx)]\right|^{2}+\left|\frac{1}{N}\sum^N_{i=1}m^i_0 - \mathbb{E}_{\mu_0}(m)\right|^2\middle|\Omega\right]}_{\text{(vi)}}\\
& =  {C}(t)\left(G(t) + \text{\text{(vi)}}\right),
\end{aligned}
\end{equation}
where ${C}(t)$ depends on $D$, $M_{\max}$, $t$, $L$ and $d$. Noting that $\text{term (vi)}$ can be further controlled using the law of large numbers: for the first term, let $X=m\f(\bx)$ be a random variable. Then, $\tildemit \f\left(\tildebxit\right)$ is a realization of $X$ and we have
\[
\overline{X}_N = \frac{1}{N}\sum_{j=1}^{N} \tildemit \f(\tildebxit)\,.
\]
Since $\{(\tildebxit,\tildemit)\}_{j=1}^{N}$ are i.i.d. samples of $\mu$, we have
\[
\mathbb{E}[|\overline{X}_N - \mathbb{E}[X]|^{2}|\Omega] \leq \mathbb{E}[|\overline{X}_N - \mathbb{E}[X]|^{2}]/\mathbb{P}(\Omega)\leq \text{Var}[\overline{X}_N]/\mathbb{P}(\Omega) \leq \frac{1}{N}\text{Var}[X]/\mathbb{P}(\Omega) = \mathcal{O}\left(\frac{1}{N}\right)\,,
\]
where we use $\mathbb{P}(\Omega)\geq 1 - \frac{C}{N}$. Similar bound also holds for the second term in (vi). Combining with~\eqref{eqn:expectancy_bound}, we have:
\begin{equation*}   \ddt G(t)  \leq  C(t)G(t) + \mathcal{O}\left(\frac{1}{N}\right) \quad\text{with}\quad G(0) =  0\,.
\end{equation*}
Finally, calling Gr\"onwall's inequality, we have
$G(t) \leq \mathcal{O}\left(\frac{1}{N}\right),$
which completes the proof.
\end{proof}

\subsection{Large time behavior}\label{sec:long_time_proof}
This section is dedicated to the proof of Theorem~\ref{thm:long_time_thm}. This theorem quantifies the large time behavior of the PDE~\eqref{eqn:mean_field_pde}. We note that~\eqref{eqn:f_liminf} and~\eqref{eqn:f_exist} are asymptotic and rate-specific respectively. Both proofs are by contradiction. To start off, we first present two lemmas.
\begin{lemma}\label{lem:long_time_lemma_1}
Under the condition of Theorem~\ref{thm:long_time_thm}, assume $\liminf_{t\rightarrow\infty}\fbarmut>\f_*$, then: there exists $\epsilon_0>0$ such that for any $t\geq0$, we have $\fbarmut>\f_*+\epsilon_0$.
\end{lemma}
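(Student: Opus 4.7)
\smallskip
\noindent\emph{Proof plan.} My approach is by contradiction via a dichotomy on a minimizing sequence. The starting observation is that $\fbarmut\geq \f_*$ automatically for every $t\geq 0$: since $\f(\bx)\geq \f_*$ on $\mathrm{supp}(\mut)$ and mass conservation gives $\iint m\dmutxm=1$, the quantity $\fbarmut$ is a weighted average of values $\geq \f_*$. Consequently, negating the lemma's conclusion is equivalent to $\inf_{t\geq 0}\fbarmut=\f_*$, so I pick a sequence $\{t_n\}_{n\geq 1}\subset[0,\infty)$ with $\fbarmuat{t_n}\to \f_*$. Either (i) a subsequence satisfies $t_n\to\infty$, or (ii) $\{t_n\}$ is bounded and a subsequence converges to some $t^\ast\in[0,\infty)$.

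The escape case (i) is immediate: by the definition of $\liminf$,
\[
\liminf_{t\to\infty}\fbarmut \leq \lim_{n}\fbarmuat{t_n}=\f_*,
\]
contradicting the standing hypothesis $\liminf_{t\to\infty}\fbarmut>\f_*$.

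In the bounded case (ii), I first establish continuity of $t\mapsto \fbarmut$. This should follow from the weak continuity of the mean-field $\mut$ solving~\eqref{eqn:mean_field_pde}, the boundedness of $\f$ in Assumption~\ref{assumption: assmptn2}, and the a priori bound $m(t)\leq M_{\max}e^{2Dt}$ obtained from the $m$-equation in~\eqref{eqn:mean_field_pde} (cf.\ footnote~\ref{foot:bound_mass_foot}). Continuity then yields $\fbarmuat{t^\ast}=\f_*$, i.e.,
\[
\iint m(\f(\bx)-\f_*)\dmuxmat{t^\ast}=0,
\]
and since the integrand is nonnegative with $\f(\bx)>\f_*$ for $\bx\neq \bx_*$ (uniqueness in Assumption~\ref{assumption: assmptn1}), the mass-weighted measure $m\dmuxmat{t^\ast}$ must be supported on $\{\bx=\bx_*\}$. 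To close the argument, I analyze~\eqref{eqn:mean_field_pde} forward from this degenerate datum: at $\bx=\bx_*$ both the drift $\nabla_\bx\cdot(\mu\nabla\f)$ and the mass-transfer factor $(\f-\fbarmu)$ vanish when $\fbarmu=\f_*$; for leader mass at $m>m_c$ the diffusion $\gammab(m)\Delta_\bx\mu$ also vanishes by Assumption~\ref{assumption: assmptn1}, so such leader mass (present by the second bullet of Assumption~\ref{assumption: assmptn1}) stays anchored at $\bx_*$. Any explorer mass at $m\leq m_c$ that diffuses outward lands where $\f>\fbarmu$ and so loses its $m$-weight at rate $-m(\f-\fbarmu)$. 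The intended conclusion is that $\liminf_{t\to\infty}\fbarmut=\f_*$ starting from $t^\ast$, again contradicting the hypothesis.

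The chief obstacle lies in making the last paragraph quantitative: I need to rigorously rule out that the PDE, starting from degenerate data where $m\dmuxmat{t^\ast}$ is concentrated on $\{\bx_*\}$, can later lift $\fbarmut$ back above $\f_*+\epsilon_1$ persistently. The natural strategy is a Grönwall/Lyapunov argument on either $\fbarmut-\f_*$ or a mass-weighted displacement functional measuring spread away from $\bx_*$, exploiting the damping $\dot m=-m(\f-\fbarmu)$ at off-optimum points and the gradient confinement near $\bx_*$. Everything else --- continuity of $\fbarmut$, the escape case, and the reduction to degenerate data --- is comparatively routine.
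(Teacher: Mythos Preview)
Your reduction is exactly the paper's: negate, get $\inf_{t\geq 0}\fbarmut=\f_*$, use the hypothesis to force the infimum to be attained at a finite time $t^\ast$, and conclude that $\muxmat{t^\ast}=\delta_{\bx_*}(\bx)\otimes\eta(m)$ for some probability measure $\eta$ on $(0,\infty)$. The gap is in what you do next. You allow for explorer mass at $m\leq m_c$ and then try to control the forward evolution by a Lyapunov argument you admit you cannot close. The paper avoids this entirely by showing that \emph{no such explorer mass exists}: necessarily $\mathrm{supp}(\eta)\subset[m_c,\infty)$. Once this is known, all three terms on the right of~\eqref{eqn:mean_field_pde} vanish at $\muxmat{t^\ast}$ (the drift because $\nabla\f(\bx_*)=0$, the mass-transfer because $\f(\bx_*)-\fbarmuat{t^\ast}=0$, the diffusion because $\gammab\equiv 0$ on $\mathrm{supp}(\eta)$), so $\muxmat{t^\ast}$ is an exact equilibrium. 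Then $\fbarmut=\f_*$ for all $t\geq t^\ast$, contradicting $\liminf_{t\to\infty}\fbarmut>\f_*$.

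The mechanism you are missing for $\mathrm{supp}(\eta)\subset[m_c,\infty)$ is a test-function trick. Set $\phi_*(\bx,m)=|\bx-\bx_*|^2\xi(\bx)$ with $\xi$ a smooth nonnegative cutoff equal to $1$ near $\bx_*$, and let $A(t)=\iint\phi_*\,\dmutxm$. Then $A(t)\geq 0$ for all $t$ and $A(t^\ast)=0$, so $t^\ast$ is a minimum and $\dot A(t^\ast)=0$. Computing $\dot A(t^\ast)$ from the weak form of~\eqref{eqn:mean_field_pde}: the $\partial_m$ term drops because $\partial_m\phi_*=0$; the drift term drops because $\nabla\f(\bx_*)=0$; what remains is
\[
0=\dot A(t^\ast)=\iint \gammab(m)\,\Delta_\bx\phi_*(\bx,m)\,\dmuxmat{t^\ast}=2d\int \gammab(m)\,\nd\eta(m),
\]
since $\Delta_\bx\phi_*(\bx_*,m)=2d$. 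As $\gammab\geq 0$, this forces $\gammab=0$ $\eta$-a.e., i.e., $\mathrm{supp}(\eta)\subset[m_c,\infty)$. This single computation replaces your entire forward-dynamics program; there is no explorer mass to track, and the ``chief obstacle'' you identified evaporates. Incidentally, your appeal to ``the second bullet of Assumption~\ref{assumption: assmptn1}'' to place leader mass at $t^\ast$ is not valid as stated --- that assumption concerns $\muat{0}$, not $\muxmat{t^\ast}$ --- but the test-function argument above delivers the stronger conclusion you actually need.
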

At the same time,
\begin{lemma}\label{lem:long_time_lemma_2}
    Under the condition of Theorem~\ref{thm:long_time_thm}, if there is an $\epsilon>0$ so that $\fbarmut>\f_*+\epsilon$ for all $t\geq 0$, then the total mass $\int\int m\dmutxm\to\infty$ as $t\to\infty$ and is not conserved.
\end{lemma}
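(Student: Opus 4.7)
The approach is to construct an auxiliary mass-weighted integral which, under the hypothesis $\fbarmut > \f_* + \epsilon$ for all $t \geq 0$, must grow without bound, and then observe that this integral is dominated by $\iint m\, \dmutxm$, forcing the total mass to diverge and thereby violating the formal conservation identity obtained by integration by parts in~\eqref{eqn:mean_field_pde}.

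Define $A(t) := \iint \chi_\bx(\bx)\,\chi_m(m)\, m\, \dmutxm$, where $\chi_\bx(\bx) := \bar\chi(\f(\bx))$ with $\bar\chi$ smooth and decreasing, $\bar\chi \equiv 1$ on $[\f_*,\f_*+\epsilon/4]$ and vanishing beyond $\f_*+\epsilon/2$, and $\chi_m$ smooth increasing with $\chi_m \equiv 0$ on $[0,m_c]$ and $\chi_m \equiv 1$ on $[2m_c,\infty)$. Two features pay off after integration by parts in~\eqref{eqn:mean_field_pde}: the identity $\nabla\chi_\bx \cdot \nabla\f = \bar\chi'(\f)\,|\nabla\f|^2 \leq 0$ makes the gradient-drift contribution to $A'(t)$ non-negative, while $\gammab(m)\chi_m(m) \equiv 0$ (by the first item of Assumption~\ref{assumption: assmptn1}) kills the diffusion contribution outright. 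The remaining mass-exchange term, after integrating by parts in $m$, becomes
\[
-\iint (\f - \fbarmut)\,\chi_\bx(\bx)\,m\,\bigl[\chi_m(m) + m\chi_m'(m)\bigr]\, \dmutxm.
\]
On $\mathrm{supp}\,\chi_\bx \subset \Nhood_{\epsilon/2}$ the hypothesis gives $\fbarmut - \f > \epsilon/2$, so the $\chi_m$-part contributes at least $(\epsilon/2)A(t)$, while the $m\chi_m'(m)$-part is supported on the bounded band $[m_c,2m_c]$ and is dominated by an $O(1)$ constant $C_0$ depending only on $m_c$, on the $L^\infty$-bound $D$ from Assumption~\ref{assumption: assmptn2}, and on the cutoffs. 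Collecting the three contributions yields the differential inequality $A'(t) \geq (\epsilon/2)A(t) - C_0$, so $A(t) \to \infty$ exponentially once $A$ passes the threshold $2C_0/\epsilon$, and since $A(t) \leq \iint m\,\dmutxm$ the total mass must diverge.

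To manufacture a time $t_0$ at which $A$ exceeds that threshold, I would use both pieces of Assumption~\ref{assumption: assmptn1}: the initial positive mass at $m > m_c$ is transported deterministically since $\gammab \equiv 0$ there, and the Brownian motion acting on light particles ($m < m_c$) spreads the $\bx$-marginal so that $\mathrm{supp}_\bx\,\mu_t$ intersects $\Nhood_{\epsilon/2}$ for every $t > 0$; the hypothesis then drives those masses to grow like $e^{\epsilon t/2}$ along characteristics, eventually crossing $m = 2m_c$ and entering $\mathrm{supp}\,\chi_m$. The main obstacle is this last stochastic/geometric step — verifying that sufficient $\mu$-measure enters and lingers in $\Nhood_{\epsilon/2}$ for its mass to reach $2m_c$, and that this inflow beats the additive loss $C_0$. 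A clean workaround is to replace $\chi_m$ by the sharp indicator $\mathbf{1}_{m > m_c}$ and work directly on the invariant set $\Nhood_{\epsilon/2}\times(m_c,\infty)$, where the diffusion vanishes, gradient descent preserves the set in $\bx$ (because $\f$ is non-increasing along $-\nabla\f$), and mass only grows in $m$; on this set the outflow is zero, the inflow is non-negative, and one obtains the cleaner inequality $A'(t) \geq (\epsilon/2)A(t)$ without additive loss, reducing the entire question to showing $A(t_0) > 0$ at some finite $t_0$, which follows from the combination of initial heavy mass, Brownian spreading of light agents into $\Nhood_{\epsilon/2}$, and the in-place exponential amplification of mass forced by the hypothesis.
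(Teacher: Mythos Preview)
Your proposal is correct, and your ``clean workaround'' --- restricting to the invariant region $\Nhood_{\epsilon/2}\times(m_c,\infty)$, on which diffusion is absent, gradient descent is confining in $\bx$, and the hypothesis forces exponential growth of the $m$-weighted mass --- is precisely the paper's argument, recast from its Lagrangian (particle-trajectory) phrasing into your Eulerian (test-function) language, with the positivity step $A(t_0)>0$ handled at the same heuristic level in both. One incidental remark: in your smoothed version the $m\chi_m'(m)$ contribution is already non-negative on $\mathrm{supp}\,\chi_\bx$ (since $\chi_m'\geq 0$ and $\fbarmut-\f>0$ there), so the additive loss $C_0$ was never needed and the inequality $A'\geq(\epsilon/2)A$ holds from the outset.
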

The proof of these two lemmas are found in Appendix~\ref{sec:appendix_b}\footnote{The two lemmas require the assumption of a unique global minimum as stated in Assumption~\ref{assumption: assmptn1}. However, in the proof in Appendix~\ref{sec:appendix_b}, we will discuss that this assumption can be relaxed, but analysis needs to be altered.}. Putting these two lemmas together, we can prove the theorem.

\begin{proof}[Proof of Theorem~\ref{thm:long_time_thm}]
We prove~\eqref{eqn:f_liminf} using proof by contradiction. This is to assume~\eqref{eqn:f_liminf} is not true, meaning $\liminf_{t\rightarrow\infty}\fbarmut>\f_*$. Then according to Lemma~\ref{lem:long_time_lemma_1}, there exists $\epsilon_0>0$ such that $\fbarmut>\f_*+\epsilon_0$ for all $t\geq 0$. Setting $\epsilon = \epsilon_0$ in Lemma~\ref{lem:long_time_lemma_2}, mass is not conserved, contradicting the property of the equation. Hence~\eqref{eqn:f_liminf} has to be true, namely:
\[
\liminf_{t\to\infty}\fbarmut = \f_*\,.
\]

To show~\eqref{eqn:f_exist}, we again deploy a proof by contradiction and the argument is rather similar to that of~\eqref{eqn:f_liminf}. Assuming that it is not true, then for all $t\leq t_{\epsilon}(m_{c})$, 
\[
\fbarmut\geq \f_* + \epsilon\,.
\]
We will show this leads to a contradiction. Define $\eta_{\epsilon}$ to be the time so that

\[
\eta_\epsilon = \arg\sup_{t\in[0,1]}\int_{\Nhood_\epsilon}\int^\infty_{m_c}m\dmutxm\,\quad\Rightarrow \quad \int_{\Nhood_\epsilon}\int^\infty_{m_c}m\dmuxmat{\eta_\epsilon}>0\,.
\]
Since the probability is non-trivial, one can sample a point in the set of $\Nhood_\epsilon\times(m_c,\infty)$. Starting with $\bx(\eta_\epsilon)\in \Nhood_\epsilon$ and $m(\eta_\epsilon)>m_c$, with the same calculation as in~\eqref{eqn:total_mass_lower}, we get:
\[
\bx(t)\in \Nhood_\epsilon,\quad m(t)>e^{\epsilon (t-\eta_\epsilon)/2}m(\eta_\epsilon) \geq e^{\epsilon (t-1)/2}m(\eta_\epsilon)
\]
for any $t>1\geq \eta_\epsilon$, which makes:
\[
\int_{\Nhood_\epsilon}\int^\infty_{m_c}m\dmutxm>e^{\epsilon (t-1)/2}\int_{\Nhood_\epsilon}\int^\infty_{m_c}m\dmuxmat{\eta_\epsilon}\,.
\]
In particular, we choose $t=t_{\epsilon}(m_{c})$, then according to the definition~\eqref{eqn:t_ep_delta}:
\[
\int_{\Nhood_\epsilon}\int^\infty_{m_c}m\dmuxmat{t_{\epsilon}(m_{c})}>e^{\epsilon (t_{\epsilon}(m_{c})-1)/2}\int_{\Nhood_\epsilon}\int^\infty_{m_c}m\dmuxmat{\eta_\epsilon}\geq 1\,.
\]
This contradicts the mass conservation:
\[
\int_{\Nhood_\epsilon}\int^\infty_{m_c}m\dmutxm\leq 1\,,\quad\forall t\geq 1\,.
\]
Hence there is a $t\leq t_{\epsilon}(m_c)$ so that $\fbarmut< \f_* + \epsilon$.
\end{proof}

\section{Numerical experiments}\label{sec:numerical_exp}
This section is dedicated to numerical results. We will showcase numerical evidence that validate theoretical statements on deriving the mean-field limit and the long time behavior of $\fbar^N$. Furthermore, we will also present some numerical findings that are yet to be understood theoretically.

All numerical experiments are ran on two classical non-convex functions: the Ackley type functions (in 1D, 2D and 10D) and the Rastrigin type functions in 1D, defined as follows
\begin{align*}
\f_{A}(\bx) & = -A\exp\left(-C\sqrt{\frac{1}{d}\sum_{i=1}^{d} x_{i}^{2}}\right) - \exp\left(\frac{1}{d}\sum_{i=1}^{d}\cos(2\pi x_{i})\right) + A + e + D,\\
\f_{R}(\bx) & = 10d + \sum_{i=1}^{d}[x_{i}^{2} - 10\cos(2\pi x_{i})] + D\,.
\end{align*}
Throughout the numerical section, we chose the following parameters \(A=20\), \(C=0.2\) and \(D=0\). The profiles of the two functions in 1D are presented in Figure \ref{fig:1D_exp}. They are both highly oscillatory, presenting abundant local minima. Moreover, the local minima for the Rastrigin function give very close function values to the global minimum, and can easily confuse samples on convergence.

\begin{figure}[ht]
\centering
\begin{subfigure}{.5\textwidth}
  \centering
  \includegraphics[width=1\linewidth]{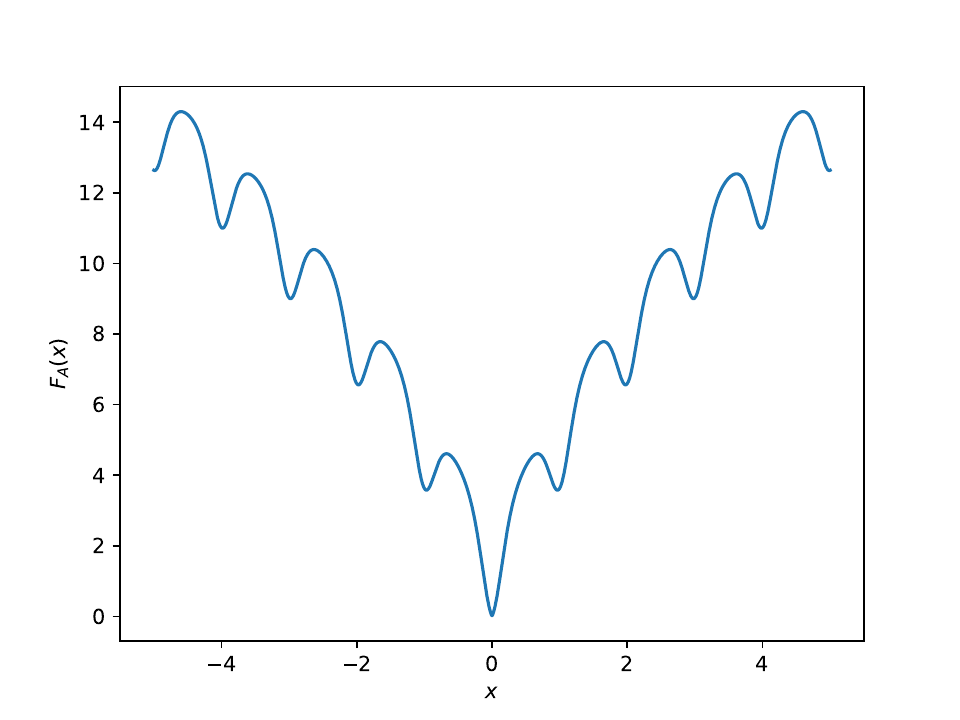}
  \caption{Ackley function.}
  \label{fig:Ackley_1D}
\end{subfigure}%
\begin{subfigure}{.5\textwidth}
  \centering
  \includegraphics[width=1\linewidth]{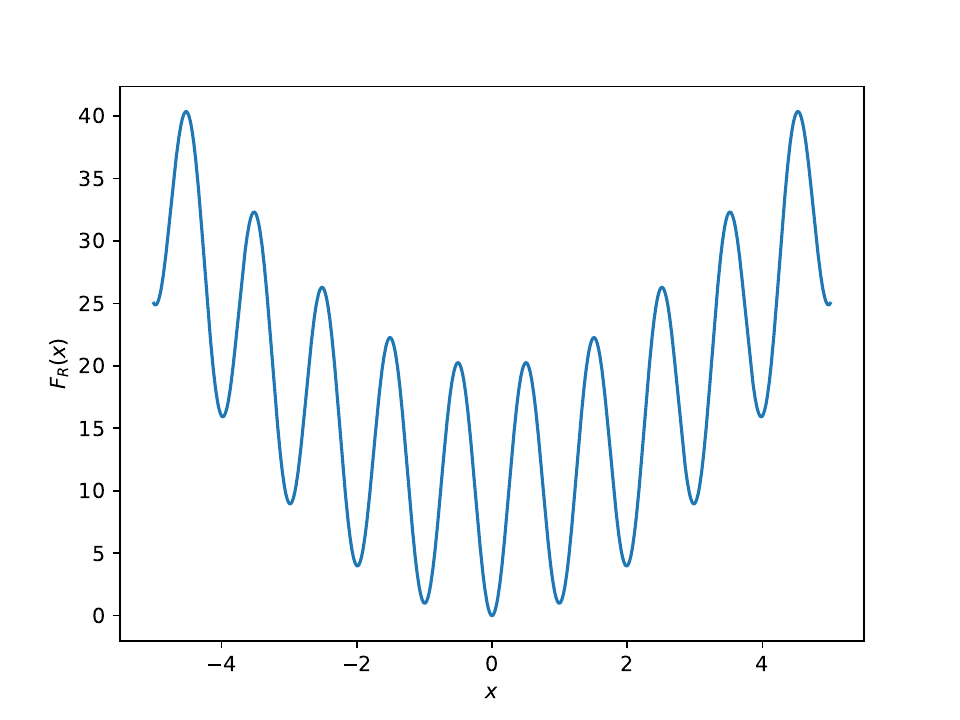}
  \caption{Rastrigin function.}
  \label{fig:Rastrigin_1D}
\end{subfigure}
\caption{Non-convex functions that will be used for the numerical examples when \(d=1\).}
\label{fig:1D_exp}
\end{figure}

For the numerical evidence in Section~\ref{sec:numerical_evidence} we will use the following $\gammab$ function in the application of Algorithm \ref{algo:SSA}:
\begin{equation}\label{eqn:gamma_smooth}
    \gammab_{\alphalam,\betamu}(m) = \begin{cases}
    \alphalam\exp\left(\frac{m}{m-\betamu}\right) & m < \betamu \\
    0 & m \geq \betamu
    \end{cases}\,.
\end{equation}
The smooth decay of the function (see Figure \ref{fig:gamma_smooth}) gives different magnitudes of noise to each particle depending on their mass, but eventually for sufficiently high mass the noise will disappear. We only present three examples in this section. Extensive numerical experiments are presented in Appendix~\ref{sec:more_num_exp}.

For the numerical study in Section \ref{sec:numerical_study} we will use the following $\gammab$ function so we have a more strict change in the application of the noise after certain amount of mass:
\begin{equation}\label{eqn:gamma_continuous}
    \gammab_{\alphalam,\betamu}(m) = \alphalam\left(-\frac{1}{2}\tanh(1000(m - \betamu)) + \frac{1}{2}\right)\,.
\end{equation}
The hyperbolic tangent form of the function is chosen to closely resemble a discontinuous cut-off function for samples whose weight is below $\betamu$ (see Figure \ref{fig:gamma_step}): this means particles with smaller weights are encouraged to explore the landscape of the objective with the same rate $\alphalam$ while all other heavier particles are given zero stochasticity. Experimentally we observe a very neat relation between $\betamu$ and $N$ for a consistent convergence rate, and we leave that to Section~\ref{sec:numerical_study}.

\begin{figure}[ht]
    \centering
    \begin{subfigure}{.49\textwidth}
    \includegraphics[width=1\linewidth]{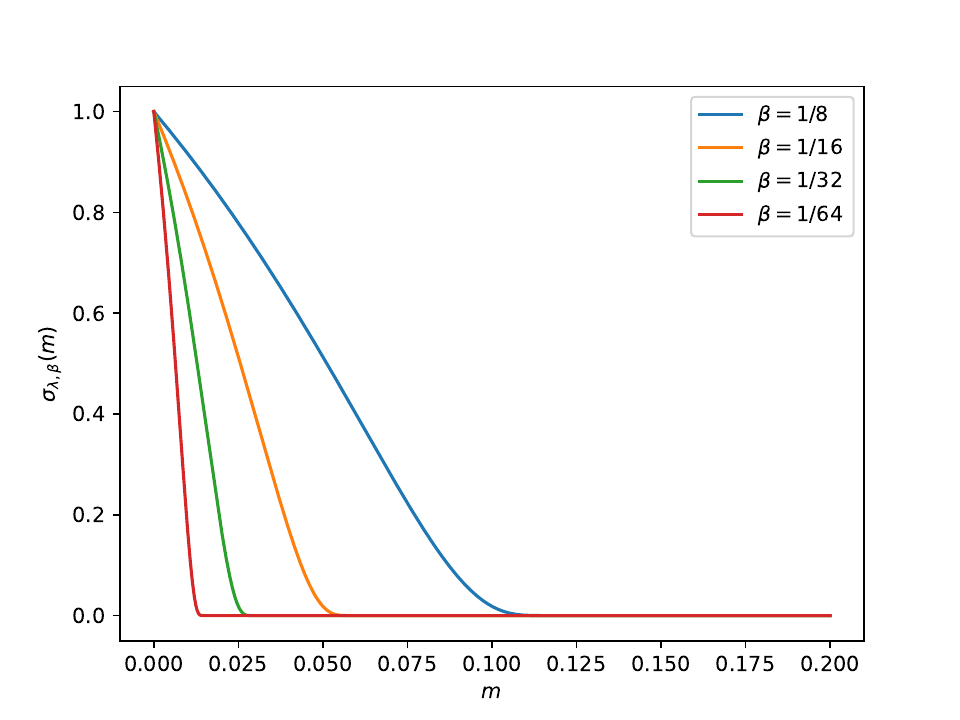}
    \caption{Smooth $\gammab$.}
    \label{fig:gamma_smooth}
    \end{subfigure}
    \begin{subfigure}{.49\textwidth}
    \includegraphics[width=1\linewidth]{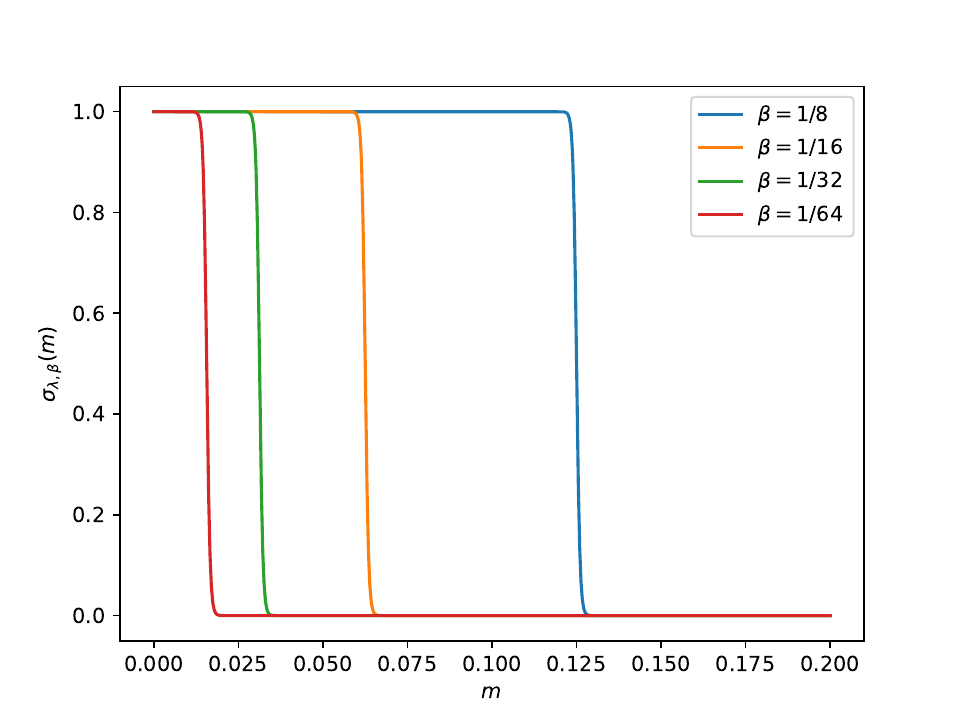}
    \caption{Step-like $\gammab$.}
    \label{fig:gamma_step}
    \end{subfigure}
    \caption{$\gammab$ function for different values of the parameter $\betamu$ and $\alphalam=1$.}
    \label{fig:gamma_func}
\end{figure}

To demonstrate the performance of the algorithm, we choose the experiments where the initial swarm does not cover the global minimum basin, to rule out the easy-to-converge situations. The convergence of the algorithm purely depends on the balancing between Brownian motion and local gradient descent through mass-transfer.

\subsection{Validation of theoretical results}\label{sec:numerical_evidence}

The first part of our simulation is to showcase the theoretical results we had for Algorithm~\ref{algo:SSA}. For this, we will apply the algorithm on the Ackley function for $d=1,\, 2$, then Rastrigin for $d=1$ and, finally to Ackley function with $d=10$. In all cases we will show the performance of the algorithm and check its dependence on $T$ and $N$.

\subsubsection{Ackley function in lower dimensions}
We experiment with the Ackley function for $d=1$ and $d=2$. First, for $d=1$, we pick $N=8$ particles, $\betamu=1/8$, $\alphalam=1$ and run the algorithm for step size $h=10^{-4}$ and $20000$ iterations for a total time of $T=2$. Figure~\ref{fig:Ackley_1D_results_1} demonstrates a few time instances of the particles: It is clear that they gradually move towards the global minimum. We then tested the convergence with $N=8,\, 16,\, 32,\, 64$. Each run of the algorithm provides one trajectory of $\fbarNt$, and in simulation we ran the algorithm $20$ times and compute its expectation, first and third quartiles. The solid lines in Figure~\ref{fig:Ackley_1D_results_2} demonstrate the convergence of $\mathbb{E}[\fbarNt]$ in time, and the shaded areas shows the difference between the first and third quartiles.

\begin{figure}[ht]
    \centering
    \includegraphics[scale = 0.33]{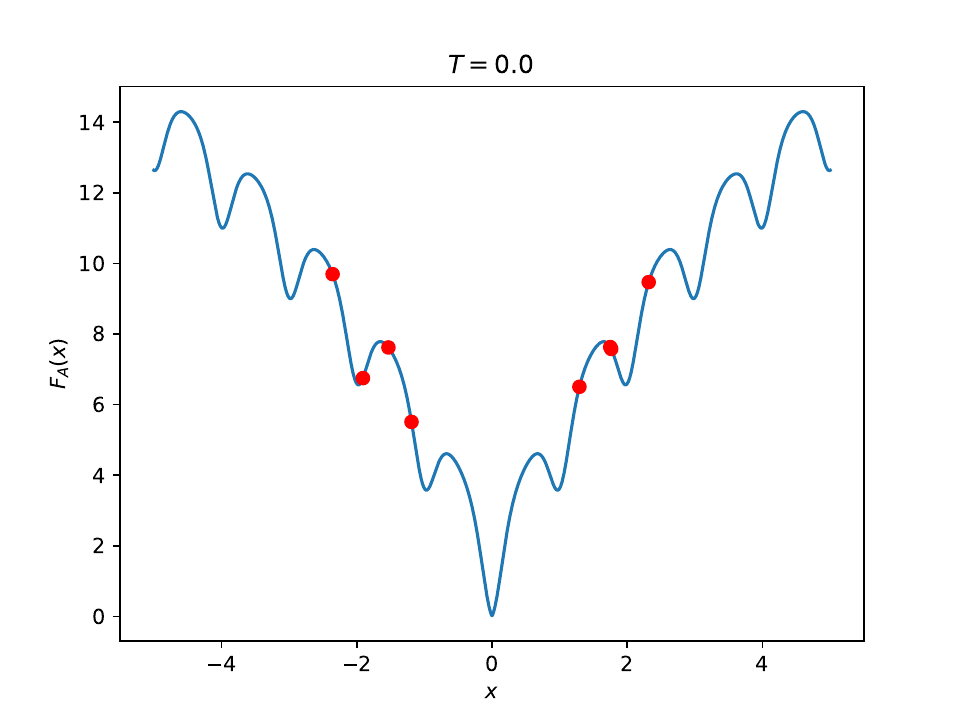}
    \includegraphics[scale = 0.33]{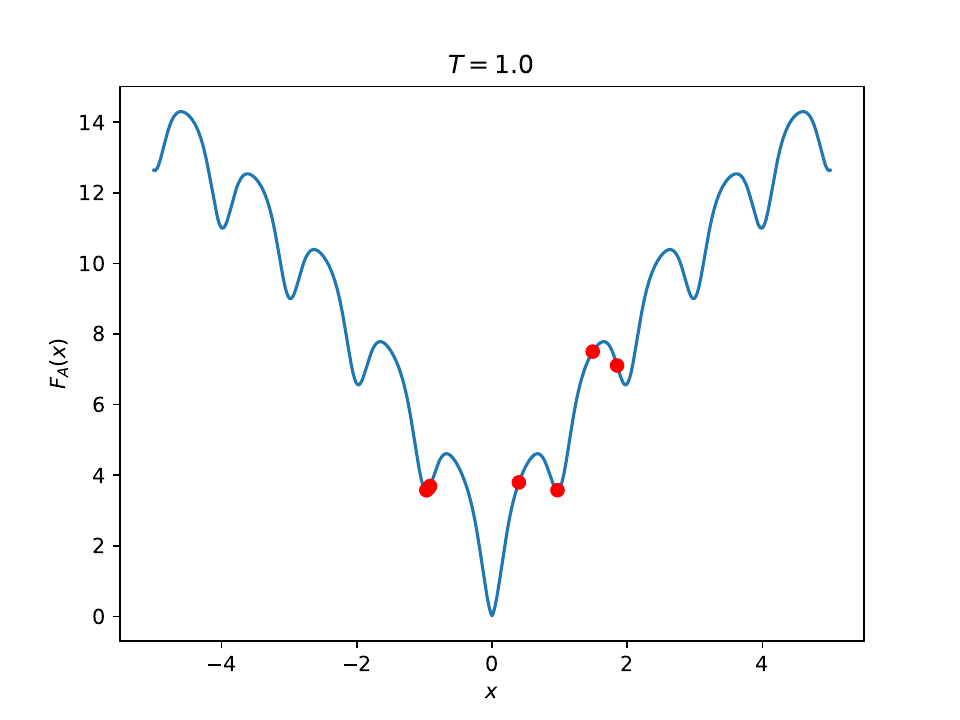}
    \includegraphics[scale = 0.33]{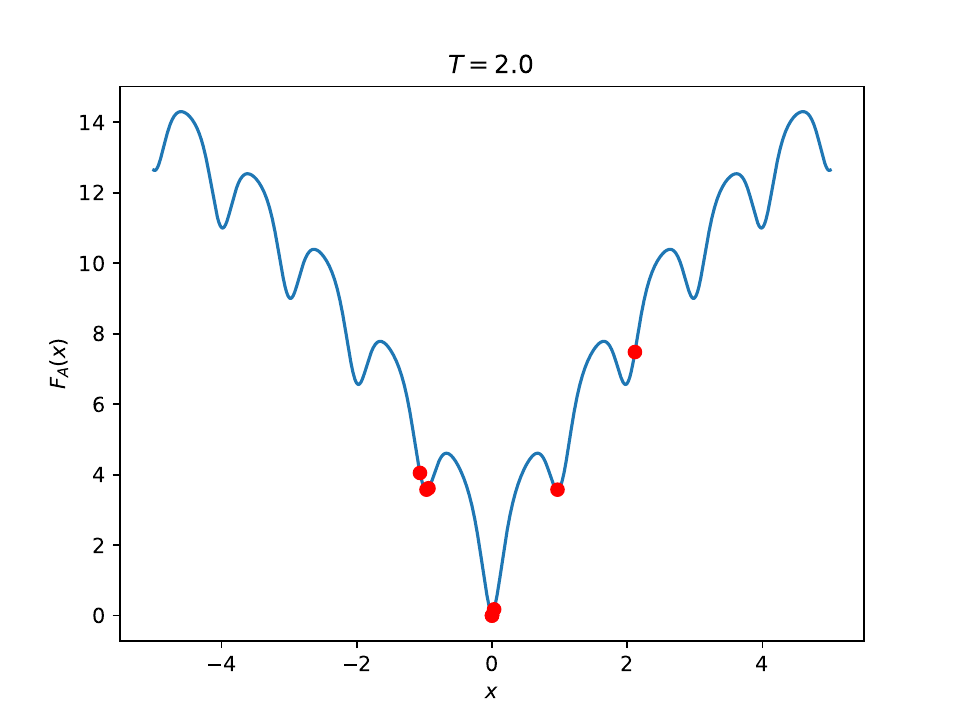}
    \caption{Swarm movement (in red) for Ackley function in $d=1$.}
    \label{fig:Ackley_1D_results_1}
\end{figure}

\begin{figure}[ht]
    \centering
    \includegraphics[scale = 0.75]{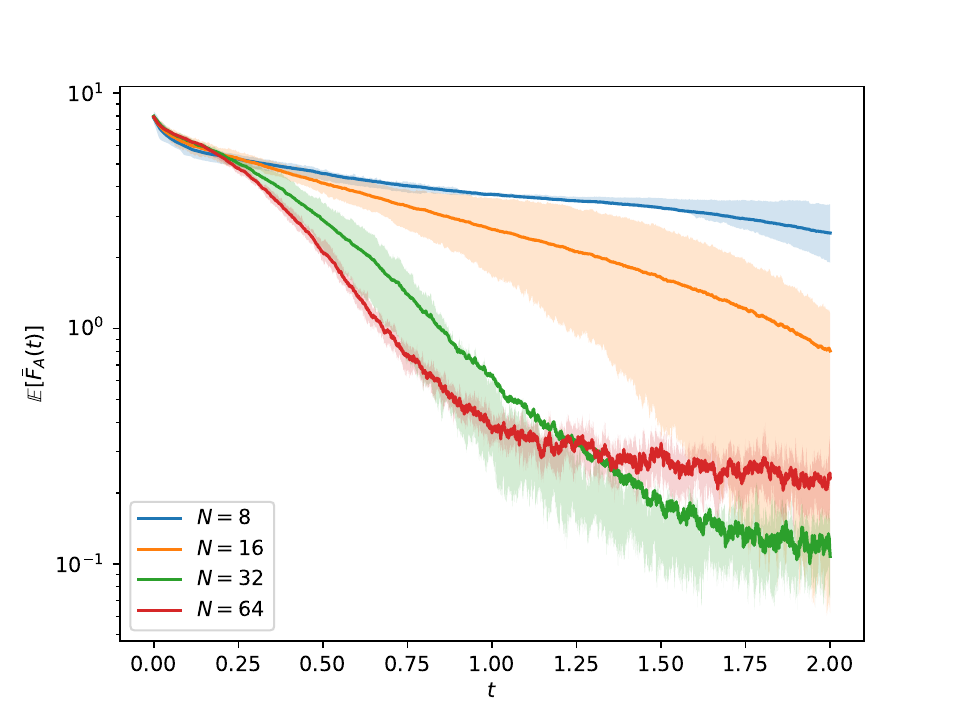}
    \caption{Expectation of $\fbarNt$ for different values of $N$ for Ackley function in $d=1$ in log-scale. The shaded area shows the difference between the lower quartile and the higher quartile for $\fbarNt$.}
    \label{fig:Ackley_1D_results_2}
\end{figure}

For $d=2$, we pick $N=16$ particles, $\betamu=1/16$, $\alphalam=1$ and we run the algorithm for step size $h=10^{-4}$ and $20000$ iterations for a total time of $T=2$. In Figure~\ref{fig:Ackley_2D_results_1} we visualize the convergence of particles to the global minimum in a few time instances. We repeat the experiment for $d=1$ case and compute the mean, first and third quartiles of $\fbar^N$ for different choices of $N$, and they are plotted in Figure~\ref{fig:Ackley_2D_results_2}.

\begin{figure}[ht]
        \centering
        \includegraphics[scale = 0.33]{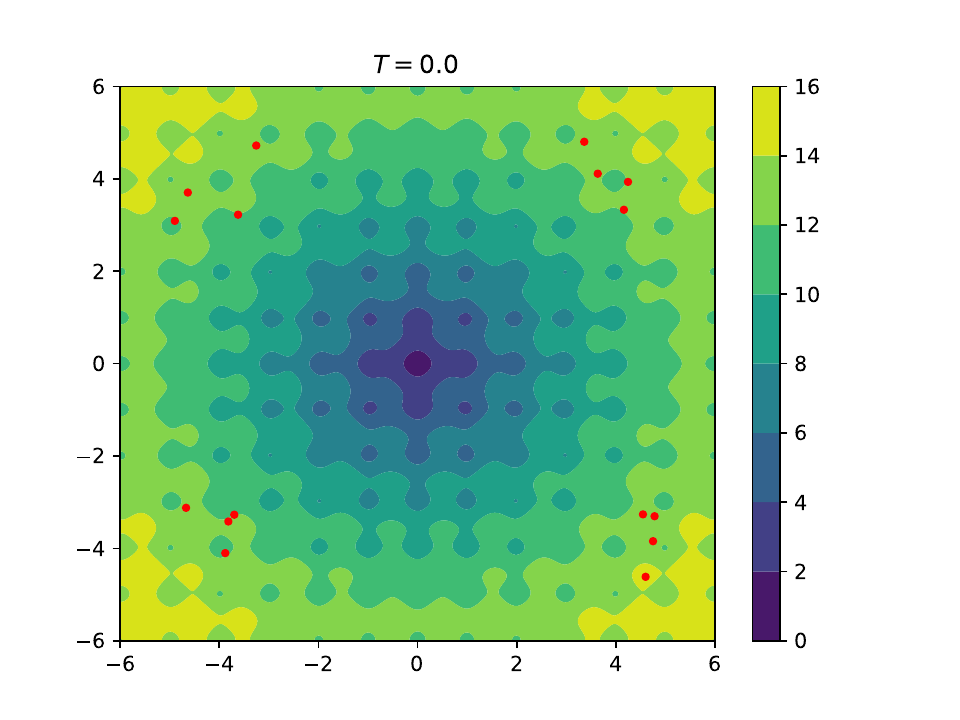}
        \includegraphics[scale = 0.33]{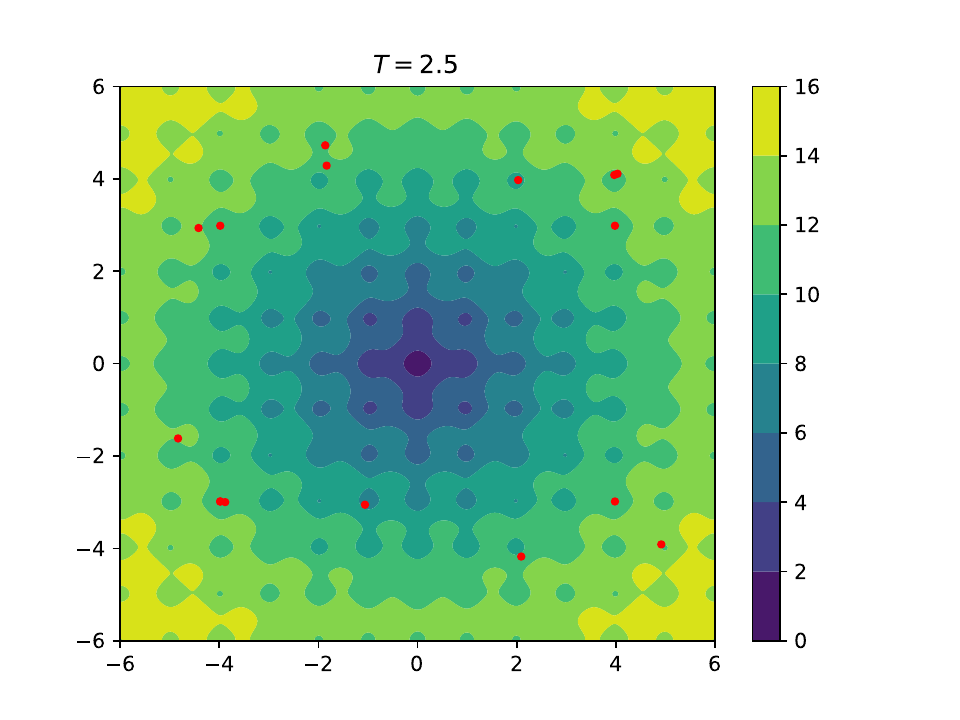}
        \includegraphics[scale = 0.33]{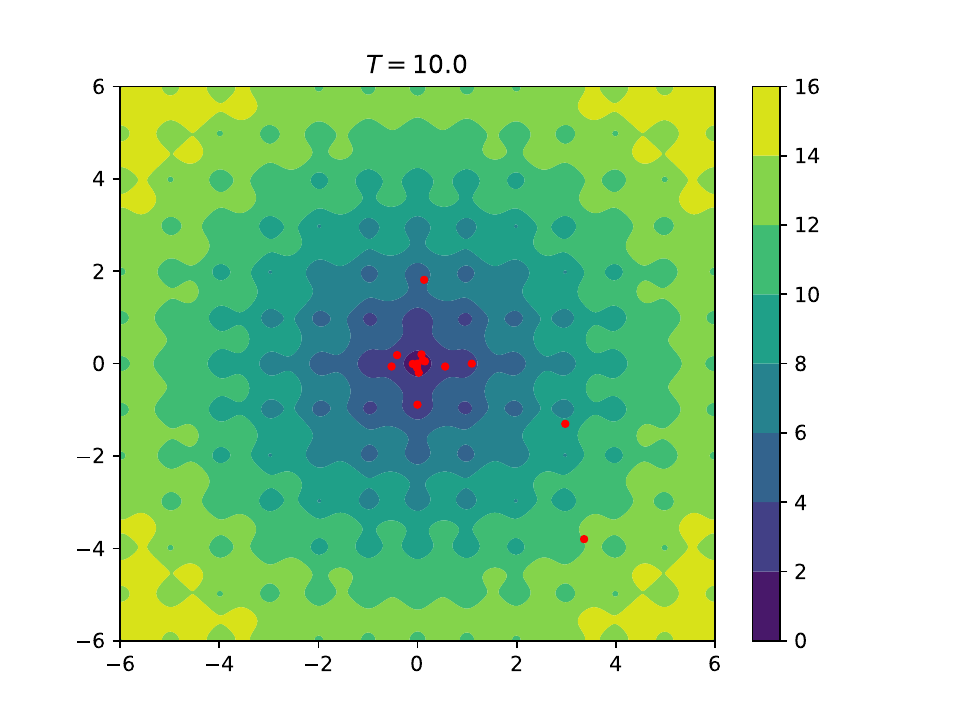}
    \caption{Swarm movement (in red) for Ackley function in $d=2$.}
    \label{fig:Ackley_2D_results_1}
\end{figure}

\begin{figure}[ht]
    \centering
    \includegraphics[scale = 0.75]{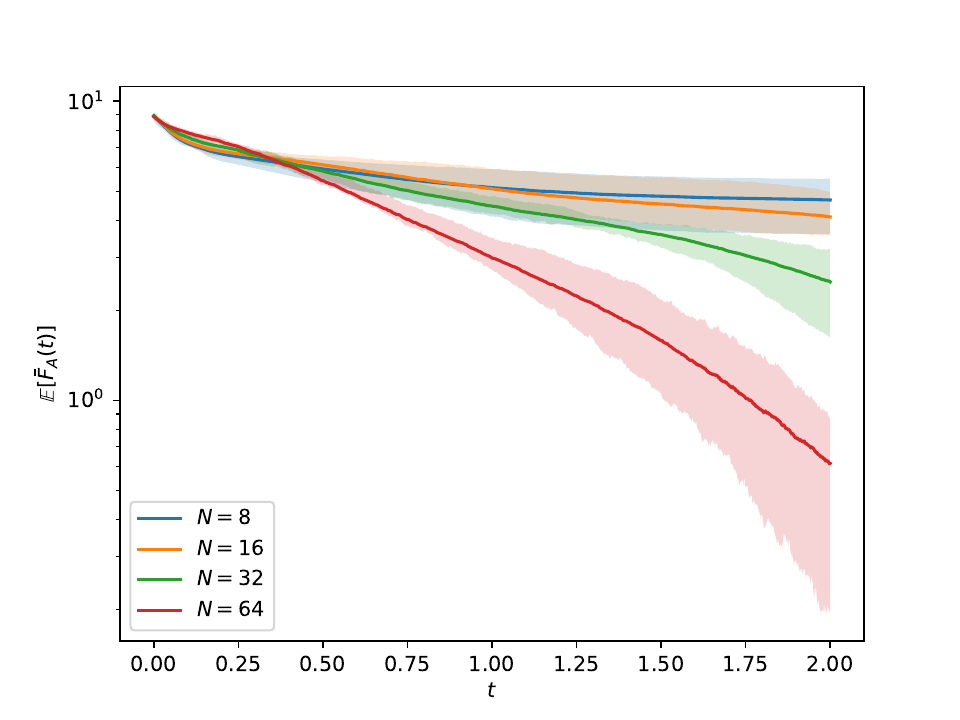}
    \caption{Expectation of $\fbarNt$ for different values of $N$ for Ackley function in $d=2$ in log-scale. The shaded area shows the difference between the lower quartile and the higher quartile for $\fbarNt$.}
    \label{fig:Ackley_2D_results_2}
\end{figure}

\subsubsection{Rastrigin function in low dimension}\label{sec:Rastrigin}
The third test is conducted for the Rastrigin function for $d=1$. The choice of parameters are $N=8$, $\betamu = 1/8$ and $\alphalam=2$. The step-size is $h=10^{-4}$ and we run it up to $T=500$.

In Figure~\ref{fig:Rastrigin_1D_results_1} we can see that the particles eventually find the global minimum after a long time. The convergence of $\fbarNt$, however, is significantly faster than finding the global minimum. We also evaluate the mean-field convergence. Running the experiment for $20$ runs, we obtain 20 trajectories of $\fbarNt$. We compute the mean, the first and third quartiles, and plot them in Figure~\ref{fig:Rastrigin_1D_results_2}. It is clear that bigger $N$ gives a higher chance of achieving a lower value for $\f$ in time.

\begin{figure}[ht]
    \centering
    \includegraphics[scale = 0.33]{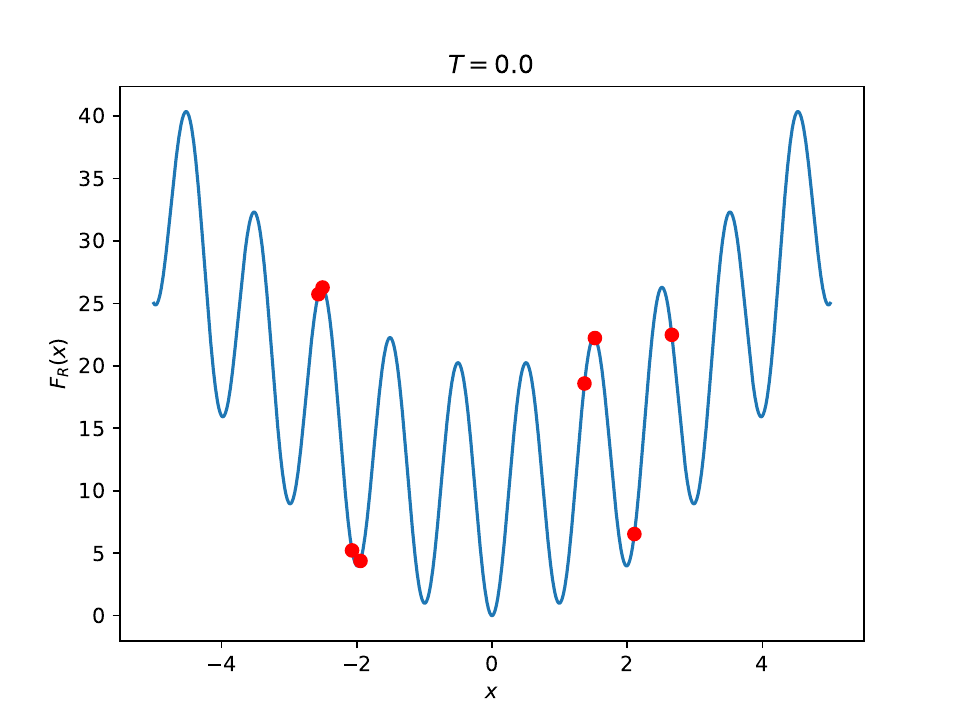}
    \includegraphics[scale = 0.33]{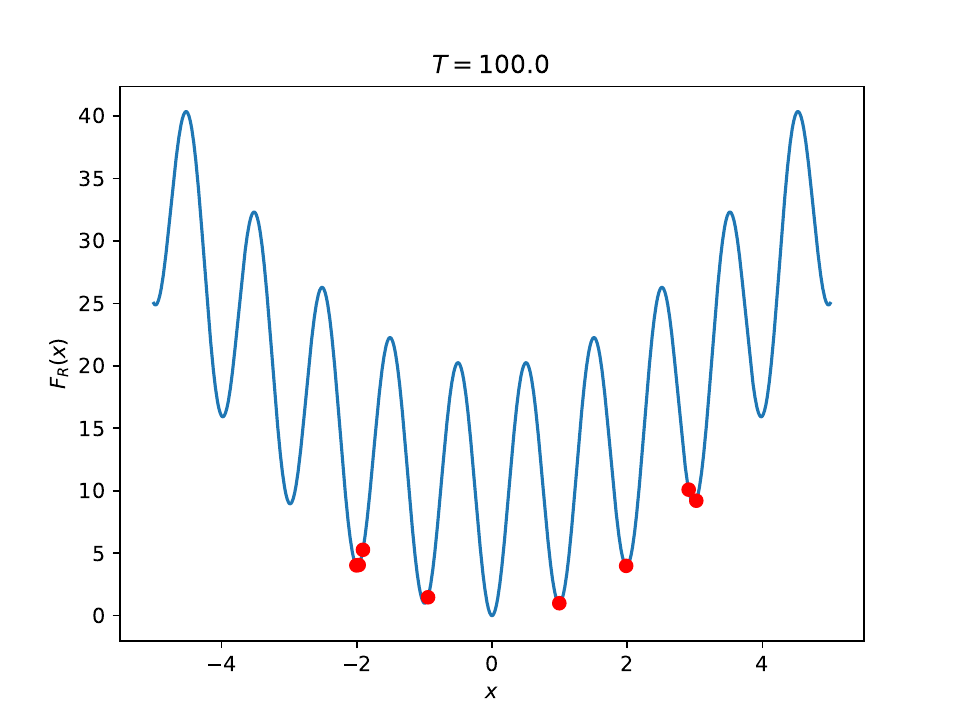}
    \includegraphics[scale = 0.33]{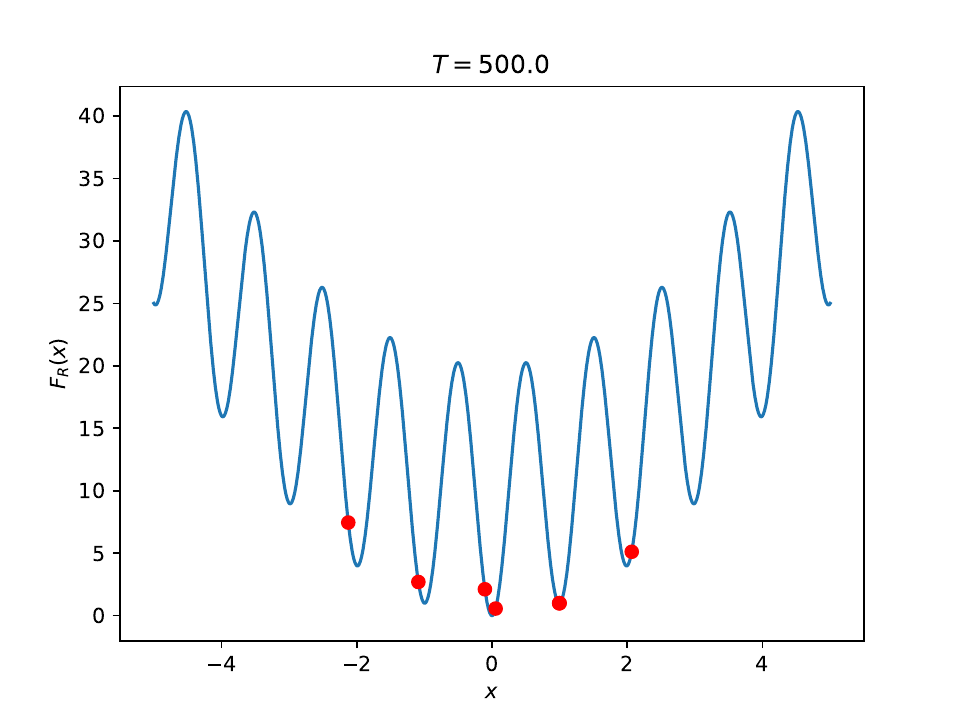}
    \caption{Swarm movement (in red) for Rastrigin function in $d=1$.}
    \label{fig:Rastrigin_1D_results_1}
\end{figure}

\begin{figure}[ht]
    \centering
    \includegraphics[scale = 0.75]{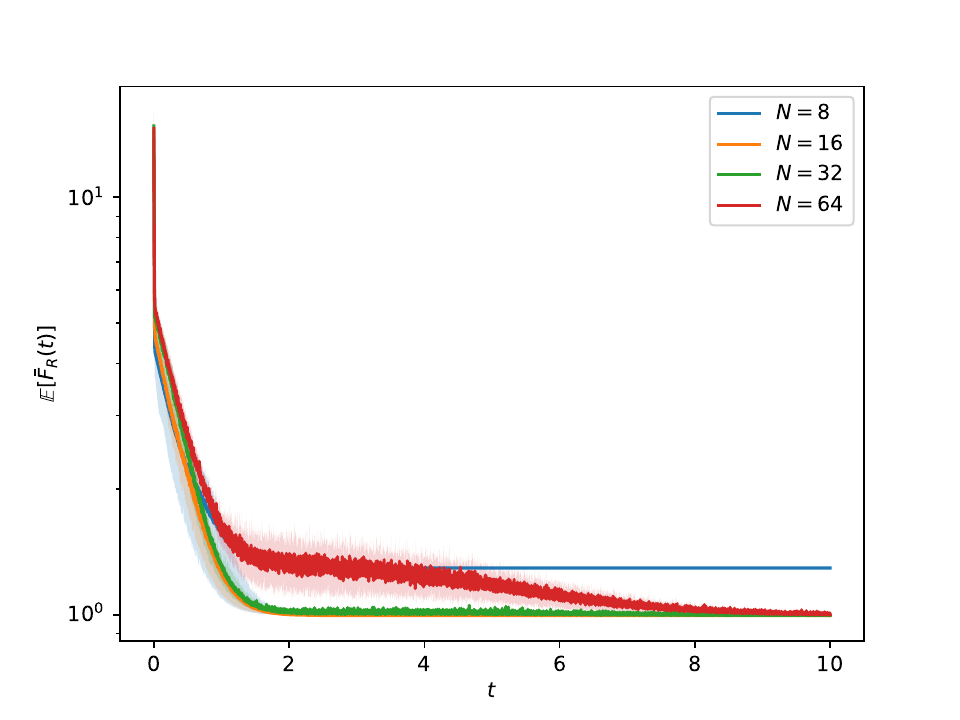}
    \caption{Expectation of $\fbarNt$ for different values of $N$ for Rastrigin function in $d=1$ in log-scale. The shaded area shows the difference between the lower quartile and the higher quartile for $\fbarNt$.}
    \label{fig:Rastrigin_1D_results_2}
\end{figure}

\subsubsection{Ackley function in higher dimensions}
The fourth experiment is to evaluate the performance of the algorithm on a high dimensional function (Ackley function for $d=10$). The hope is the Brownian motion in the dynamics can help overcoming the curse of dimensionality. We set the parameters to be $\alphalam = 1$, $\betamu = 1/N$. In Figure~\ref{fig:Ackley_10D_results_1}, we show the convergence in time of $20$ runs of experiments using Algorithm~\ref{algo:SSA}, and compare it to the solution to the deterministic system shown in~\eqref{eqn:deterministic_system}. The observation is that the deterministic system converges very quickly at the initial steps, but it saturates at local minima, and the global optimization is never achieved. On the contrary, Algorithm~\ref{algo:SSA}, using either $2000$ or $200$ samples both achieve the global minimum, albeit with slower convergence rate. We attribute this to the randomness in the method and this observation confirms Theorem~\ref{thm:small_probability}. We also emphasize that the performance of the algorithm using smaller number of samples ($200$) is compatible to that using $2000$, so a small number of samples already have demonstrated impressive numerical performance in this example.

\begin{figure}[ht]
    \centering
    \includegraphics[scale = 0.75]{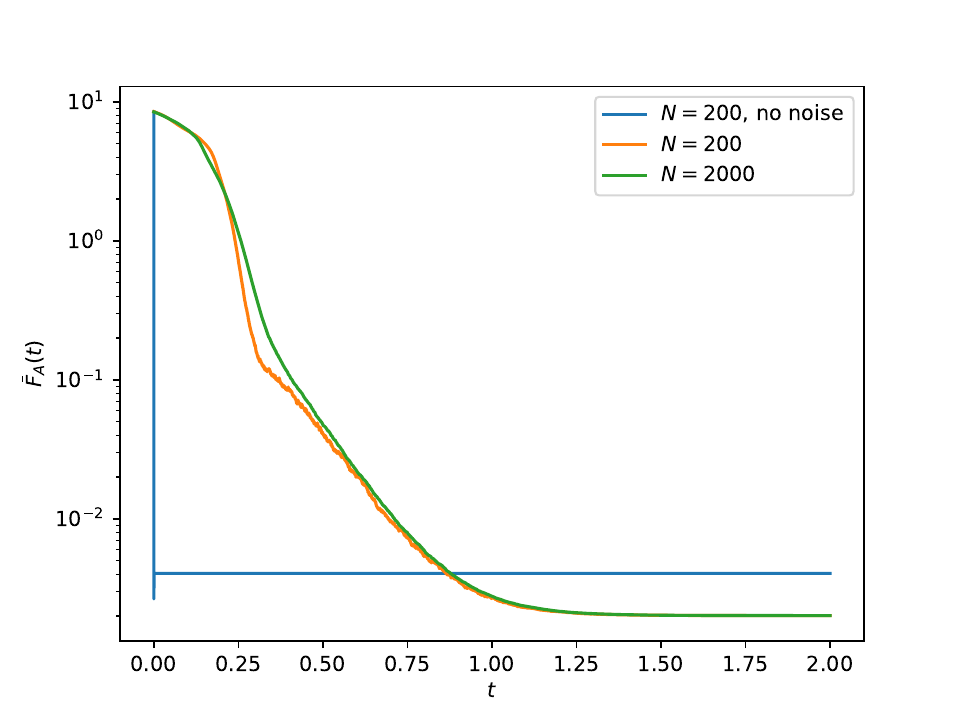}
    \caption{Long time behavior of $\fbarNt$ for different values of $N$ for Ackley function in $d=10$, in log-scale, in the absence and presence of noise.}
    \label{fig:Ackley_10D_results_1}
\end{figure}

\subsection{Numerical study}\label{sec:numerical_study}
Though not the focus of the paper, we observe, through running numerical simulations, some interesting algorithmic performance. We collect these findings in this subsection. We first present studies on parameter-tuning. As shown earlier, the performance of the algorithm depends on the choice of the $\gammab$ function that relies on the configuration of the following three parameters: $\alphalam$, $\betamu$ and $N$. While $\alphalam$ determines the strength of the Brownian motion, the cut-off point $\betamu$ and its relation to $N$ fully characterize the percentage of particles that are allowed to ``roam.'' To fully understand the relation, we run Algorithm~\ref{algo:SSA} on the Ackley functions for $d=1$ up to $T=1$ with $\gammab$ defined as in~\eqref{eqn:gamma_continuous} for $\alphalam=1$ using many choices of $(N,\betamu)$. To be more specific, we choose $N=8,\, 16,\, 32,\, 64$ and $\betamu=1/8,\, 1/16,\, 1/32,\, 1/64$ and for each $(N,\betamu)$ configuration, we run the experiment $40$ times for a collection of $40$ trajectories of $\fbarNt$. The mean and variance of these $\fbarNt$ for different $(N,\betamu)$ choices are presented, on log-scale, in Figure~\ref{fig:numerical_study}. We note that experimental-wise, it seems that $N=2/\betamu$ seems to give the most ideal convergence. Another observation is that the performance of the algorithm is consistent across the cases when $\betamu N$ are set to be constants. Namely, the convergence rate seem to agree along diagonals in Figure~\ref{fig:numerical_study}.

\begin{figure}[ht]
    \centering
    \includegraphics[scale=0.32]{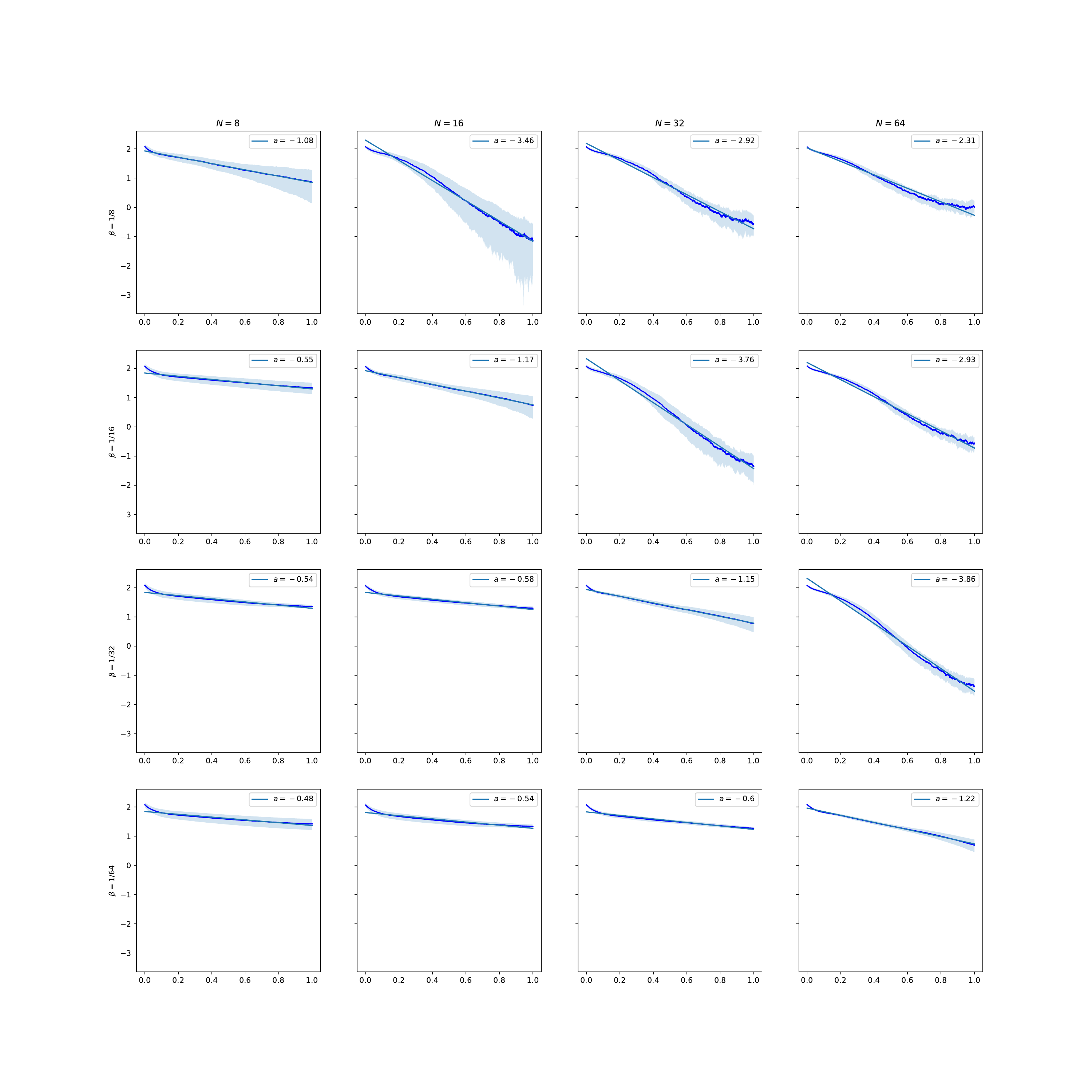}
    \caption{Convergence rates for the Ackley functions for different values of $N$ and $\beta$.}
    \label{fig:numerical_study}
\end{figure}

We then present the long time behavior of the algorithm performed on a variation of the Rastrigin function from Section~\ref{sec:Rastrigin}:
\begin{equation}\label{eqn:modified_Rastrigin}
    \f_{R}(\bx) = 10d + \sum_{i=1}^{d}[2 x_{i}^{2} - 10\cos(2\pi x_{i})] + D\,,
\end{equation}
with parameters chosen to be $d=1$ and $D=0$, and the same choices of $\{x_i\}$. In this case, we run the simulation for a very long time till $t\sim 700$. As seen in Figure \ref{fig:long-time_rastrigin}, the error seems to saturate at around $O(1)$ after $t=10$, and the second dip occurs at $t=690$, dragging the error down to $0$. The convergence takes place in a staggered pattern. The conjecture is that the value of the local minimum of the Rastrigin function is very close to that of the global one, and the samples stuck at the local minimum for a very long stretch of time, accumulating all the mass. The lighter ``explorer'' uses this time to explore and find the global minimum. Before the very small basin gets found, local minimum takes the lead and the error stays at $O(1)$, and the second dip occurs when the global basin finally was found by one explorer, and the mass gets transferred to it. We find this staggered pattern enlightening, and future research is needed for spelling out the explicit dependence of this behavior on different parameters.

\begin{figure}[ht]
    \centering
    \begin{subfigure}{.3\textwidth}  \includegraphics[width=1\linewidth]{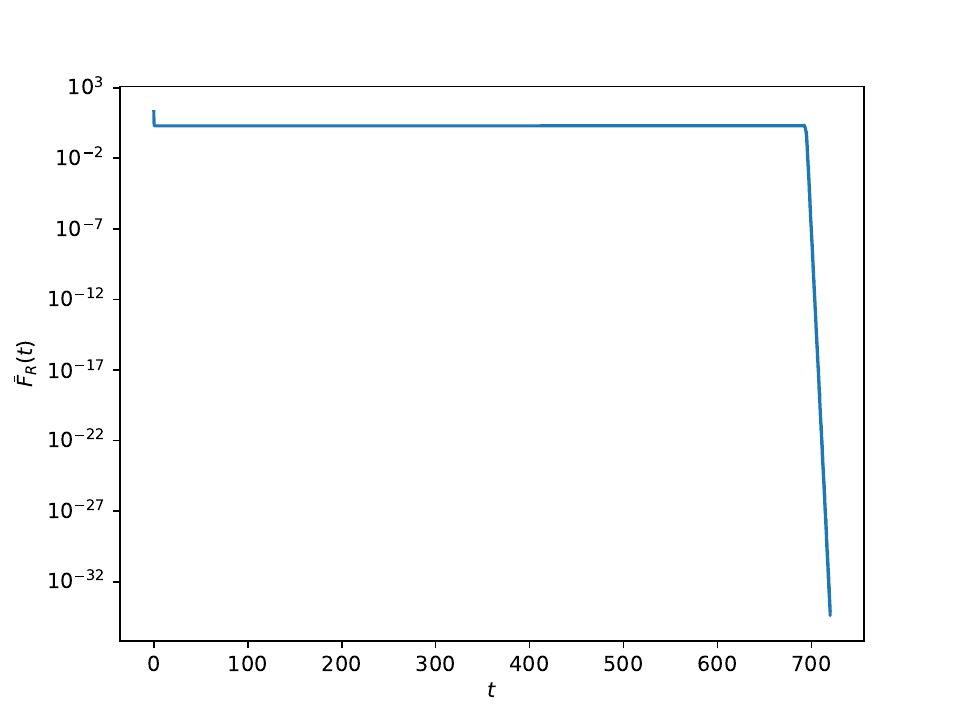}
    \caption{$t\in[0,700]$.}
    \label{fig:long-time_rastrigin700}
    \end{subfigure} 
    \begin{subfigure}{.3\textwidth}\includegraphics[width=1\linewidth]{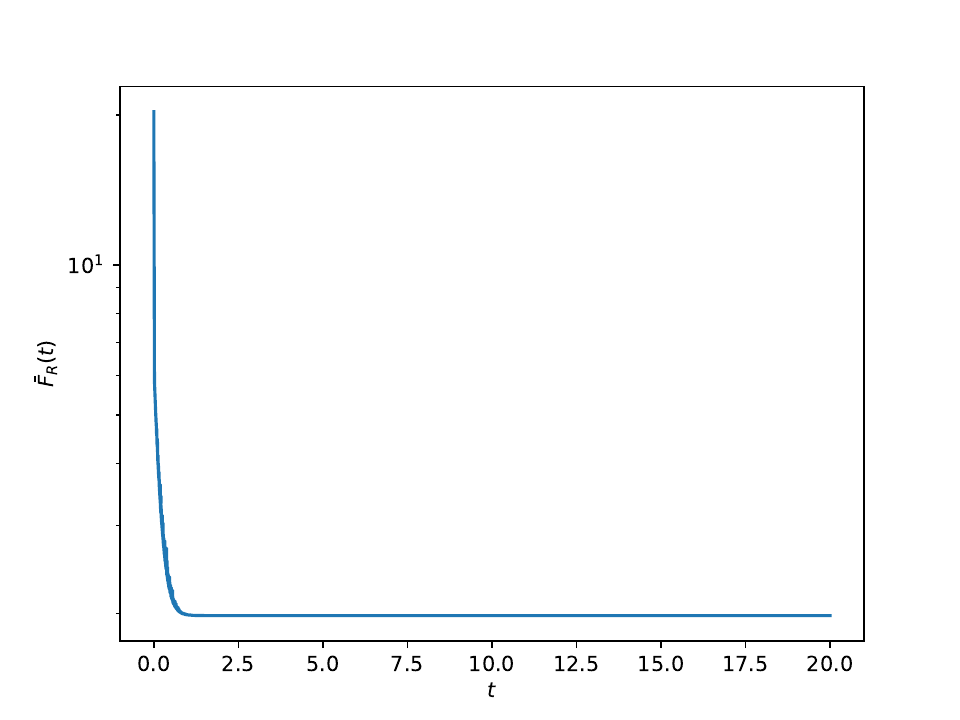}
    \caption{Zoom-in at $t\in[0,20]$.}
    \label{fig:long-time_rastrigin20}
    \end{subfigure}
    \begin{subfigure}{.3\textwidth}  \includegraphics[width=1\linewidth]{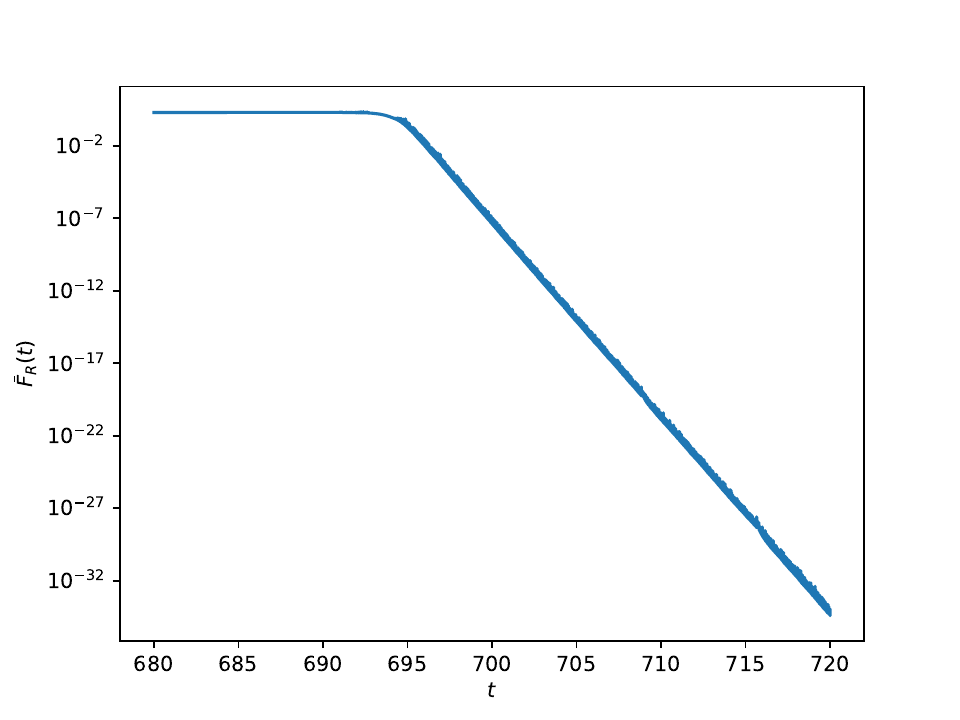}
    \caption{Zoom-in at $t\in[680,720]$.}
    \label{fig:long-time_rastrigin720}
    \end{subfigure}
      \caption{Long time behavior of $\fbarNt$ for~\eqref{eqn:modified_Rastrigin}.}\label{fig:long-time_rastrigin}
\end{figure}

\noindent
\section*{Acknowledgments} ET  thanks the Fondations Sciences Math\'{e}matiques des Paris  (FSMP) and LJLL at the Sorbonne University for the support and hospitality. All four authors thank the two anonymous reviewers for constructive and insightful suggestions.

\bibliographystyle{plain}
\bibliography{references-sbgd-sa-v2}

\newpage
\appendix
\numberwithin{equation}{section}
\section{Interacting vs. non-interacting agents}

In this appendix, we investigate the effect of communication, and study the behavior of two systems that produce pitfalls. In subsection~\ref{sec:pf_of_thhm_1} we study basic properties of the deterministic system~\eqref{eqn:deterministic_system} which governs non-interacting  agents. In subsection~\ref{sec:langevin}, we switch gears to interacting agents, governed by the stochastic system~\eqref{eqn:langevin_system} with mass communication passively adjusted according to $\bx$-values, and prove Theorem~\ref{thm:no_gamma}.

\subsection{Properties of the deterministic system (non interacting agents). }\label{sec:pf_of_thhm_1}
\begin{proposition}\label{thm:deterministic} Given the \(N-\)particle system defined in \eqref{eqn:deterministic_system}, we have the provisional minimum $\fbarN(t)$ always decay in time\footnote{Note that the time dependent quantities in the deterministic case are denotes $\square(t,\cdots)$, e.g., $\bxi(t), \fbarN(t), \nuNtx$ etc.}:
    \begin{equation}\label{eqn:bar_f_dec_determin}
    \ddt\fbarN(t)\leq 0\,,\quad\forall t\geq 0\, \qquad \fbarN(t)= \frac{1}{M(0)}\sum_{j=1}^N m^j(t) \f(\bxi(t)).
    \end{equation}
\end{proposition}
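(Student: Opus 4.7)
The plan is to differentiate $\fbarN(t)$ directly and observe that the resulting expression decomposes into two manifestly non-positive pieces: one from the gradient flow of the positions, and one from the mass dynamics. I would proceed in three steps.

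First, I would record two structural facts about \eqref{eqn:deterministic_system}. The mass ODE $\ddt \mi = -\mi(\f(\bxi) - \fbarN)$ is linear in $\mi$, so its solution $\mi(t) = \mi(0)\exp\bigl(-\int_0^t (\f(\bxi(s)) - \fbarN(s))\,ds\bigr)$ stays strictly positive for all $t$ provided $\mi(0) > 0$. Summing this ODE over $j$ and using the definition of $\fbarN$ yields $\ddt M(t) = 0$, so $M(t) \equiv M(0)$ is conserved; this legitimizes interpreting $p_j(t) := \mi(t)/M(0)$ as a probability vector.

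Second, I would expand $\ddt \fbarN$ by the product rule inside the sum,
\[
\ddt \fbarN(t) = \frac{1}{M(0)}\sum_j \ddt\mi\cdot \f(\bxi) + \frac{1}{M(0)}\sum_j \mi \nabla\f(\bxi)\cdot \ddt\bxi,
\]
and substitute both ODEs. The position contribution becomes $-\frac{1}{M(0)}\sum_j \mi |\nabla\f(\bxi)|^2 \leq 0$ by the positivity of the masses established above. For the mass contribution, substituting $\ddt \mi = -\mi(\f(\bxi)-\fbarN)$ and using the identity $\frac{1}{M(0)}\sum_j \mi\f(\bxi) = \fbarN$ reduces it to
\[
-\frac{1}{M(0)}\sum_j \mi\,\f(\bxi)^2 + (\fbarN)^2 = -\mathbb{E}_p[\f^2] + (\mathbb{E}_p[\f])^2 = -\mathrm{Var}_p\bigl(\f(\bxi)\bigr).
\]

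Third, the non-negativity of variance (equivalently, Jensen's inequality applied to $y\mapsto y^2$ under the probability measure $p$) makes this quantity $\leq 0$, so combining both pieces gives $\ddt \fbarN(t) \leq 0$. There is essentially no obstacle in this argument; the derivation is a one-line algebraic identity, and the only point that truly needs justification is the sign-preservation of the masses that legitimizes both the probabilistic interpretation and the non-positivity of the gradient piece.
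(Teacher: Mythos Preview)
Your proposal is correct and follows essentially the same approach as the paper: differentiate $\fbarN$ by the product rule, substitute the two ODEs, and recognize the result as a sum of two non-positive terms. The paper writes the mass contribution as $-\frac{1}{M(0)}\sum_j \mi(\f(\bxi)-\fbarN)^2$ rather than as a variance, but this is the same quantity; your additional remarks on mass positivity and conservation of $M(t)$ are sound and slightly more explicit than what the paper states.
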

\begin{proof}
This comes from direct computation:
\[
\begin{aligned}
\ddt\fbarN(t)&=\frac{1}{M(0)}\left(\sum^N_{j=1}\left(\ddt \mi(t)\right)\f(\bxi(t))+\mi(t)\ddt \f(\bxi(t))\right)\\
&=\frac{1}{M(0)}\left(-\sum^N_{j=1}\mitt\left(\f(\bxi(t))-\fbarN(t)\right)\f(\bxi(t))-\mi(t)\left|\nabla \f(\bx^j(t))\right|^2\right)\\
&=-\frac{1}{M(0)}\sum^N_{j=1}\mitt\left(\f(\bxi(t))-\fbarN(t)\right)^2-\frac{1}{M(0)}\sum^N_{j=1}\mitt\left|\nabla \f(\bxi(t))\right|^2\leq 0\,.
\end{aligned}
\]
This concludes the proof.
\end{proof}

Similar to its stochastic counterpart, we can also derive the mean-field limiting equation in the deterministic case. This is expressed in terms of the limiting distribution $\nutx$. To be specific, we have the following proposition:
\begin{proposition}
    Let $\{\bxi(t),\mi(t)\}_{j=1}^N$ denote the crowd of agents satisfying the deterministic system \eqref{eqn:deterministic_system} subject to initial data $\{\bxi(0), \mi(0)=\frac{1}{N}\}^N_{j=1}$, independently drawn from random distribution $\rho_0$, and let $\nuNtx$ denote its empirical distribution 
\begin{equation}\label{eqn:dist_deterministic}
  \nuNtx = \sum_{j=1}^{N}\mi(t)\delta_{\bxi(t)}(\bx).
\end{equation}
Then its mean-field limit of the ensemble distribution, $\nuNtx \rightarrow \nutx$,  satisfies
    \begin{equation}\label{eqn:Fokker-Planck_det}
    \partial_{t}\nutx = \nabla_{\bx}\cdot(\nutx \nabla \f(\bx)) - (\f(\bx) - \fbarnut)\nutx,
\end{equation}
subject to $\nu(0,\cdot)=\rho_0(\cdot)$. Here $\fbarnut$ is the weighted average is:
\begin{equation}\label{eqn:weighted_ave_cont}
    \fbarnut = \frac{\ds \int \f(\bx) \nd\nutx}{\ds \int \nd\nutx}\,.
\end{equation}
\end{proposition}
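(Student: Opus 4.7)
The plan is to derive the PDE \eqref{eqn:Fokker-Planck_det} formally via a test-function computation, then rigorously justify the convergence $\nu^N \to \nu$ via a coupling argument analogous to Lemmas~\ref{lem:aux_mean}--\ref{lem:aux_sde}, but considerably simplified by a crucial feature of the deterministic setting: in \eqref{eqn:deterministic_system} the position ODE $\ddt\bxi(t) = -\nabla \f(\bxi(t))$ is independent of the masses. Consequently the $\{\bxi(t)\}$ are simply $N$ independent copies of the gradient flow applied to initial samples $\{\bxi(0)\}$ drawn i.i.d. from $\rho_0$.

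For the formal derivation, I take a smooth compactly supported test function $\phi(\bx)$ and compute, using the weighted empirical measure \eqref{eqn:dist_deterministic},
\[
\ddt\langle \phi,\nuNtx\rangle = \sum_{j=1}^N \ddt \mi(t)\,\phi(\bxi(t)) + \sum_{j=1}^N \mi(t)\,\nabla\phi(\bxi(t))\cdot\ddt\bxi(t).
\]
Substituting \eqref{eqn:deterministic_system} and recognizing integrals against $\nu^N$ gives $-\int (\f-\fbarN)\phi\,\nd\nu^N - \int \nabla\phi\cdot\nabla \f\,\nd\nu^N$; an integration by parts on the second integral and passing $N\to\infty$ (once convergence is justified) yields the weak form of \eqref{eqn:Fokker-Planck_det}, together with the identification of $\fbarnut$ via \eqref{eqn:weighted_ave_cont}.

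For the rigorous limit, I introduce an auxiliary mass $\widetilde m^j$ driven by the limiting provisional minimum along the same trajectory $\bxi(t)$,
\[
\ddt \widetilde m^j(t) = -\widetilde m^j(t)\bigl(\f(\bxi(t)) - \fbarnut\bigr), \qquad \widetilde m^j(0)=\tfrac{1}{N},
\]
and its associated empirical measure $\widetilde\nu^N(t,\bx) = \sum_j \widetilde m^j(t)\delta_{\bxi(t)}(\bx)$. Since the $\{\bxi(t)\}$ are i.i.d. pushforwards of $\rho_0$ and $N\widetilde m^j(t) = \exp\bigl(-\int_0^t (\f(\bxi(s))-\fbarnu(s))\,\nd s\bigr)$ is a bounded deterministic function of $\bxi(0)$, the law of large numbers (quantitative form from \cite{Fournier2015}) delivers $\widetilde\nu^N \to \nu$ in Wasserstein at rate $\mathcal{O}(N^{-\theta})$, with $\nu$ reconstructed from the characteristics of \eqref{eqn:Fokker-Planck_det}. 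For the residual $\nu^N - \widetilde\nu^N$, subtracting the two mass ODEs and using $\bxi = \widetilde\bx^j$ yields
\[
\ddt(\mi - \widetilde m^j) = -(\mi - \widetilde m^j)\f(\bxi) + (\mi - \widetilde m^j)\fbarnu + \mi(\fbarN - \fbarnu),
\]
and I split $\fbarN - \fbarnu = \sum_k (m^k - \widetilde m^k)\f(\bx^k) + \bigl[\sum_k \widetilde m^k \f(\bx^k) - \fbarnu\bigr]$, where the bracketed term is an $\mathcal{O}(1/\sqrt N)$ LLN error. A Gronwall estimate on $\frac{1}{N}\sum_j(\mi - \widetilde m^j)^2$, using the boundedness of $\f$ and of the masses (the latter by the same exponential bound as in footnote~\ref{foot:bound_mass_foot}), yields the desired $\mathcal{O}(1/N)$ rate on any compact time interval. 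The triangle inequality then closes $\nu^N \to \nu$.

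The main obstacle is precisely the self-consistency of $\fbarN$, which couples all $N$ mass ODEs. The saving grace relative to Lemma~\ref{lem:aux_sde} is twofold: there is no It\^o correction term, and since positions are mass-independent one has $\bxi(t) = \widetilde\bx^j(t)$ exactly, which removes the entire class of error terms $|\bxi - \widetilde\bx^j|$ from the Gronwall estimate. Moreover, the initial-mass fluctuation that required conditioning on a high-probability event $\Omega$ in Lemma~\ref{lem:aux_sde} vanishes here because $\mi(0) = \widetilde m^j(0) = 1/N$ exactly, so $\frac{1}{N}\sum_j \mi(0) = 1$ deterministically.
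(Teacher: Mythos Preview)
Your formal test-function derivation is essentially the paper's entire proof: the paper tests $\nu^N$ against a smooth $\phi$, differentiates $\sum_j m^j(t)\phi(\bx^j(t))$ in time, substitutes the ODEs \eqref{eqn:deterministic_system}, and reads off the weak form of \eqref{eqn:Fokker-Planck_det}. It stops there---the passage $\nu^N\to\nu$ is treated as a formal mean-field limit, with no coupling argument or convergence rate.

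Your proposal goes further. The rigorous part you add---introducing the auxiliary mass $\widetilde m^j$ driven by the limiting $\fbarnut$ along the \emph{same} trajectory $\bxi(t)$, then closing a Gronwall on $\frac{1}{N}\sum_j(m^j-\widetilde m^j)^2$---is correct and is a genuine simplification of Lemma~\ref{lem:aux_sde} for this setting: since the position ODE in \eqref{eqn:deterministic_system} is decoupled from the masses, you have $\bxi(t)=\widetilde\bx^j(t)$ exactly, which kills the $|\bxi-\widetilde\bx^j|$ error terms and the It\^o correction, and the deterministic initialization $m^j(0)=1/N$ removes the need to condition on the event $\Omega$. The identification $N\widetilde m^j(t)=\exp\bigl(-\int_0^t(\f(\bxi(s))-\fbarnu(s))\,\nd s\bigr)$ as a bounded function of the i.i.d.\ initial datum $\bxi(0)$ is the right way to invoke the LLN for $\widetilde\nu^N\to\nu$. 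What you gain over the paper is an actual convergence statement with rate $\mathcal{O}(N^{-1/2})$ on $|\fbarN(t)-\fbarnut|$ and Wasserstein closeness, rather than a formal derivation; the price is the extra bookkeeping, which in this deterministic case is modest.
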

\begin{proof}
To derive the mean-field limit, we test the system with a test smooth function \(\phi\):
\[\mathbb{E}_{\nut}[\phi] = \int \phi(\bx)\nd\nutx = \sum_{j=1}^{N}\mi(t)\phi(\bxi(t))\,.\]
We take the time derivative on the two sides of the equation. The left hand side provides:
\begin{equation}\label{eqn:ensemble_det_1}
    \ddt\mathbb{E}_{\nut}[\phi] = \int \partial_{t}\nutx\phi(\bx)\mathrm{d}\bx.
\end{equation}
Here  expectation is with respect to the randomly drawn initial data.  
The time derivative of the right hand side yields
\begin{equation}\label{eqn:ensemble_det_2}
\begin{split}
    \ddt\Bigg(\sum_{j=1}^{N} & \mi(t)\phi(\bxi(t))\Bigg) =\\
     &  -\sum_{j=1}^{N}\mitt\underbrace{\nabla_{\bx}\phi(\bxi(t))\cdot\nabla\f(\bxi(t))}_{\dot{\bx}=-\nabla \f(\bx(t))} - \sum_{j=1}^{N}\phi(\bxi(t))\underbrace{\mi(t)(\f(\bxi(t)) - \fbar(t))}_{\dot{m}=-m(t)(\f(\bx(t)-\fbarnut)},
     \end{split}
\end{equation}
where we integrated the ODE \eqref{eqn:deterministic_system}. Note that the first term in \eqref{eqn:ensemble_det_2} can be written as
\[-\sum_{j=1}^{N}\mi(t)\nabla_{\bx}\phi(\bxi(t))\cdot\nabla\f(\bxi(t)) = -\mathbb{E}_{\nut}[\nabla_{\bx}\phi\cdot \nabla\f] = \int \phi(\bx)\nabla_{\bx}\cdot(\nutx\nabla\f(\bx))\dx\]
using integration-by-parts, and the second term of \eqref{eqn:ensemble_det_2} can be written as
\begin{align*}
\sum_{j=1}^{N}\phi(\bxi(t))\mi(t)(\f(\bxi(t)) - \fbarnut) & = \mathbb{E}_{\nut}[\phi \f] - \mathbb{E}_{\nut}[\phi]\mathbb{E}_{\nut}[\f] \\
& = \int \nutx\phi(\bx)\f(\bx)\dx - \fbarnut\int\phi(\bx)\nu(\bx)\dx.
\end{align*}
Plugging the latter into \eqref{eqn:ensemble_det_2} and combining it with \eqref{eqn:ensemble_det_1} we recover \eqref{eqn:Fokker-Planck_det}.
\end{proof}
Some properties of this continuous limit are straightforward. In particular, obvious observations made  earlier in our discussion yield the following.
\begin{proposition}
Let $\nu$ be the mean-field limit satisfying \eqref{eqn:Fokker-Planck_det}. There holds:\newline
  $\bullet$ {Mass conservation}:
    the total mass $\ds M(t) := \int \nutx\dx$, is conserved in time,
    $\ddt M(t) = 0$; \newline
 $\bullet$ {Decrease of provisional mean}: the weighted mean \(\fbarnut\) decreases in time, $\ddt\fbarnut \leq 0$.
\end{proposition}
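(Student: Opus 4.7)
The plan is to derive both claims directly from the Vlasov-type PDE \eqref{eqn:Fokker-Planck_det} by testing against appropriate functions, mirroring (in the continuous setting) the discrete computation carried out in Proposition~\ref{thm:deterministic}. Throughout, I will assume enough decay of $\nutx$ at infinity (guaranteed, for instance, by compact support of $\rho_0$ and the boundedness assumptions on $\f$ and $\nabla\f$) so that all boundary terms in the integration by parts vanish.

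For the mass conservation claim, I would integrate \eqref{eqn:Fokker-Planck_det} in $\bx$ and compute
\[
\ddt M(t)=\int \nabla_\bx\cdot(\nutx\nabla\f(\bx))\dx-\int(\f(\bx)-\fbarnut)\nutx\dx.
\]
The first integral vanishes by the divergence theorem. The second integral vanishes because, by the very definition~\eqref{eqn:weighted_ave_cont} of the provisional mean, $\int\f(\bx)\nutx\dx=\fbarnut M(t)$, so the reaction term integrates to $-\fbarnut M(t)+\fbarnut M(t)=0$. This gives $\ddt M(t)=0$.

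For the decrease of $\fbarnut$, since $M(t)\equiv M(0)$ is constant, it suffices to track $I(t):=\int\f(\bx)\nutx\dx=M(0)\fbarnut$. I would multiply \eqref{eqn:Fokker-Planck_det} by $\f(\bx)$ and integrate; integration by parts on the divergence term yields $-\int|\nabla\f|^2\nutx\dx$, while the reaction term contributes $-\int\f(\f-\fbarnut)\nutx\dx$. Writing $\f=(\f-\fbarnut)+\fbarnut$ inside the latter and using $\int(\f-\fbarnut)\nutx\dx=0$, this term simplifies to $-\int(\f-\fbarnut)^2\nutx\dx$. Altogether,
\[
M(0)\ddt\fbarnut=-\int|\nabla\f(\bx)|^2\nutx\dx-\int(\f(\bx)-\fbarnut)^2\nutx\dx\le 0,
\]
which is precisely the continuous analogue of the identity appearing in the proof of Proposition~\ref{thm:deterministic}.

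The computation is essentially routine; the only subtlety is justifying the vanishing boundary terms when integrating by parts (the first integral requires $\nu\nabla\f\to 0$ at infinity, and the manipulation of the $\f$-moment requires $\f\,\nu\nabla\f\to 0$). Under Assumption~\ref{assumption: assmptn2} (boundedness of $\f$ and Lipschitz $\nabla\f$) together with the compact-support hypothesis on $\mu_0$ in Assumption~\ref{assumption: assmptn1} applied in the $\bx$-variable, these boundary terms are controlled, and no additional obstacle arises.
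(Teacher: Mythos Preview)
Your proposal is correct and is precisely the continuous analogue of the discrete computation in Proposition~\ref{thm:deterministic}; since the paper leaves this proposition as an ``obvious observation'' without a written proof, you have simply filled in the expected details. One small remark: the compact-support-in-$\bx$ hypothesis you invoke at the end is actually part of Assumption~\ref{assumption: assmptn2}, not Assumption~\ref{assumption: assmptn1}.
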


\subsection{Proprieties of the stochastic system (interacting agents)}\label{sec:langevin}
We turn to the proof of Theorem \ref{thm:no_gamma}.
\begin{proof}
 We may assume  without loss of generality  that $\f_{*}=0$, and that the total  mass is normalized 
\[
\iint m\dmuxmat{0}=1,
\]
so that $\ds \rho_t(\cdot)=\int m\dmutm$ can be viewed as a probability density function in $\bx$. 
Multiplying~\eqref{eqn:F-P_ex_no_gamma} by $m\f(\bx)$ and integrate, we obtain that, with a few integration by parts:
\[
\ddt\fbarmut=-\iint m|\nabla \f|^2\dmutxm-\iint m(\f-\fbarmut)^2\dmutxm+\iint m\Delta \f\dmutxm\,,
\]
and rewriting it using $\rho$,
\[
\ddt\fbarmut=-\mathbb{E}_{\rho}(|\nabla \f|^2)-\mathrm{Var}_{\rho}(\f)+\mathbb{E}_{\rho}(\Delta \f)\,.
\]
Because $\nabla_\bx \f$ is $L$-Lipschitz according to the first condition in Assumption \ref{assumption: assmptn2} and $\f_*=0$, we obtain
\[
|\nabla \f|^2\leq 2L\f(\bx)\,.
\]
This implies the first term can be lower bounded by the expectation of $\f(\bx)$:
\begin{equation}\label{eqn:first_term_lower_b0und}
-\mathbb{E}_{\rho}(|\nabla \f|^2)\geq -2L \fbarmut\,.
\end{equation}

Since $\bx_\ast$ is the unique global minimal, there exists $\epsilon'>0$ such that
\[
\f^{-1}([0,\epsilon'])\subset B_R(\bx_\ast)\,.
\]
Now, we lower bound the last two terms:
\begin{itemize}
    \item Because $\f\leq D$ according to the second condition in Assumption \ref{assumption: assmptn2} and $\f_*=0$, we obtain
    \[
    \mathrm{Var}_{\rho}(\f)= \mathbb{E}_{\rho}(\f^2)-(\fbarmut)^2\leq D\fbarmut\left(1-\frac{\fbarmut}{D}\right)\leq D\fbarmut\,,
    \]
    where we use Assumption~\ref{assumption: assmptn2}, $0\leq \f \leq D$ in the inequalities. This implies \begin{equation}\label{eqn:second_term_lower_b0und}
    -\mathrm{Var}_{\rho}(\f)\geq -D\fbarmut\,.
    \end{equation}
    \item Because $\f(x)$ is $\xi$-strongly convex in the ball of $B_R(\bx_*)$, we obtain that
    \[
    \Delta \f\geq \xi d,\quad \forall \bx\in B_R(\bx_*)
    \]
    and because $\nabla \f$ is $L$-Lipschitz, we have:
    \[
    \left|\Delta \f\right|\leq dL\,,\forall \bx\in\mathbb{R}^d\,.
    \]
    To lower bound the last term, we first deploy Markov inequality:
    \[
    \mathbb{P}(\bx\in B_R(\bx_\ast))\geq\mathbb{P}(\bx\in \f^{-1}([0,\epsilon']))=1-\mathbb{P}(\f(\bx)\geq \epsilon')\geq 1-\frac{\fbarmut}{\epsilon'}\,.
    \]
    Then, we have
\begin{equation}\label{eqn:third_term_lower_b0und}
    \mathbb{E}_{\rho}(\Delta \f)\geq d\left(\xi\underbrace{\left(1-\frac{\fbarmut}{\epsilon'}\right)}_{\text{contribution from } B_R(\bx_\ast)}-\underbrace{L\left(\frac{\fbarmut}{\epsilon'}\right)}_{\text{contribution outside } B_R(\bx_\ast)}\right)\,.
    \end{equation}
\end{itemize}
Combining \eqref{eqn:first_term_lower_b0und}, \eqref{eqn:second_term_lower_b0und}, and \eqref{eqn:third_term_lower_b0und}, we obtain
\[
\ddt\fbarmut\geq d\xi-\fbarmut\left(2L+D+\frac{d\xi}{\epsilon'}+\frac{dL}{\epsilon'}\right)\,.
\]
Set $\epsilon=d\xi/\left(2L+D+\frac{d\xi}{\epsilon'}+\frac{dL}{\epsilon'}\right)$, we obtain that $\ddt\fbarmut\geq 0$ if $\fbarmut\leq \epsilon$. This proves \eqref{eqn:lower_bound_f_bar}.
\end{proof}

\section{Technical lemmas}\label{sec:appendix_b}
We provide proofs of Lemma~\ref{lem:long_time_lemma_1}-Lemma~\ref{lem:long_time_lemma_2}, and Lemma~\ref{lem:appendix_B} that was deployed wherein.

\begin{proof}[Proof of Lemma~\ref{lem:long_time_lemma_1}]
This is obtained by contradiction. Assuming the statement is not true, meaning there is no such $\epsilon_0$ to make $\fbarmut>\f_*+\epsilon_0$, then $\inf_{t\geq 0}\fbarmut=\f_*$. Considering $\liminf_{t\rightarrow\infty}\fbarmut>\f_*$, there must exist a finite time, denoted as $t_*\geq 0$ such that $\fbarmuat{t_*}=\f_*$. According to Assumption~\ref{assumption: assmptn1}, $\f$ has a unique global minimum at $\bx_*$, so at $t_*$, $\muxmat{t_*}$ must have the form of\footnote{We note here that if there are multiple global minima $\{\bx_k\}_{k=1}^K$, the form of $\muxmat{t_*}$ becomes $\muxmat{t_*}=\sum_{k=1}^{K}\delta_{\bx_k}(\bx)\otimes \eta_k(m)$ where $\sum_k\eta_k$ has measure $1$. The rest of the proof stays the same, with $\bx_*$ replaced by the collection of $\bx_k$. The proof of this lemma depends on Lemma~\ref{lem:appendix_B}, which can also be revised to accommodate the multiple global minima case.}
\begin{equation}\label{eqn:mu_opt_form}
\muxmat{t_*}=\delta_{\bx_*}(\bx)\otimes \eta(m)\,.
\end{equation}
where $\eta$ is a probability measure over $m$-dimension supported on $\mathbb{R}_+$.

We will show below that $\muxmat{t_*}$ in the form of~\eqref{eqn:mu_opt_form} is an equilibrium. If so, $\mutxm=\muxmat{t_*}$ for all $t>t_*$, and thus $\fbarmut=\f_*$ for all time after $t_*$. This contradicts the assumption that $\liminf_{t\rightarrow\infty}\fbarmut>\f_*$, completing the proof.

To show $\muxmat{t_*}$ in the form of~\eqref{eqn:mu_opt_form} is an equilibrium amounts to plugging the form into the equation~\eqref{eqn:mean_field_pde} and show the right hand side vanishes. Since this is a probability-measured solution with a Dirac delta on $\bx$-domain, the proof has to be conducted in the weak sense. Specifically, given any smooth test function $\phi(\bx,m)$, we show the right hand side of~\eqref{eqn:mean_field_pde}, when tested by $\phi(\bx,m)$, vanishes:
\begin{equation}\label{eqn:evolu_phi_new}
\begin{aligned}
    \iint& \mathrm{R.H.S}~\eqref{eqn:mean_field_pde}\left.\right|_{\mu=\delta_{\bx_*}(\bx)\otimes \eta(m)}\times\phi(\bx,m)\nd{\bx}\nd{m}\\
    =&\underbrace{-\iint  (\f(\bx) - \fbarmut)m\partial_m \phi(\bx,m)\dmutxm}_{\text{Term I}}\\
    &-\iint \left(\underbrace{\nabla_\bx \phi(\bx,m)\cdot \nabla \f(\bx)}_{\text{Term II}}+\underbrace{\gammab(m)\Delta_\bx \phi(\bx,m)}_{\text{Term III}}\right)\dmutxm\,.
\end{aligned}
\end{equation}
We will show all three terms are zero. Indeed,
\begin{itemize}
    \item Since $\mut$ takes on the form of~\eqref{eqn:mu_opt_form},
    \begin{equation}\label{eqn:term_I}
    \begin{aligned}
            \mathrm{Term\ I}=&\iint  (\f(\bx) - \fbarmut) m\partial_m \phi(\bx,m)\dmutxm \\
            = &\iint  (F_* - F_*)m\partial_m \phi(\bx,m)\nd\eta(m)=0\,.
    \end{aligned}
    \end{equation}
    \item Similarly,
    \begin{equation}\label{eqn:term_II}
    \begin{aligned}
        \mathrm{Term\ II}=&\iint \nabla_\bx \phi(\bx,m)\cdot \nabla \f(\bx)\delta_{x_*}(\bx)\nd{\bx}\nd\eta(m)\\
        =&\nabla \f(\bx_*)\cdot\int \nabla_\bx \phi(\bx_*,m)\nd\eta(m)=0\,.    
    \end{aligned}
    \end{equation}
    \item To show $\mathrm{Term\ III}=0$, we note that
    \begin{equation}\label{eqn:term_III}
    \begin{aligned}
    \mathrm{Term\ III}=&\iint \gammab(m)\Delta_\bx \phi(\bx,m)\delta_{x_*}(\bx)\nd{\bx}\nd\eta(m)\\
    =&\int \sigma(m)\Delta_x\phi(\bx_*,m)\nd\eta(m)=0\,,
    \end{aligned}
    \end{equation}
    while in the last equation, we used Lemma~\ref{lem:appendix_B} and that $ \mathrm{Supp}(\eta(m))\subset[m_c,\infty)$.
\end{itemize}
Plugging~\eqref{eqn:term_I}-\eqref{eqn:term_III} into~\eqref{eqn:evolu_phi_new}, it is clear that $\muxmat{t_*}=\delta_{\bx_*}(\bx)\otimes \eta(m)$ is an equilibrium, and thus $\mutxm=\muxmat{t_*}$ for all $t>t_*$, making $\fbarmut=\f_*$ and contradicting the assumption that $\liminf_{t\rightarrow\infty}\fbarmut>\f_*$.
\end{proof}

\begin{proof}[Proof of Lemma~\ref{lem:long_time_lemma_2}]
According to Assumption~\ref{assumption: assmptn1}, the initial distribution satisfies $(m_c,\infty)\cap\mathrm{supp}_m(\muxmat{0})\neq\emptyset$, so using the diffusivity property in $\bx$-direction of the Fokker-Planck equation, for $\Nhood_{\epsilon/2}$ as defined in~\eqref{eqn:Nhood_eps}, there must exist $T_c>0$ such that
\[
\int_{\Nhood_{\epsilon/2}}\int^\infty_{m_c}\dmuxmat{T_c}>0\,,
\] 
which means there is no trivial probability at $T_c$ to find a sample in the set of $\Nhood_{\epsilon/2}\times(m_c,\infty)$. Denote one such sample $(\bx_s(T_c),m_s(T_c))$, we are to show that
\begin{enumerate}
    \item[(a)] $(\bx_s,m_s)(t)\in \Nhood_{\epsilon/2}\times(m_c,\infty)$ for all $t>T_c$;
    \item[(b)] $m_s(t)>\exp(\frac{\epsilon}{2}(t-T_c))m_s(T_c)>\exp(\frac{\epsilon}{2}(t-T_c))m_c$ for any $t>T_c$.
\end{enumerate}
Indeed, we recall its trajectory governed by the SDE:
\[
\left\{\begin{array}{rcl}
   \nd \bx_s(t)  & = & -\nabla \f(\bx_s(t))\dt + \sqrt{2\gammab(m_s(t))}\dW_t\\
   \ddt m_s(t) & = & -m_s(t)\left(\f(\bx_s(t)) - \fbarmut\right)\,.
\end{array}\right.
\]
Since $\bx_s(T_c)\in \Nhood_{\epsilon/2}$ and $m_s(T_c)>m_c$, then noting $\gammab(m_s(T_c))=0$, we obtain
\[
\left.\frac{\nd \bx_s(t)}{\dt}\right|_{t=T_c}=-\nabla \f(\bx_s(t)),\quad \left.\frac{\nd m_s(t)}{\dt}\right|_{t=T_c}>\frac{\epsilon}{2} m_s(t)
\]
where we used the fact that $\fbarmut>\f_*+\epsilon$ while $\f(\bx_s(T_c))<\f_*+\epsilon/2$ to obtain the equation for $m_s$. This dynamics suggests that $\f(\bx_s(t))$ keeps decreasing so $\bx_s\in\Nhood_{\epsilon/2}$, and $m_s(t)$ keeps increasing with an exponential rate:
\[
m_s(t)>\exp\left(\frac{\epsilon}{2}(t-T_c)\right)m_s(T_c)> m_c\,,
\]
confirming (a)-(b). Accordingly,
\begin{equation}\label{eqn:total_mass_lower}
\begin{aligned}
\int \limits_{\Nhood_{\epsilon/2}}&\int \limits^\infty_{m_c}m\dmutxm
   =\mathbb{E}(m(t)|\bx(t)\in \Nhood_{\epsilon/2},m(t)>m_c)\mathbb{P}\left(\left\{\bx(t)\in \Nhood_{\epsilon/2},m(t)>m_c\right\}\right)\\
 \ \geq& \mathbb{E}(m(t)|\bx(T)\in \Nhood_{\epsilon/2},m(T)>m_c)\mathbb{P}\left(\left\{\bx(T)\in \Nhood_{\epsilon/2},m(T)>m_c\right\}\right)\\
\ \geq& \mathbb{E}(\exp(\frac{\epsilon}{2}(t-T))m(T)|\bx(T)\in \Nhood_{\epsilon/2},m(T)>m_c)\mathbb{P}\left(\left\{\bx(T)\in \Nhood_{\epsilon/2},m(T)>m_c\right\}\right)\\
\ =&\exp(\frac{\epsilon}{2}(t-T))\int_{\Nhood_{\epsilon/2}}\int^\infty_{m_c}m\dmuxmat{T}\,,
\end{aligned}
\end{equation}
where we use (a) in the first inequality and (b) in the second inequality. This leads to the fact that
\[
\limsup_{t\rightarrow\infty}\int_{\mathbb{R}^d}\int^\infty_{0}m\dmutxm\geq \limsup_{t\rightarrow\infty}\int_{\Nhood_\epsilon}\int^\infty_{m_c}m\dmutxm=\infty\,,
\]
completing the proof.
\end{proof}

\begin{lemma}\label{lem:appendix_B}
    Under the condition of Theorem~\ref{thm:long_time_thm}. Let $\mutxm$ solve~\eqref{eqn:mean_field_pde}. If at a time $t_*$ so that $\muxmat{t_*}$ takes on the form of $\delta_{\bx_*}\otimes\eta(m)$ for some probability measure $\eta$, then $ \mathrm{Supp}(\eta(m))\subset[m_c,\infty)$.\footnote{We note that in the statement of the lemma, we assume $\muxmat{t_*}$ has a form of one Dirac delta located on $\bx_*$. In the situation when there are multiple global minima $\{\bx_k\}_{k=1}^K$, the proof can be adjusted to handle the form of $\muxmat{t_*}=\sum_{k=1}^K\delta_{\bx_k}\otimes\eta_k(m)$, with the conclusion being $\mathrm{Supp}(\eta_k(m))\subset[m_c,\infty)$ for all $k$. In the proof, we should employ a list of test functions with each taking the form of $\phi_k(\bx,m)=|\bx-\bx_k|^2\xi_k(\bx)$.}
\end{lemma}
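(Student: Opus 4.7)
The plan is to argue by contradiction. Suppose $\mathrm{Supp}(\eta)\cap(0,m_c)\neq\emptyset$. Since $\gammab>0$ on $(0,m_c)$ (the cutoff $m_c$ being the right endpoint of its support), there is a compact subinterval $I\subset(0,m_c)$ on which $\eta(I)>0$ and $\gammab$ is uniformly positive. Following the hint in the footnote, the idea is to test the PDE against
\[
\phi(\bx,m)=|\bx-\bx_*|^2\,\psi(m),
\]
where $\psi\geq 0$ is smooth, compactly supported in $(0,m_c)$, and satisfies $\int\psi\,\nd\eta>0$ (e.g., $\psi\equiv 1$ on $I$, then cut off smoothly before $m_c$). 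If $\mu$ has unbounded spatial support, I would additionally multiply by an $\bx$-cutoff to legitimize integration by parts; under Assumption~\ref{assumption: assmptn2} the support stays in a bounded set, so this is harmless.

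Next, I would define the nonnegative quadratic moment
\[
V(t):=\iint \phi(\bx,m)\,\dmutxm\geq 0.
\]
The assumed form $\muxmat{t_*}=\delta_{\bx_*}(\bx)\otimes\eta(m)$ gives $V(t_*)=0$, so $t_*$ is a global minimum of $V$. Computing $\ddt V(t_*)$ by the weak form of~\eqref{eqn:mean_field_pde}, I would reproduce the Term I / Term II / Term III decomposition of the proof of Lemma~\ref{lem:long_time_lemma_1}. At $t=t_*$ the drift term vanishes because $\nabla_\bx|\bx-\bx_*|^2=2(\bx-\bx_*)$ is zero on $\mathrm{supp}_\bx(\muat{t_*})=\{\bx_*\}$, and the mass-exchange term vanishes because $|\bx-\bx_*|^2=0$ on that support. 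Only the diffusion term survives, and a two-step integration by parts in $\bx$ gives
\[
\ddt V(t_*)=\iint \gammab(m)\,\Delta_\bx\phi\,\dmuxmat{t_*}=2d\int \gammab(m)\,\psi(m)\,\nd\eta(m)>0,
\]
the strict positivity coming from the choice of $\psi$ relative to $I$.

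Finally, I would combine $V\geq 0$, $V(t_*)=0$, and $\ddt V(t_*)>0$ to obtain a contradiction: using the integrated form
\[
V(t_*-h)=V(t_*)-\int_{t_*-h}^{t_*}\partial_s V(s)\,\nd s
\]
together with continuity of the integrand near $t_*$, the right-hand side becomes strictly negative for all sufficiently small $h>0$, which is impossible. The main obstacle is precisely this last step --- promoting the PDE-based expression for $\partial_s V(s)$ into an honest, time-continuous integrand near $t_*$. Under Assumption~\ref{assumption: assmptn2} (bounded support of $\muat{0}$, Lipschitz/bounded $\f,\nabla\f,\sqrt{\gammab}$), I expect this to follow from a standard mollification / weak-continuity argument on the weak solution $\mu$; everything else in the proof is purely algebraic. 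The multi-minimum extension flagged in the footnote is handled identically by replacing $\phi$ with $|\bx-\bx_k|^2\,\xi_k(\bx)\,\psi(m)$ for a localizing cutoff $\xi_k\equiv 1$ near $\bx_k$ and arguing separately for each $\eta_k$.
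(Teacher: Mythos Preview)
Your approach is essentially the paper's: test the PDE against $|\bx-\bx_*|^2$ times a cutoff, observe that the resulting nonnegative moment $V(t)$ vanishes at $t_*$, so $V'(t_*)=0$, and since the drift and mass-exchange contributions vanish at $\delta_{\bx_*}\otimes\eta$, the diffusion term must vanish too. Two small remarks. First, the $m$-cutoff $\psi(m)$ and the contradiction framing are unnecessary: the paper takes $\phi_*(\bx,m)=|\bx-\bx_*|^2\xi(\bx)$ with no $m$-dependence, so $\partial_m\phi_*=0$ and the mass-exchange term drops out for free; the vanishing of the diffusion term then reads directly as $\int\gammab(m)\,\nd\eta(m)=0$, which forces $\mathrm{Supp}(\eta)\subset\{\gammab=0\}\subset[m_c,\infty)$ since $\gammab\geq 0$. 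Second, your claim that under Assumption~\ref{assumption: assmptn2} the spatial support of $\mu_t$ stays bounded is not right --- the Brownian motion immediately spreads mass over all of $\mathbb{R}^d$ --- so the $\bx$-cutoff $\xi(\bx)$ (which you flagged as optional) is in fact the device the paper uses and should be kept.
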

\begin{proof}
This can be seen by deploying a special test function. Set the test function $\phi_*(\bx,m)=|\bx-\bx_*|^2\xi(\bx)$, where $\xi(\bx)$ is a compactly supported, non-negative, smooth function such that $\xi(\bx)=1$ in a neighborhood of $\bx_*$. Naturally by definition, $\partial_m\phi_*(\bx,m)=0$. Testing it on the mean-field PDE~\eqref{eqn:mean_field_pde}, we obtain that
\begin{equation}\label{eqn:evolu_phi}
\begin{aligned}
    & \underbrace{\ddt\iint \phi_*(\bx,m)\dmuxmat{t_*}}_{\mathrm{Term\ I}}\\
     & \qquad = -\iint \left(\underbrace{\nabla \phi_*(\bx,m)\cdot \nabla \f(\bx)}_{\mathrm{Term\ II}}+\sigma(m)\Delta \phi_*(\bx,m)\right)\dmuxmat{t_*}\,,
    \end{aligned}
\end{equation}
where the $\big(\f(\bx)-\fbarmut\big)\partial_{m}(m\mu)\left.\right|_{\muxmat{t_*}}=0$ and is dropped out upon testing, using the same argument that showed~\eqref{eqn:term_I}. We will show that both $\mathrm{Term\ I}$ and $\mathrm{Term\ II}$ are zero.

Indeed, denote $A(t) := \iint \phi_*(\bx,m)\dmutxm$. According to the definition of $\phi_*$, $A(t)\geq 0$ for all $t$. Meanwhile, $A(t_*) = \int \phi_*(\bx_*,m)\nd\eta(m)=0$. As a consequence,
\[
\mathrm{Term\ I}=\ddt\iint \phi_*(\bx,m)\dmuxmat{t_*}=\ddt A(t_*)=0\,.
\]
Similarly $\mathrm{Term\ II}=0$ is due to the fact that $\nabla \f(\bx_*)=0$.

These together suggests $\iint \sigma(m)\Delta \phi_*(\bx,m)\dmuxmat{t_*}=0$. Considering $\muxmat{t_*}=\delta_{\bx_*}\otimes\eta(m)$ and that $\Delta\phi_*(\bx_*) = 2d$, we have $\int\sigma(m)\nd\eta(m)=0$, and thus $\mathrm{Supp}(\eta(\cdot))\subset[m_c,\infty)$.
\end{proof}

\section{Numerical Experiments}\label{sec:more_num_exp}
In this section we present further numerical evidence of the performance of our Swarm-based Simulated Annealing (SSA) Algorithm \ref{algo:SSA} for different types of 2D non-convex functions. We used~\eqref{eqn:gamma_smooth} as our $\gammab$ function with different parameters and different number of particles in each case. These examples are selected from~\cite{opt_tests}, and in each plot below, we show the landscape of the objective function, the convergence of the provisional minimum in time, and four time snapshots of available samples. At initial time, the samples are purposely removed from the global basin.

\begin{figure}[ht]
    \centering
    \includegraphics[width=0.3\linewidth]{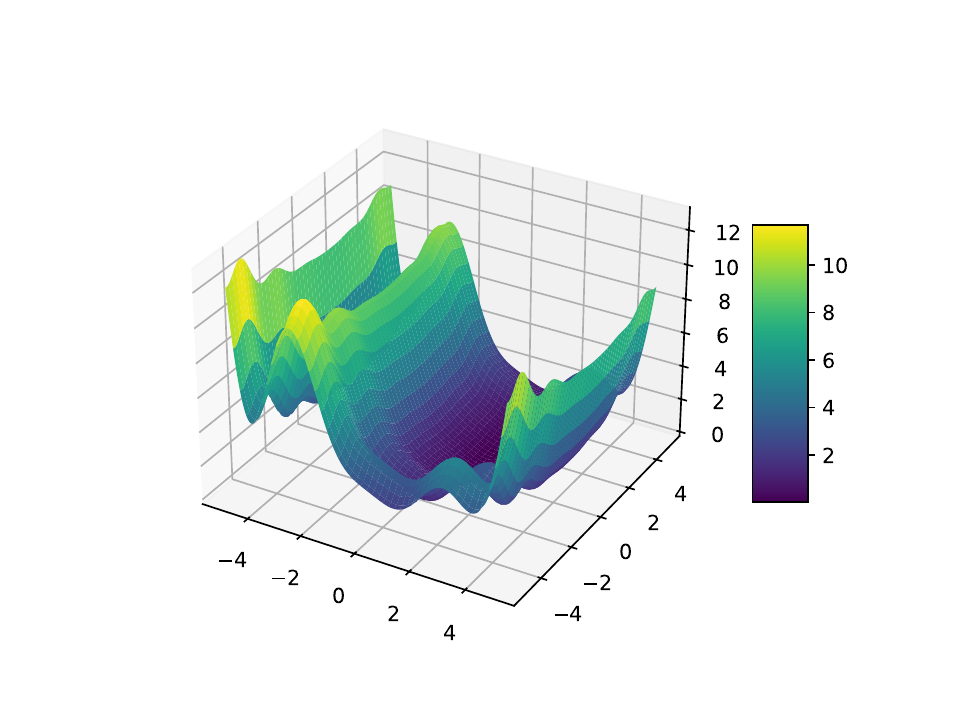}
    \includegraphics[width=0.3\linewidth]{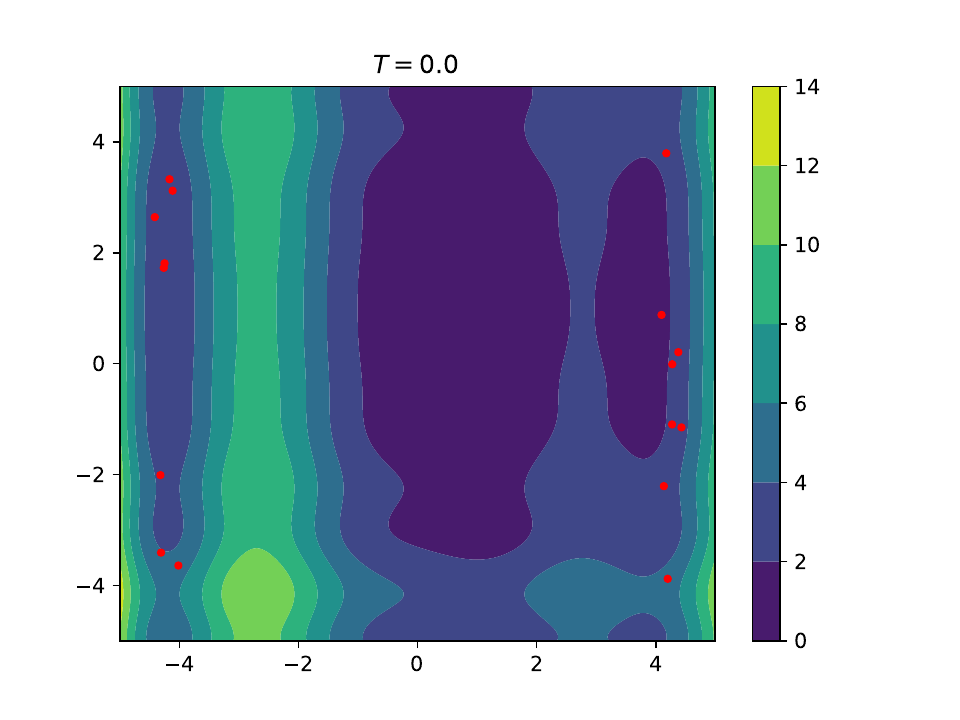}
    \includegraphics[width=0.3\linewidth]{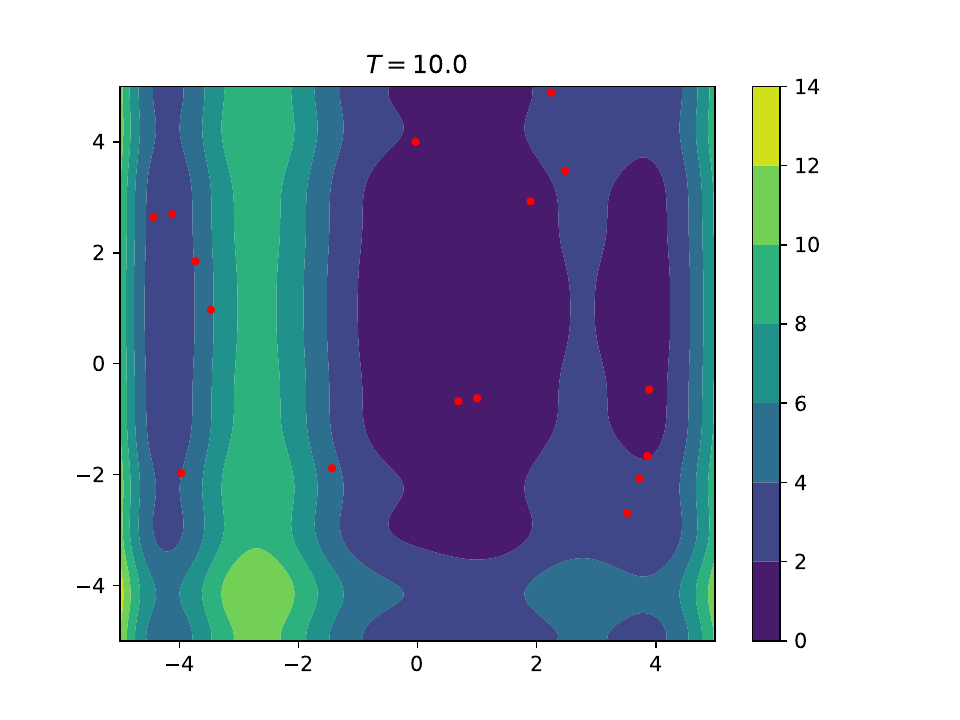}
        \includegraphics[width=0.3\linewidth]{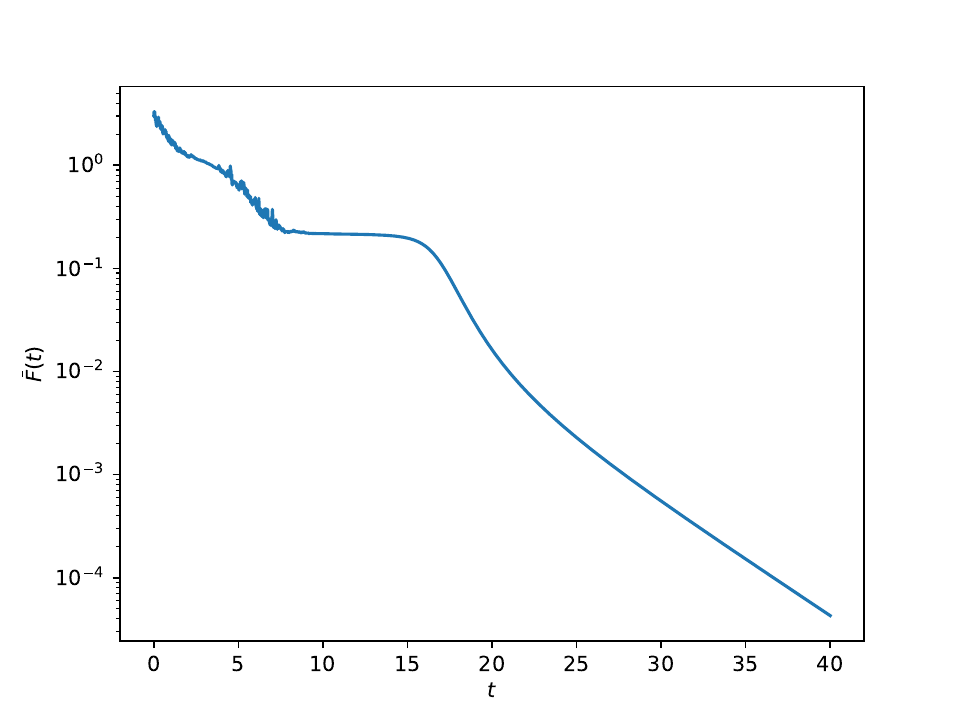}
    \includegraphics[width=0.3\linewidth]{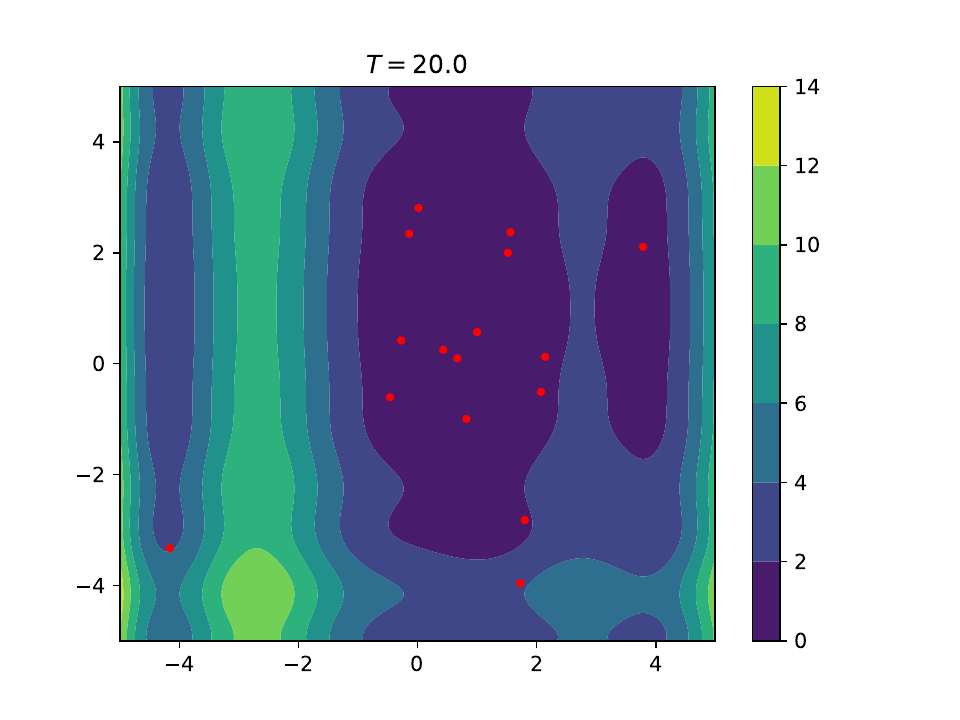}
    \includegraphics[width=0.3\linewidth]{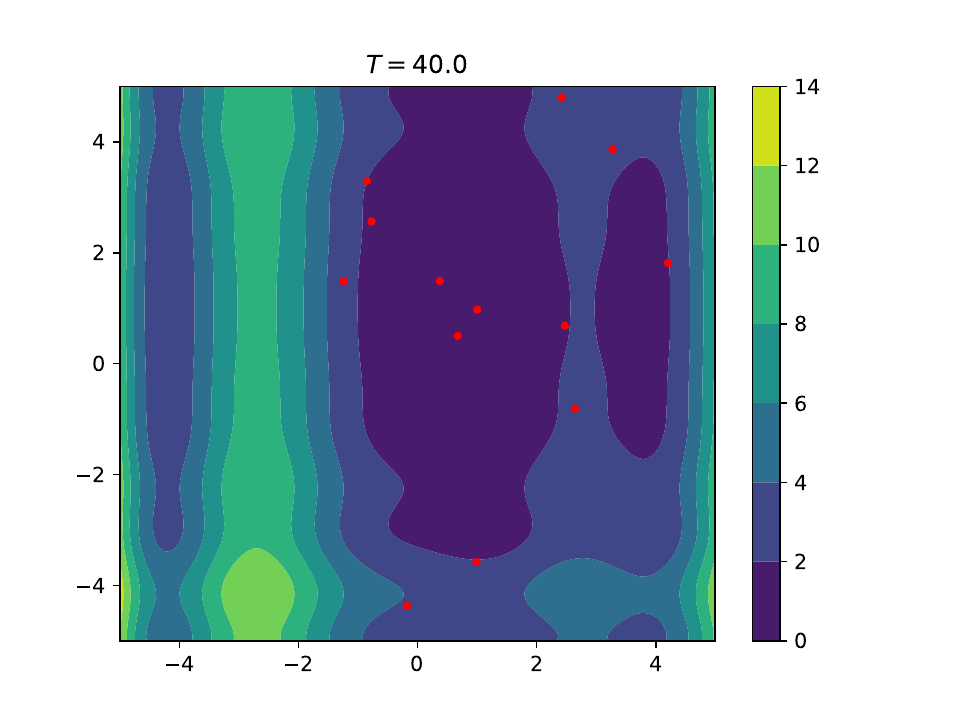}
    \caption{SSA applied to a Levy function: $\f(x,y) = \sin^{2}(\pi w(x)) + (w(x)-1)^{2}[1+10\sin^{2}(\pi w(x)+1)] + (w(y)-1)^{2}[1+\sin^{2}(2\pi w(y))]$, with  $w(x) = 1 + \frac{x-1}{4}$. The parameters used are $N=16$, $\alphalam=2$ and $\betamu=1/8$. The global minimum is $\f(x_\ast,y_\ast) = 0$ at $(x_\ast,y_\ast) = (1,1)$. Top Left: landscape of he function. Lower Left: long time behavior. Center and Right: swarm movement in time.}
    \label{fig:Levy2D}
\end{figure}

\begin{figure}[ht]
    \centering
    \includegraphics[width=0.3\linewidth]{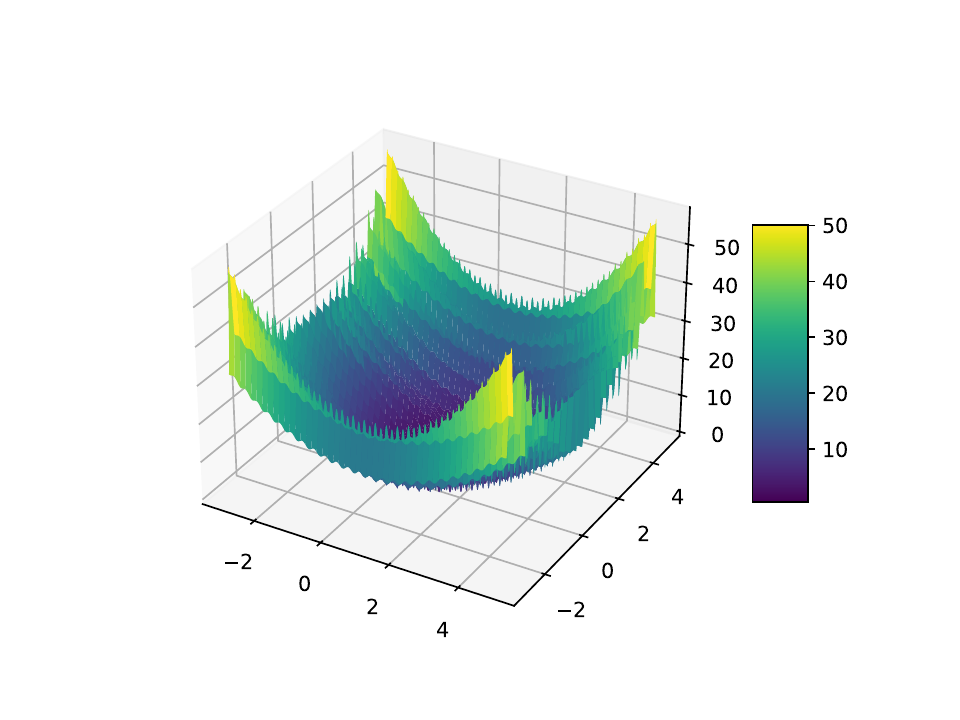}
    \includegraphics[width=0.3\linewidth]{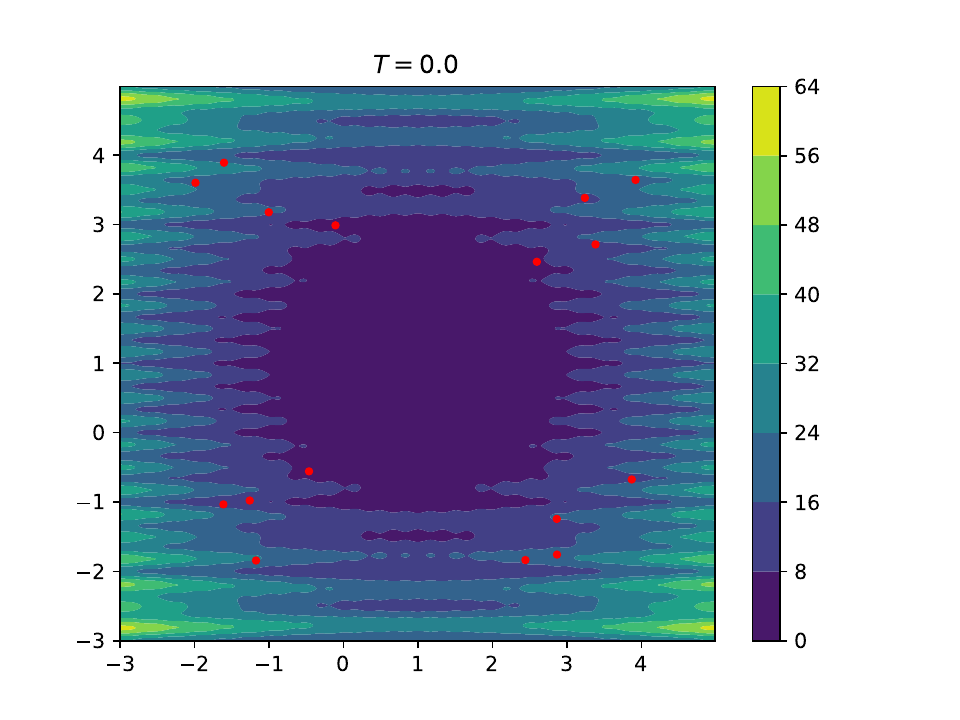}
    \includegraphics[width=0.3\linewidth]{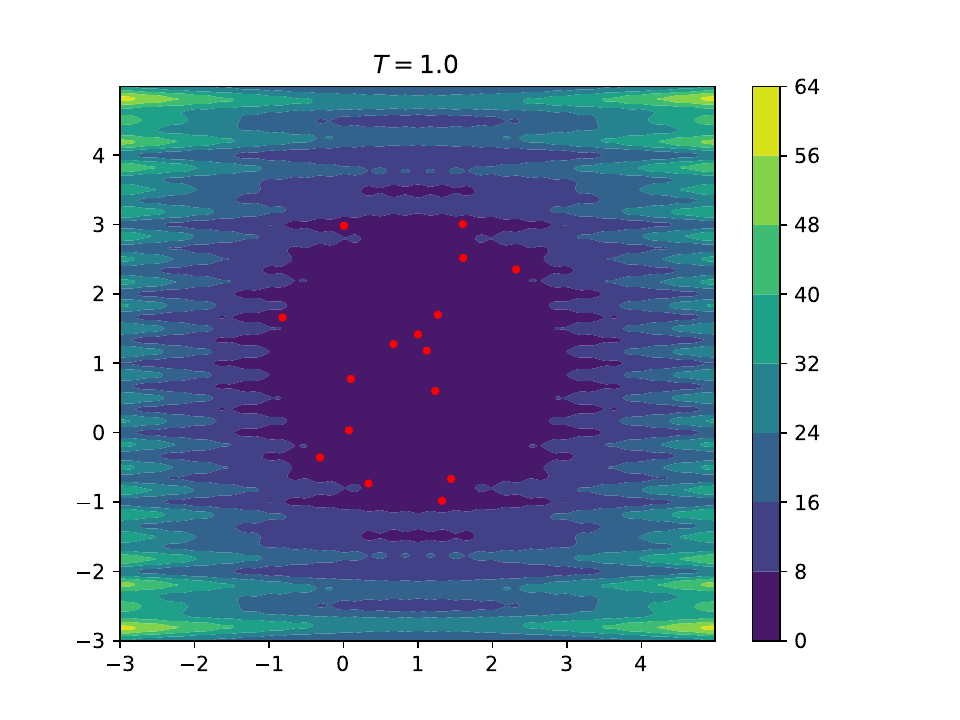}
        \includegraphics[width=0.3\linewidth]{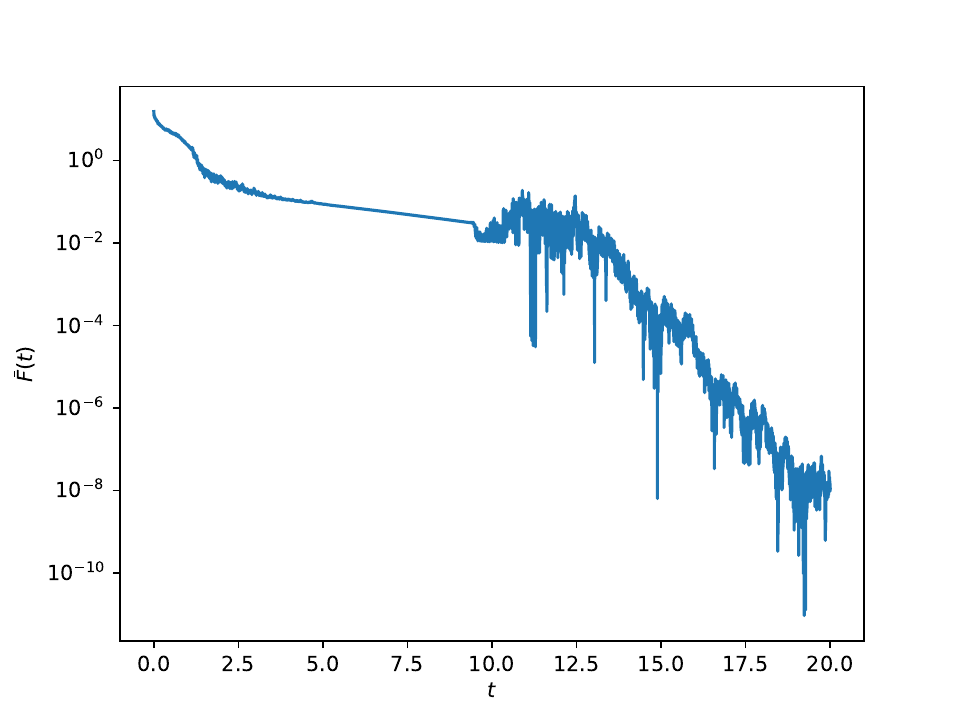}
    \includegraphics[width=0.3\linewidth]{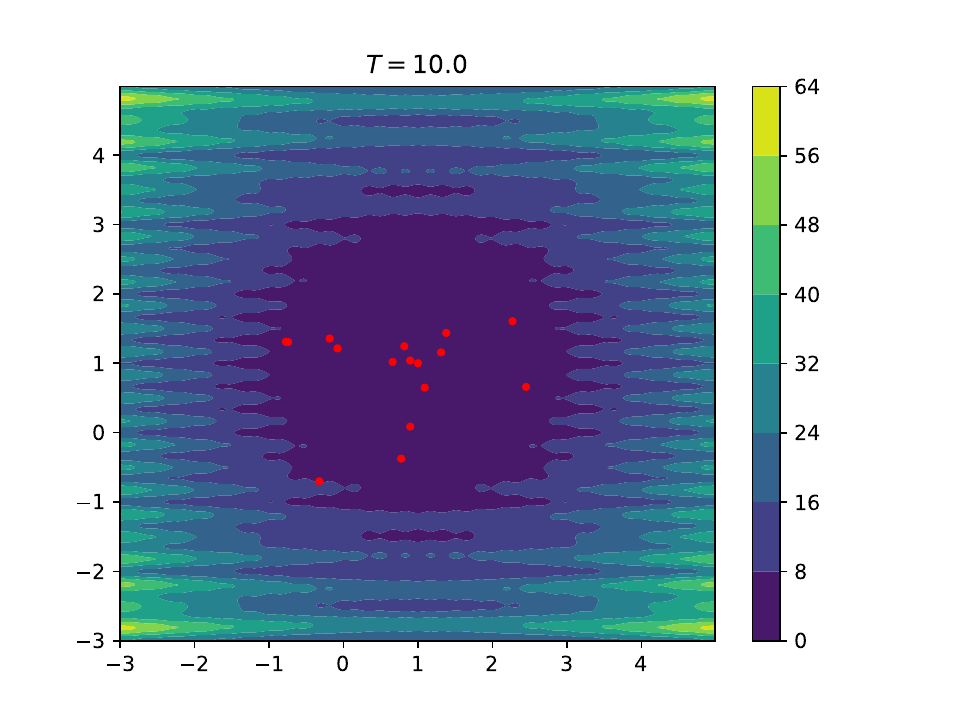}
    \includegraphics[width=0.3\linewidth]{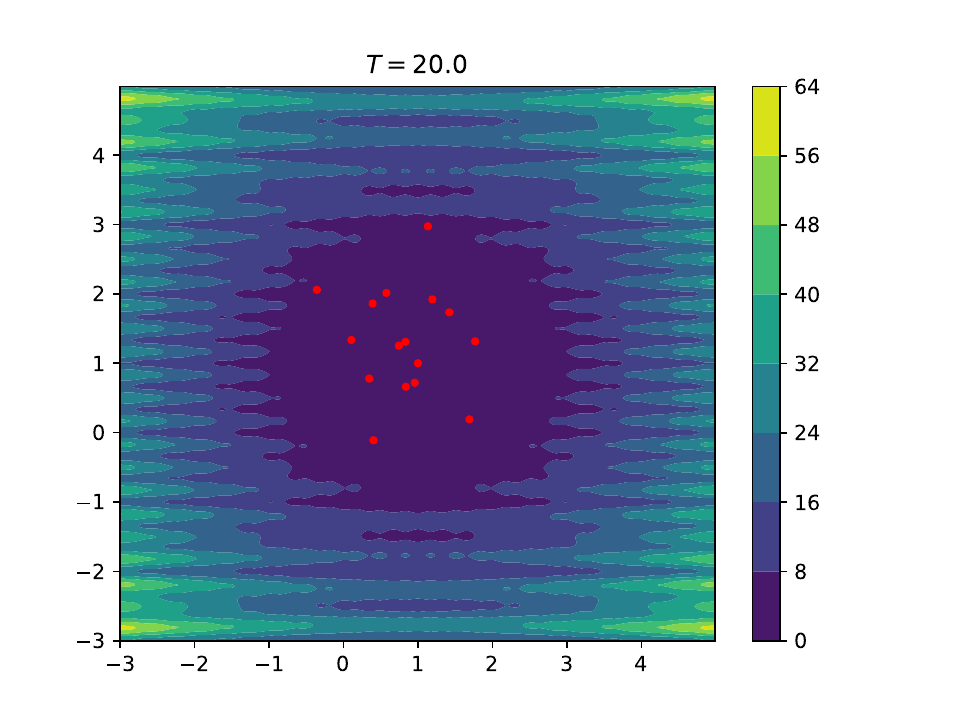}
    \caption{SSA applied to a Levy N.13 function: $\f(x,y) = \sin^{2}(3\pi x) + (x-1)^{2}[1+\sin^{2}(3\pi y)] + (y-1)^{2}[1+\sin^{2}(2\pi y)]$. The parameters used are $N=16$, $\alphalam=2$ and $\betamu=1/8$. The global minimum is $\f(x_\ast,y_\ast) = 0$ at $(x_\ast,y_\ast) = (1,1)$. Top Left: landscape of he function. Lower Left: long time behavior. Center and Right: swarm movement in time. Similar to Figure~\ref{fig:Levy2D}, we can see the provisional minimum converges to the global minimum as the particles are able to accumulate around the global minimum.}
    \label{fig:LevyN132D}
\end{figure}

\begin{figure}[ht]
    \centering
    \includegraphics[width=0.3\linewidth]{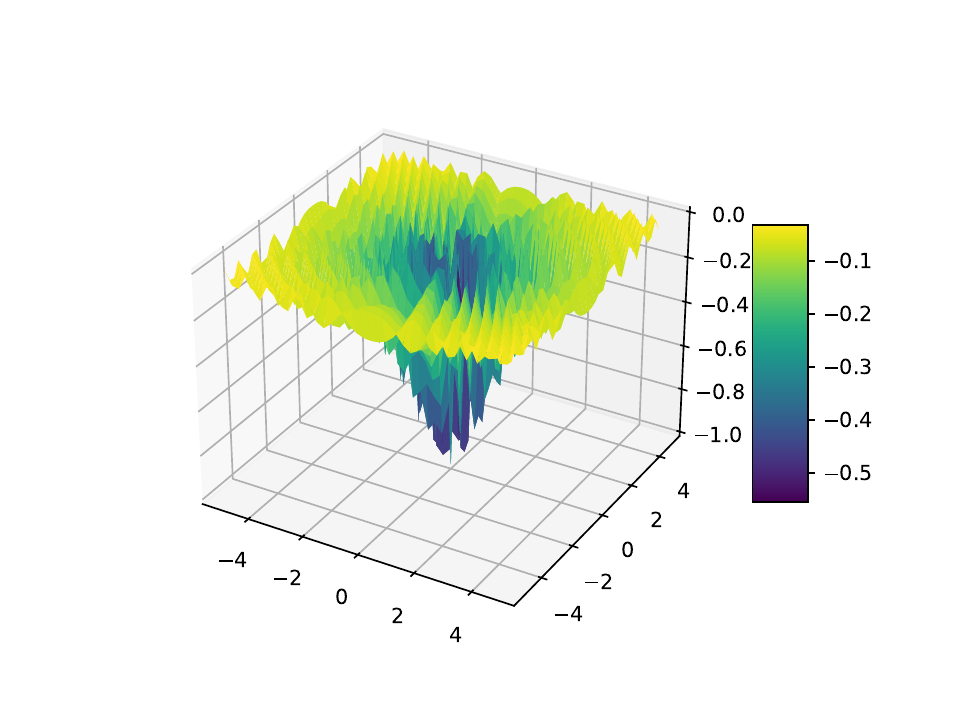}
    \includegraphics[width=0.3\linewidth]{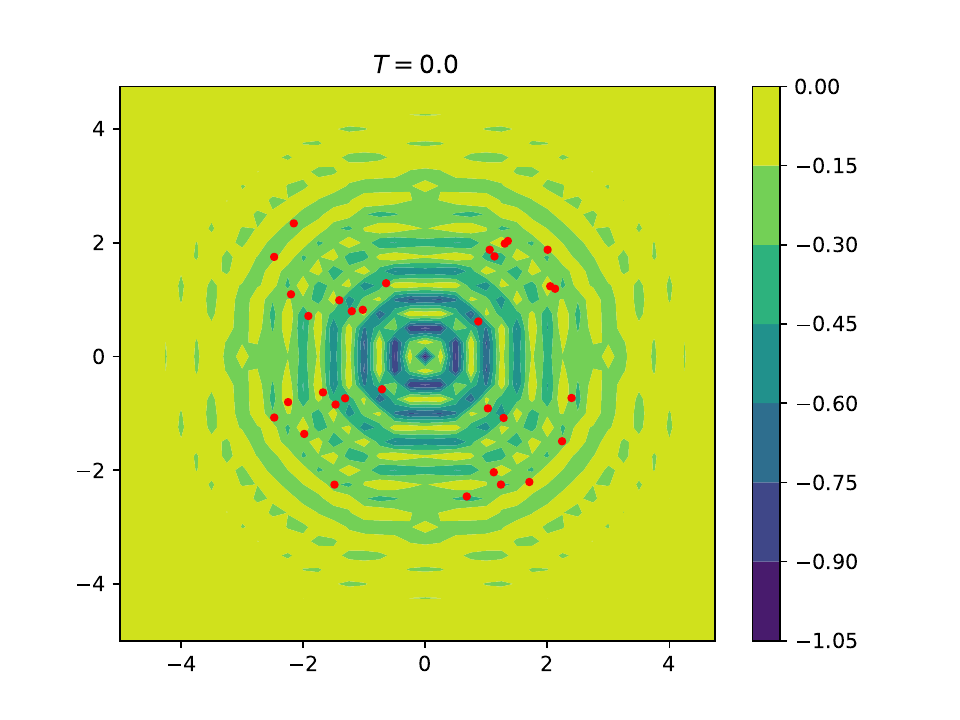}
    \includegraphics[width=0.3\linewidth]{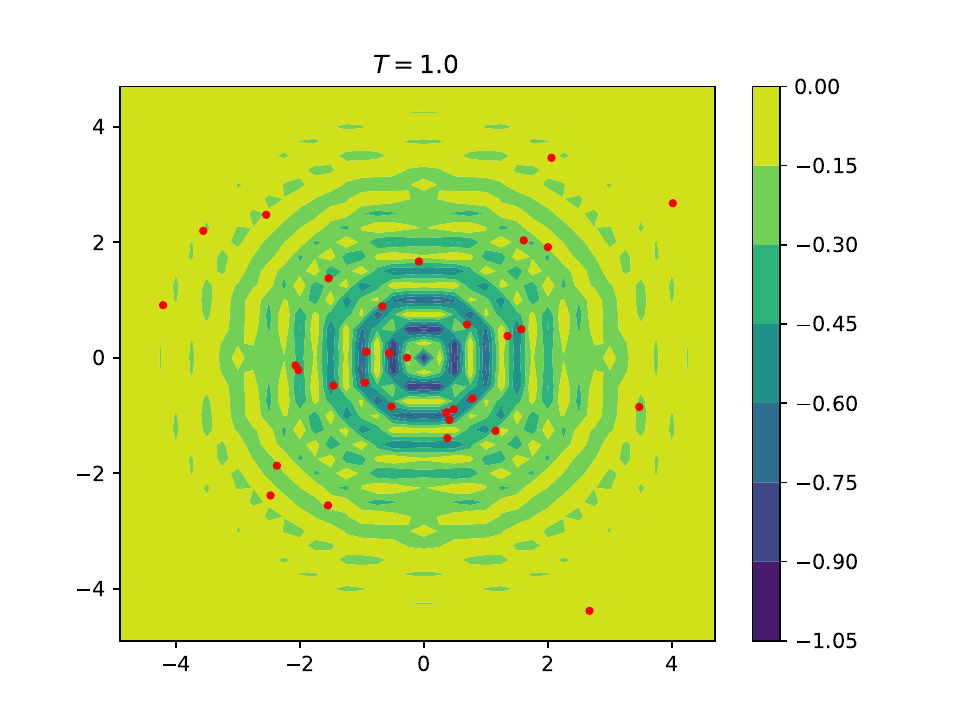}
        \includegraphics[width=0.3\linewidth]{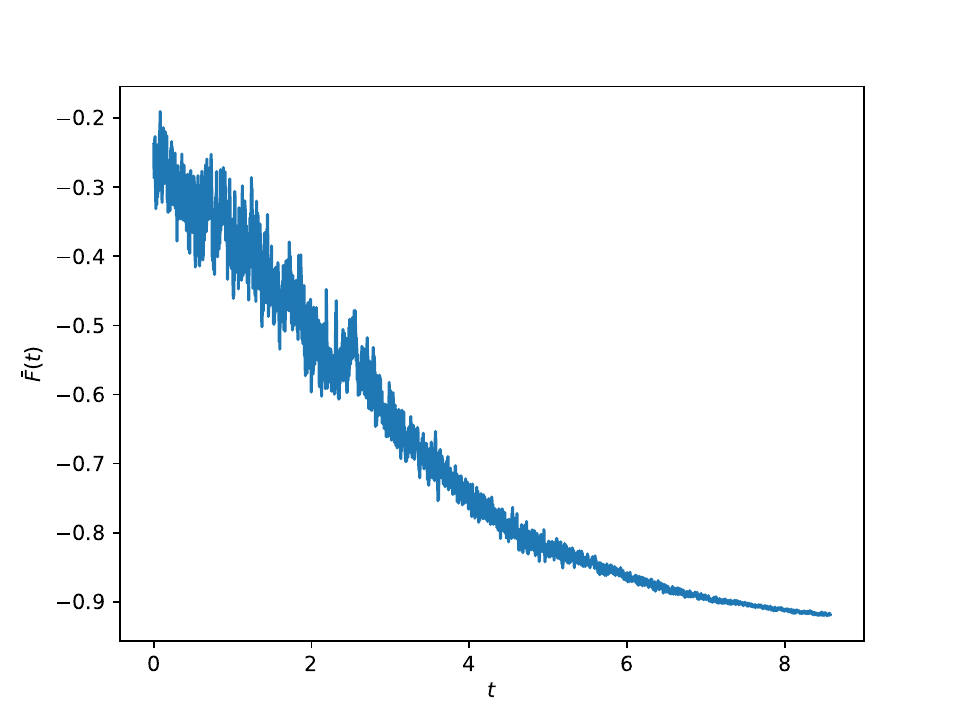}
    \includegraphics[width=0.3\linewidth]{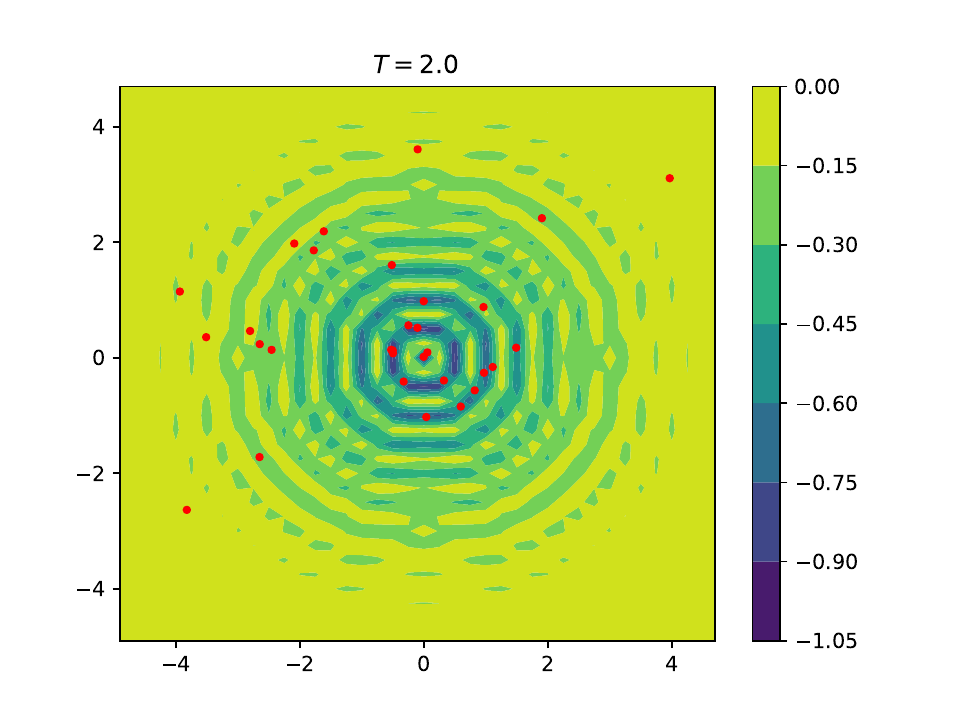}
    \includegraphics[width=0.3\linewidth]{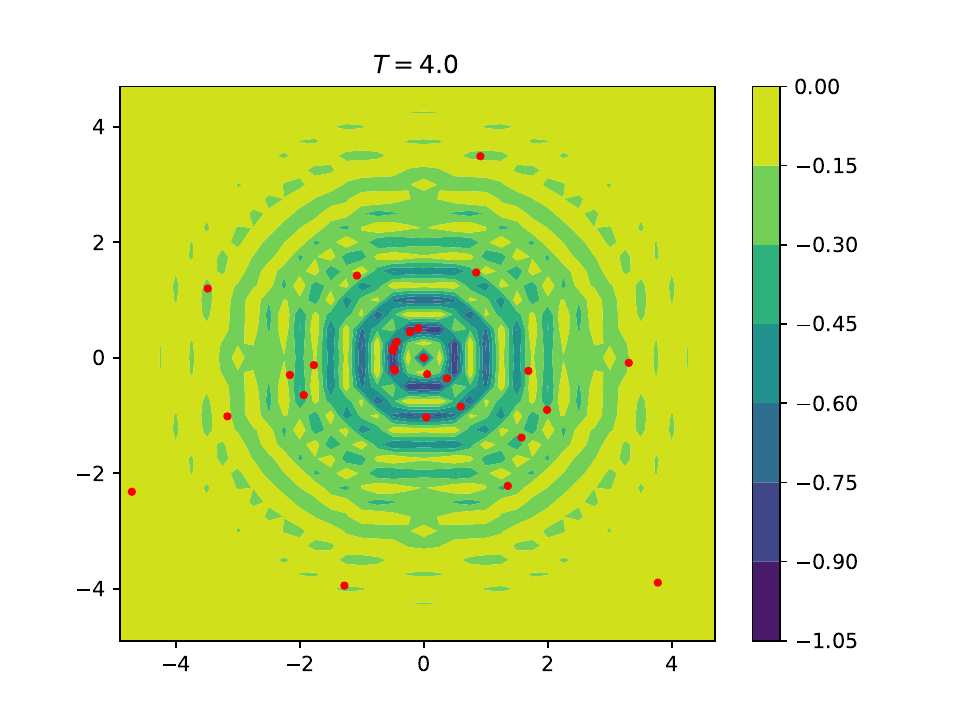}
    \caption{SSA applied to a Drop-Wave function: $\f(x,y) =  -\frac{1 + \cos(12\sqrt{x^2 + y^2})}{0.5(x^2 + y^2) + 2}$. The parameters used are $N=32$, $\alphalam=2$ and $\betamu=1/16$. The global minimum is $\f(x_\ast,y_\ast) = -1$ at $(x_\ast,y_\ast) = (0,0)$. Top Left: landscape of he function. Lower Left: long time behavior. Center and Right: swarm movement in time. Top Left: landscape of he function. Lower Left: long time behavior. Center and Right: swarm movement in time. This is a challenging example since the global basin is very small.}
    \label{fig:DropWave2D}
\end{figure}

\begin{figure}[ht]
    \centering
    \includegraphics[width=0.3\linewidth]{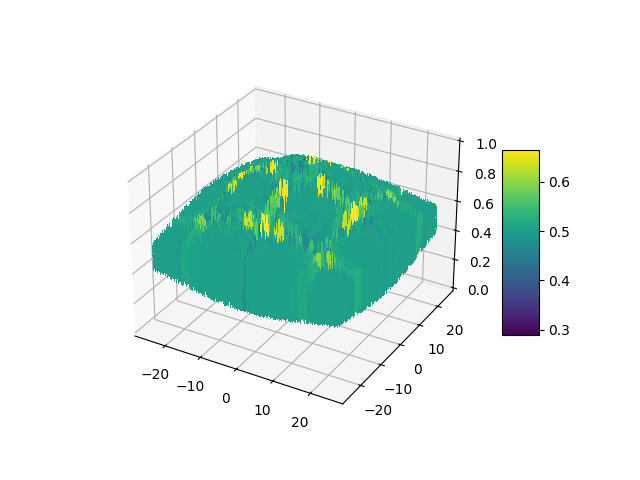}
    \includegraphics[width=0.3\linewidth]{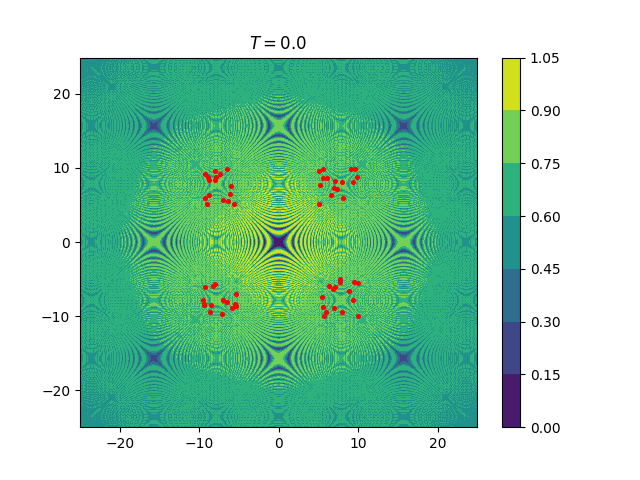}
    \includegraphics[width=0.3\linewidth]{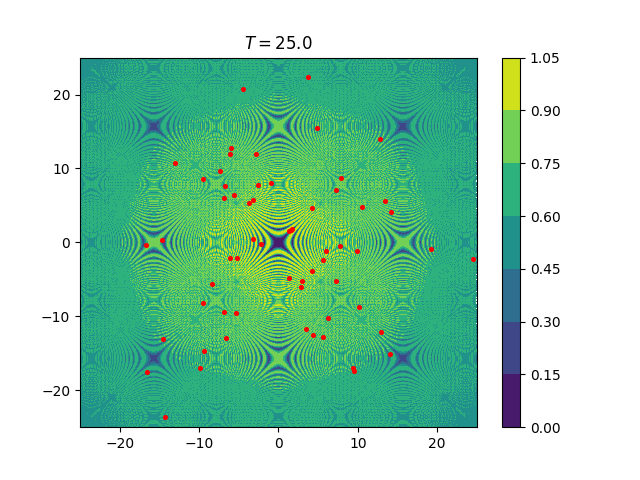}
        \includegraphics[width=0.3\linewidth]{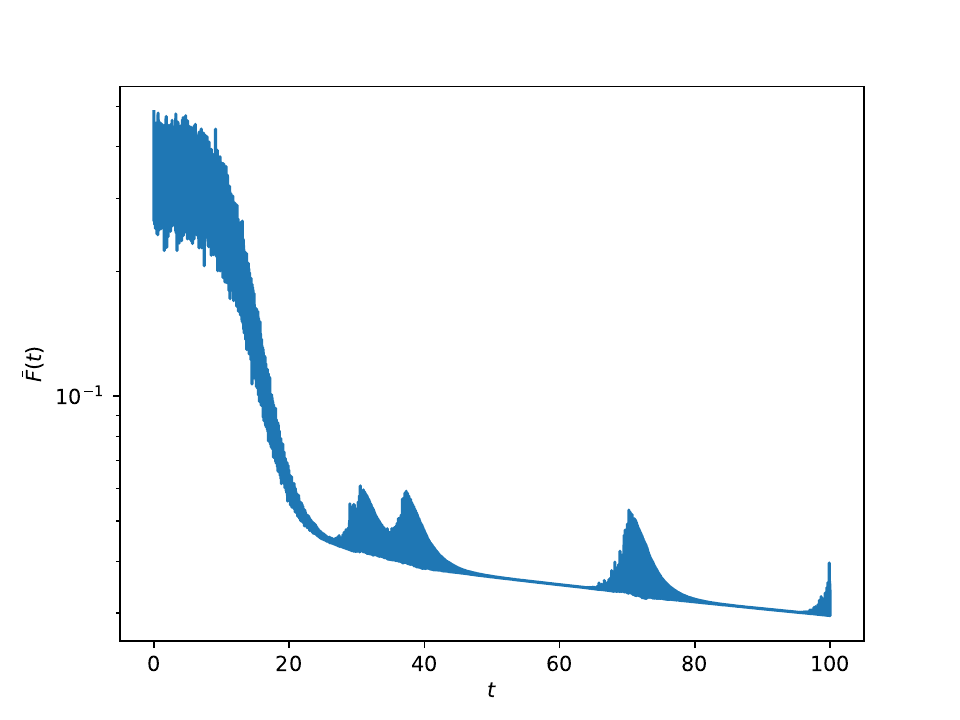}
    \includegraphics[width=0.3\linewidth]{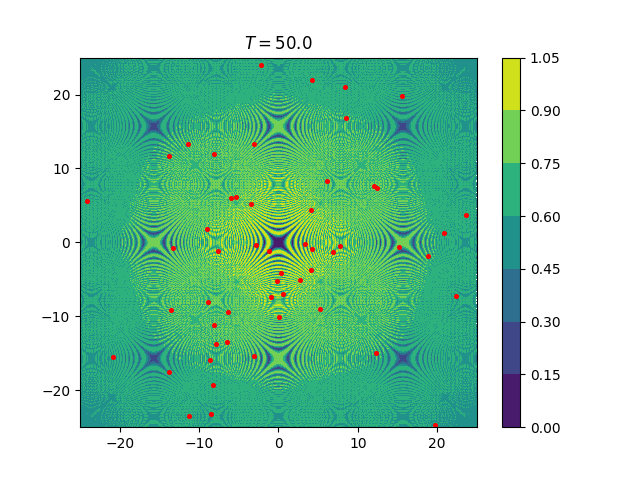}
    \includegraphics[width=0.3\linewidth]{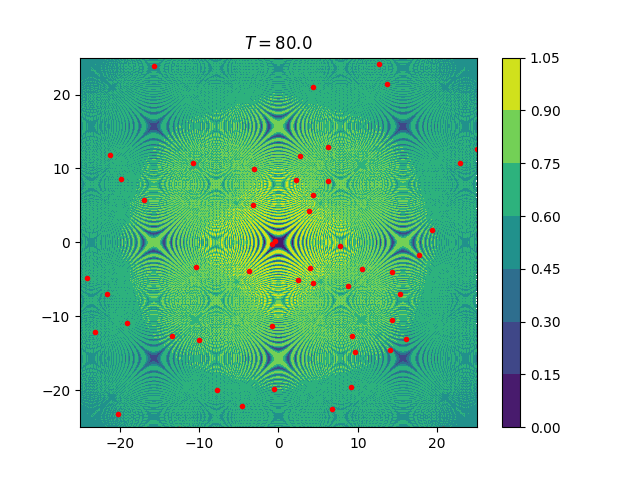}
    \caption{SSA applied to a Schaffer N.2 function: $\f(x,y) =  0.5 + \frac{\sin^{2}(x^2 - y^2) - 0.5}{[1 + 0.001(x^2 + y^2)]^2}$. The parameters used are $N=64$, $\alphalam=2$ and $\betamu=1/32$. The global minimum is $\f(x_\ast,y_\ast) = 0$ at $(x_\ast,y_\ast) = (0,0)$. Top Left: landscape of he function. Lower Left: long time behavior. Center and Right: swarm movement in time. This is also a challenging example since the global basin is very small.}
    \label{fig:Schaffer2D}
\end{figure}

\begin{figure}[ht]
    \centering
    \includegraphics[width=0.3\linewidth]{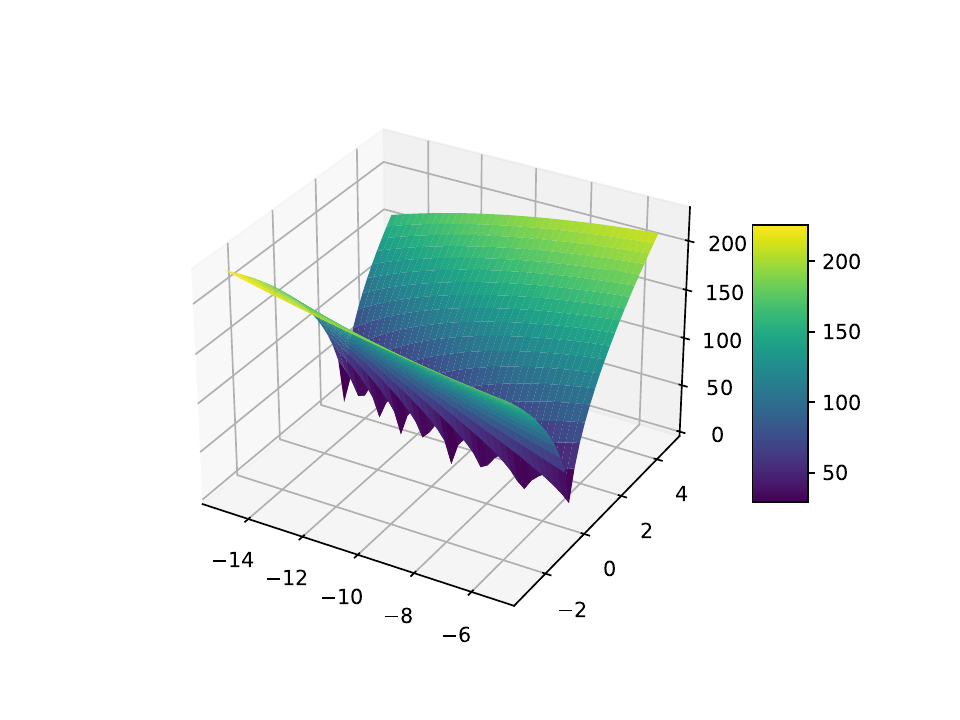}
    \includegraphics[width=0.3\linewidth]{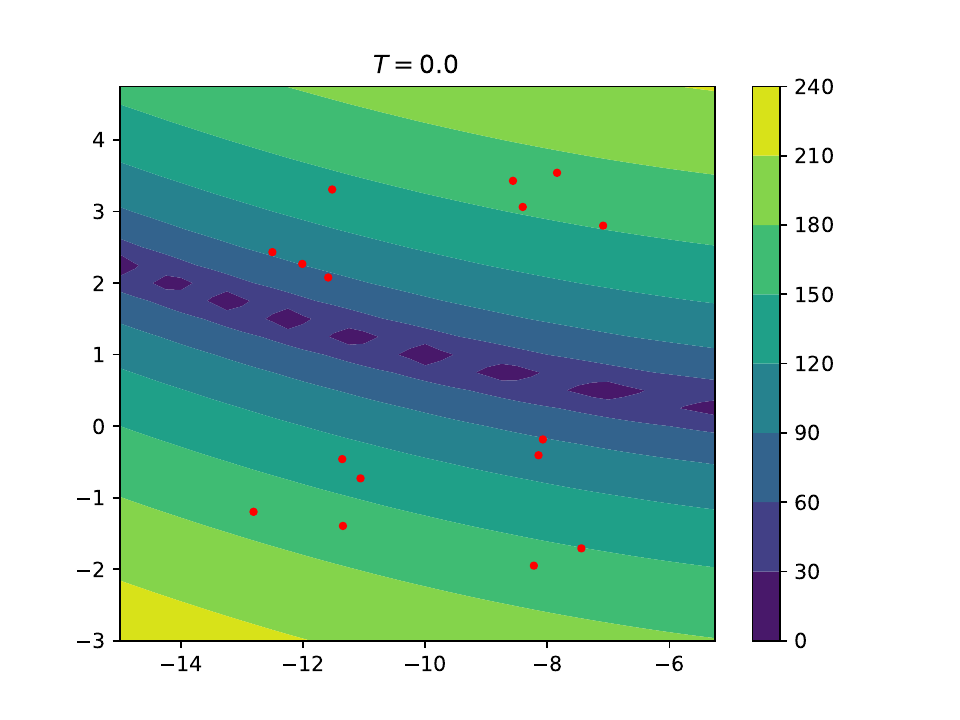}
    \includegraphics[width=0.3\linewidth]{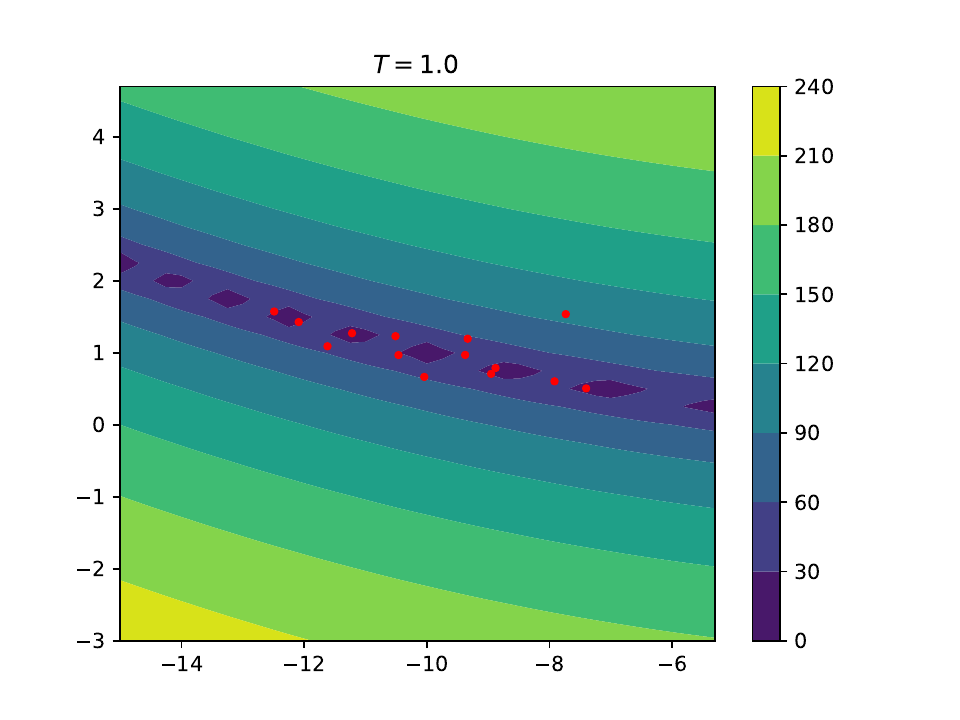}
        \includegraphics[width=0.3\linewidth]{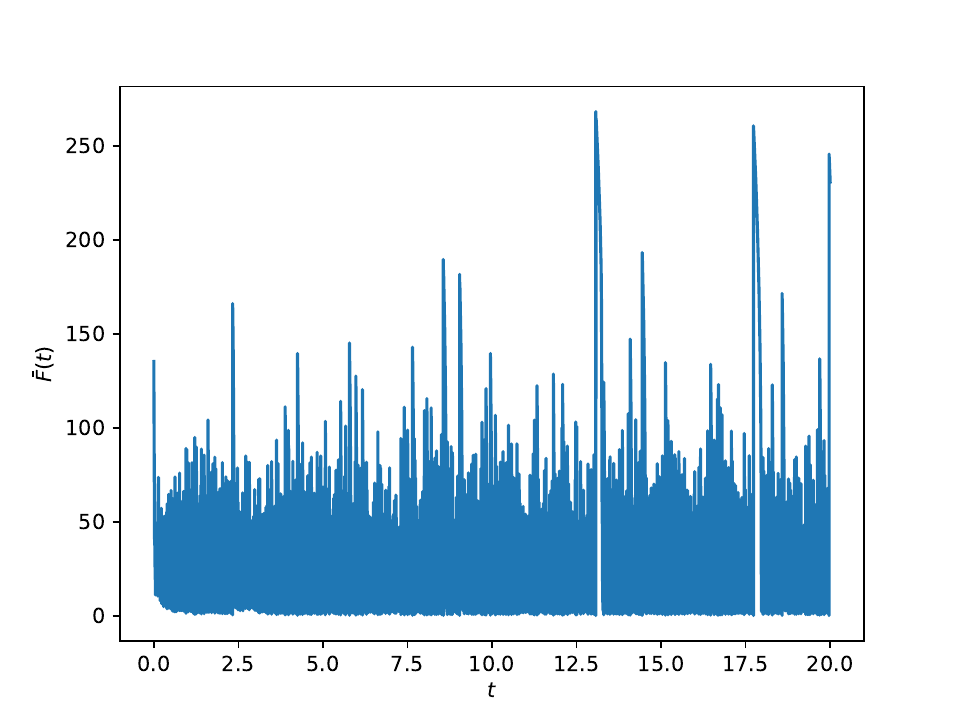}
    \includegraphics[width=0.3\linewidth]{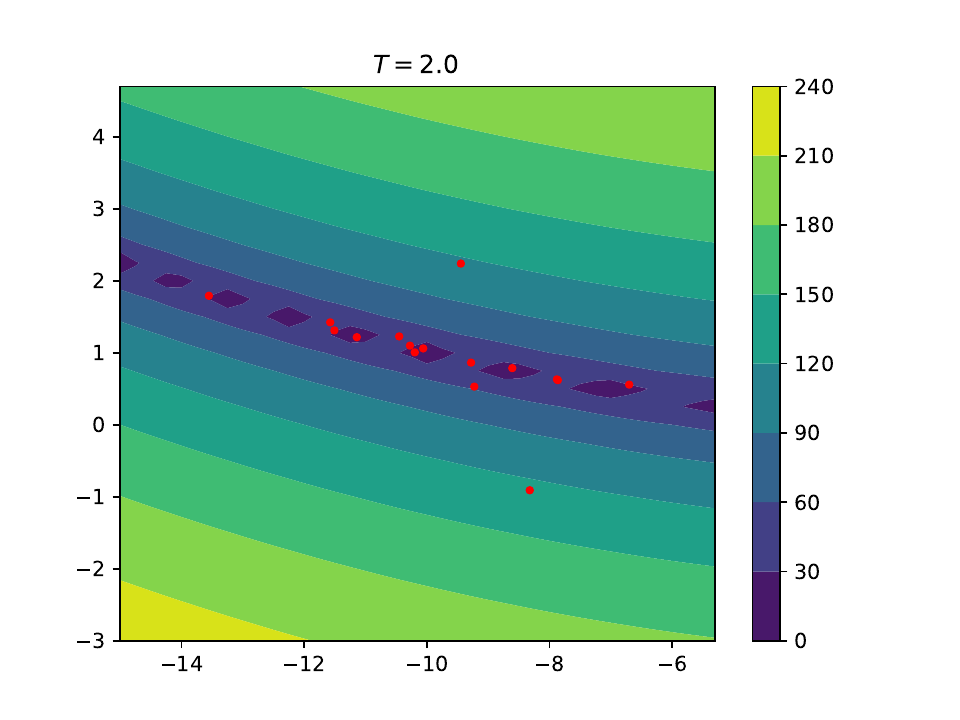}
    \includegraphics[width=0.3\linewidth]{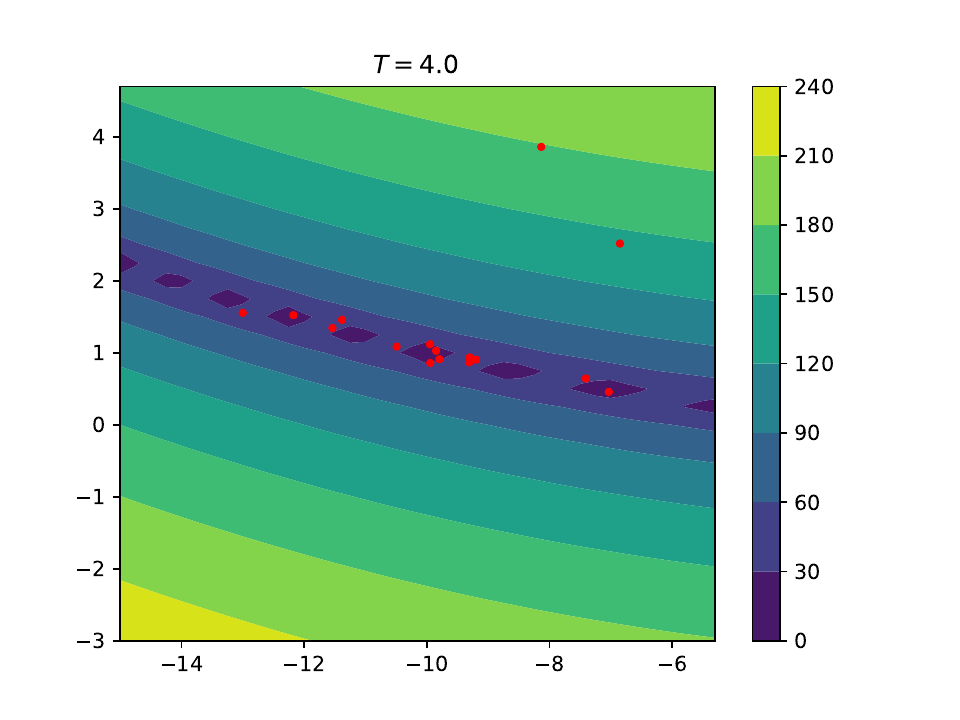}
    \caption{SSA applied to a Bukin N.6 function: $\f(x,y) =  100\sqrt{|y-0.01x^2|} + 0.01|x+10|$. The parameters used are $N=16$, $\alphalam=0.25$ and $\betamu=1/16$. The global minimum is $\f(x_\ast,y_\ast) = 0$ at $(x_\ast,y_\ast) = (-10,1)$. Top Left: landscape of he function. Lower Left: long time behavior. Center and Right: swarm movement in time. This is a very special example: it is observed that the particles can reach the global basin with ease but since all local and global basins are very shallow, samples with Brownian motion easily jump out. As a consequence, the provisional minimum oscillates very wildly.}
    \label{fig:BukinN62D}
\end{figure}

\begin{figure}[ht]
    \centering
    \includegraphics[width=0.3\linewidth]{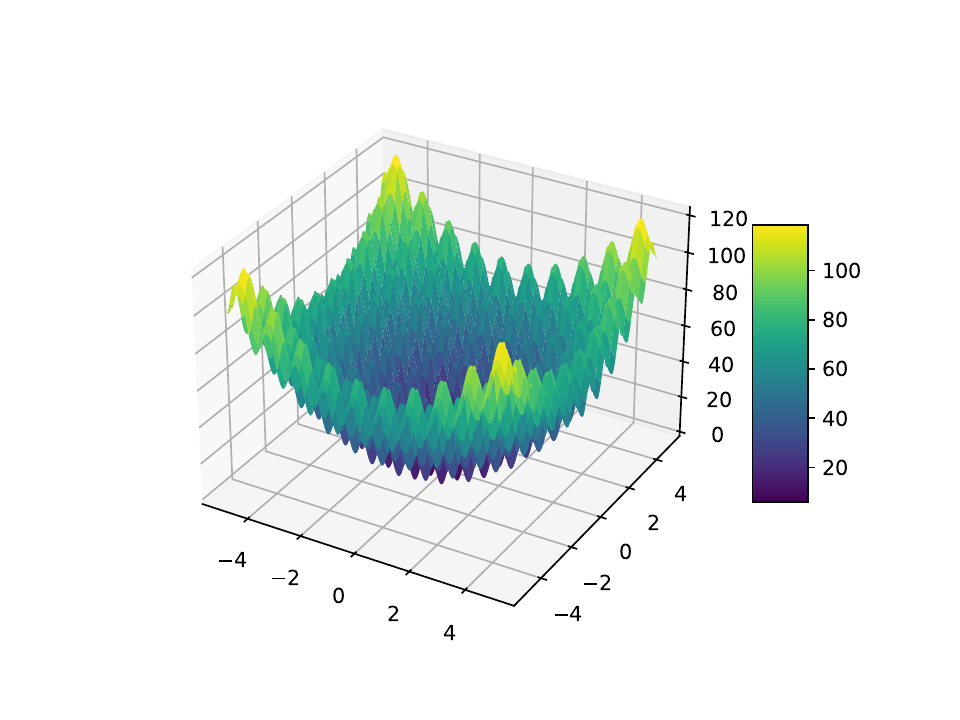}
    \includegraphics[width=0.3\linewidth]{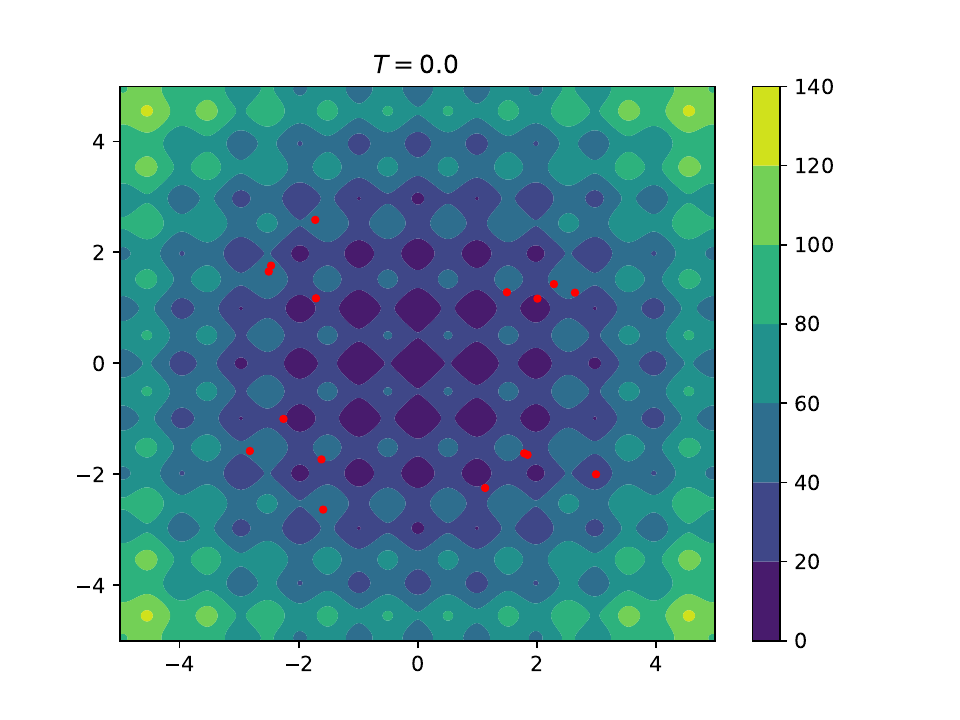}
    \includegraphics[width=0.3\linewidth]{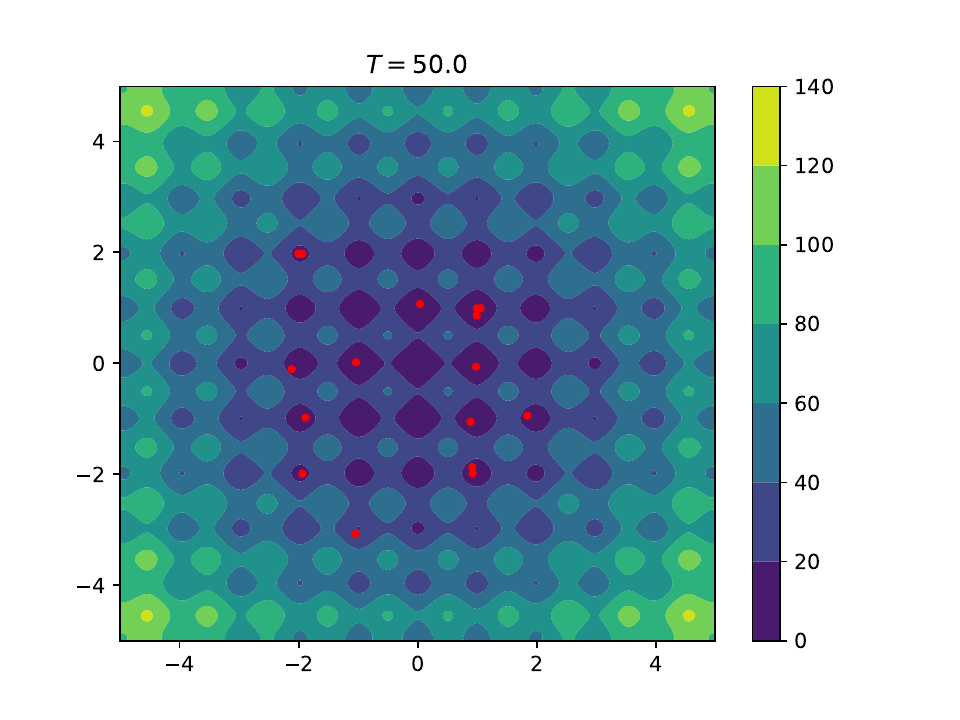}
        \includegraphics[width=0.3\linewidth]{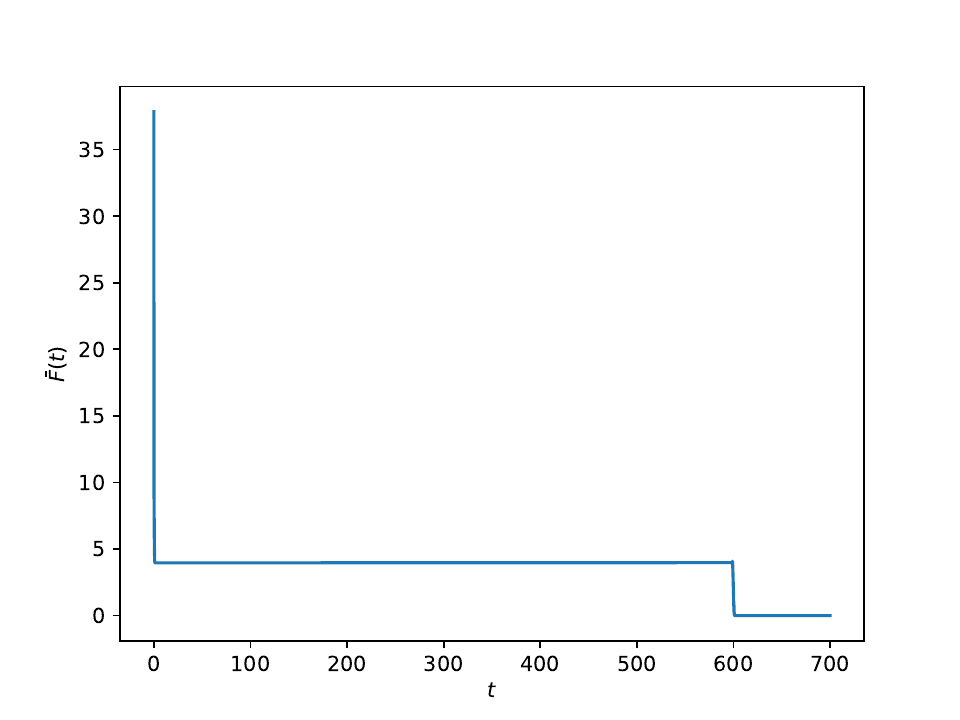}
    \includegraphics[width=0.3\linewidth]{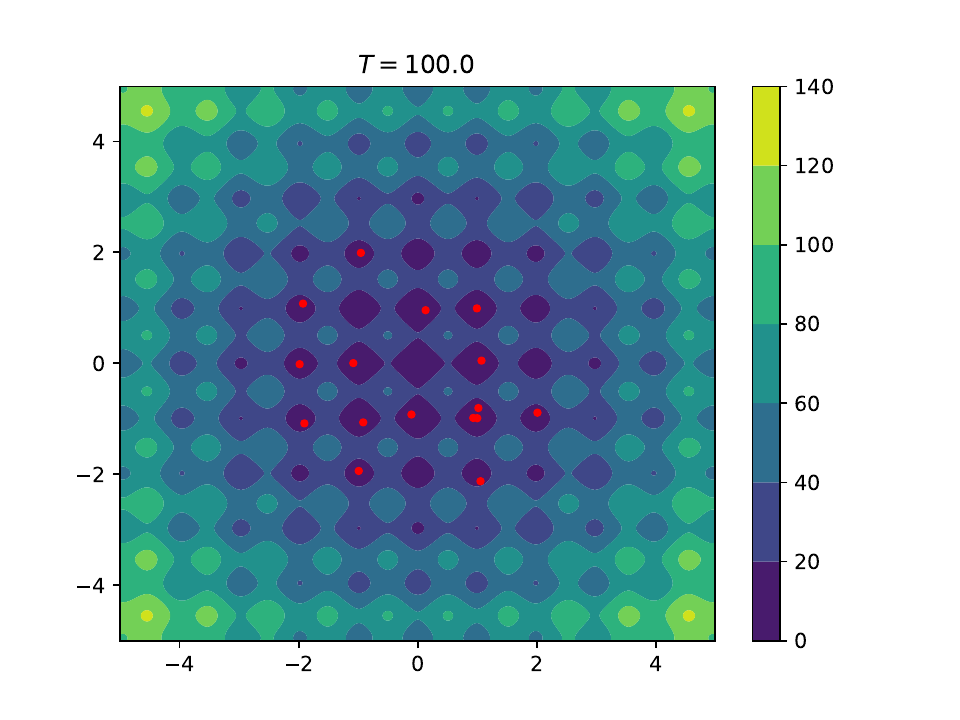}
    \includegraphics[width=0.3\linewidth]{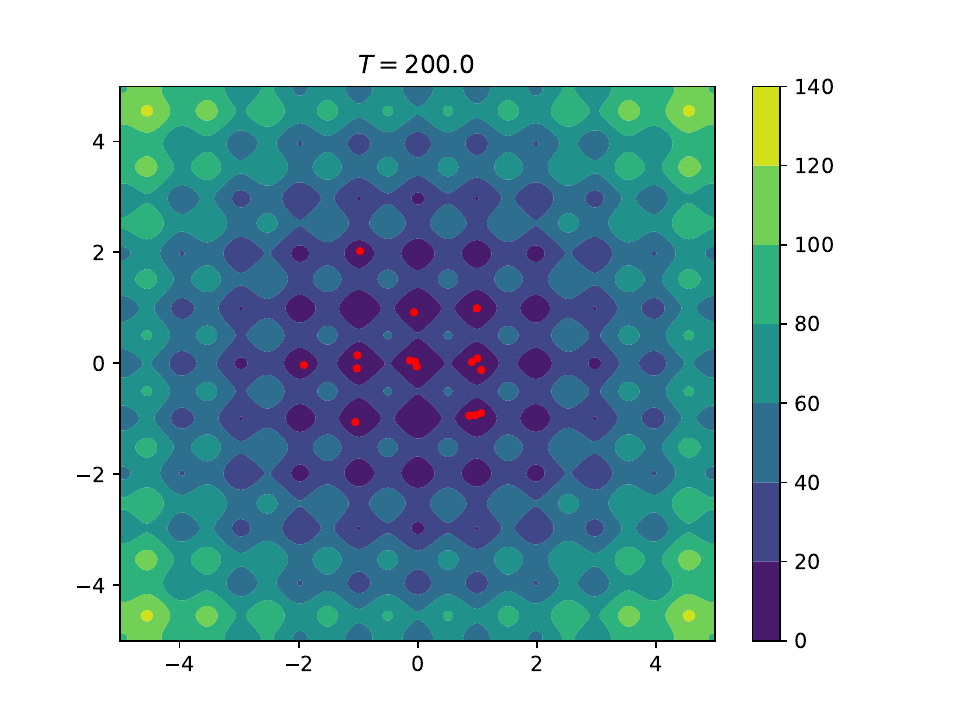}
    \caption{SSA applied to a Rastrigin function: $\f(x,y) =  20 + 2x^2 + 2y^2 - \cos(2\pi x) - \cos(2\pi y)$. The parameters used are $N=16$, $\alphalam=2$ and $\betamu=1/8$. The global minimum is $\f(x_\ast,y_\ast) = 0$ at $(x_\ast,y_\ast) = (0,0)$. Top Left: landscape of he function. Lower Left: long time behavior. Center and Right: swarm movement in time. Similar to the 1D Rastrigin case (see Figure~\ref{fig:long-time_rastrigin}), the provisional minimum quickly decreases to a plateau, and stays there for a long time frame before descending again. The major challenge in this example is that the local minimum has very similar value to the global minimum, and often, when found, trick the swarm to saturates.}
    \label{fig:Rastrigin2D}
\end{figure}

\end{document}